\documentclass[11pt, oneside]{amsart} 
\usepackage{mathtools, amsmath, amsthm, amssymb, calrsfs, wasysym, verbatim, bbm, color, graphics, geometry, enumerate, hyperref}

\usepackage{mathrsfs}
\usepackage{dsfont}
\usepackage{setspace}
\usepackage{graphicx}
\usepackage{mathtools}
\usepackage{array}
\usepackage{stmaryrd}
\usepackage{esvect}
\usepackage{tikz}
\usepackage{xcolor}
\usepackage{scalerel}
\usepackage{tikz-cd}
\usepackage{quiver}
\usepackage[T1]{fontenc}
\usepackage{cleveref}

\DeclareMathAlphabet{\mathcal}{OMS}{cmsy}{m}{n}

\newcommand{\R}{\mathbb{R}}
\newcommand{\C}{\mathbb{C}}
\newcommand{\Z}{\mathbb{Z}}

\newcommand{\Q}{\mathbb{Q}}
\newcommand{\G}{\mathbb{G}}

\def\R{\mathbb{R}}
\def\Z{\mathbb{Z}}
\def\Q{\mathbb{Q}}

\def\C{\mathbb{C}}

\def\F{\mathbb{F}}

\def\fld{\mathrm{f{\kern 1pt}ld\,}}
\def\sub{\subseteq}

\def\inv{^{-1}}

\def\mc{\mathcal}

\def\dashedrightarrow{\mathrel{\ThisStyle{\ooalign{$\SavedStyle\rightarrow$\cr%
    \hfil\textcolor{white}{\rule{2\LMpt}{1\LMex}}\kern2\LMpt\hfil}}}}

\def\Gal{\mathrm{Gal}}

\def\eps{\varepsilon}
\def\ol{\overline}
\def\GL{\mathrm{GL}}
\makeatletter
    \def\mmod#1{\allowbreak\mkern10mu({\operator@font mod}\,\,#1)}
\makeatother

\def\mf{\mathfrak}

\DeclareMathAlphabet{\mathdutchcal}{U}{dutchcal}{m}{n}
\SetMathAlphabet{\mathdutchcal}{bold}{U}{dutchcal}{b}{n}
\DeclareMathAlphabet{\mathdutchbcal}{U}{dutchcal}{b}{n}

\def\mf{\mathfrak}

\def\xto{\xrightarrow}
\def\Hom{\mathrm{Hom}}

\def\End{\mathrm{End}}
\def\sups{\supseteq}

\def\Spec{\operatorname{Spec}}

\def\gr{\operatorname{gr}}

\def\iso{\cong}

\def\dirlim{\varinjlim}

\def\onto{\twoheadrightarrow}
\def\into{\hookrightarrow}

\def\SL{\mathrm{SL}}
\def\GL{\mathrm{GL}}

\def\rank{\mathrm{rank}}
\def\O{\mc O}
\def\tr{\mathrm{tr}}

\def\Der{\mathrm{Der}}

\def\st{\,|\,}
\def\Biggst{\,\Bigg|\,}

\def\Bigst{\,\Big|\,}
\def\biggst{\,\bigg|\,}
\def\symb{\mathrm{symb}}
\def\SU{\mathrm{SU}}
\def\sc{_\mathrm{sc}}
\def\der{_\mathrm{der}}
\def\ad{_\mathrm{ad}}
\def\Lie{\mathrm{Lie}}
\def\aug{\epsilon}
\def\an{^\mathrm{an}}
\def\alg{\mathrm{alg}}
\def\WR{\mathrm{R}}

\newcommand{\genlegendre}[4]{%
  \genfrac{(}{)}{}{#1}{#3}{#4}%
  \if\relax\detokenize{#2}\relax\else_{\!#2}\fi
}

\theoremstyle{plain}
\newtheorem{thm}{Theorem}[section]
\newtheorem{lem}[thm]{Lemma}
\newtheorem{prop}[thm]{Proposition}
\newtheorem{cor}[thm]{Corollary}
\newtheorem{claim}[thm]{Claim}

\theoremstyle{definition}
\newtheorem{defn}[thm]{Definition}
\newtheorem{ex}[thm]{Example}
\newtheorem{cons}[thm]{Construction}

\theoremstyle{remark}
\newtheorem{rem}[thm]{Remark}

\newtheorem{note}[thm]{Note}
\newtheorem{notn}[thm]{Notation}

\hypersetup{
    pdftitle={On the saturability of p-adic Lie groups},
    pdfauthor={Casey Perdue}
}

\title{On the saturability of $p$-adic Lie groups}
\author{Casey Perdue}
\date{}

\numberwithin{equation}{section}

\begin{document}

\begin{abstract}
Lazard theory is becoming increasingly important in $p$-adic representation theory, especially when studying mod $p$ representations.  For $G$ a connected reductive group defined over a finite extension $K/\mathbb Q_p$, a question going back to Lazard is when a pro-$p$ Iwahori subgroup of $G(K)$ admits a saturated $p$-valuation.  In this article, we establish a deep connection to Moy-Prasad theory, and show that the Moy-Prasad filtration defines a saturated $p$-valuation on $G_{x,\frac{1}{p-1}+}$ for any point $x$ in the semisimple Bruhat-Tits building of $G$, given a mild assumption on the maximal torus of $G$ and that $p>2$.  We also compute Lazard Lie algebras defined over $\mathbb Z_p$ and the polynomial ring $\mathbb F_p[\varepsilon]$ associated to the saturated group $G_{x,\frac{1}{p-1}+}$, which allows us to reinterpret the Moy-Prasad isomorphism as an isomorphism of Lie algebras over $\F_p[\eps]$.
\end{abstract}
\maketitle
\tableofcontents


\section{Introduction}

\subsection{Lazard theory}

Since its introduction in Lazard's 1965 paper \cite{lazard65}, the notion of a \textit{$p$-valuation} on a pro-$p$ group and the stronger notion of a \textit{saturated} $p$-valuation have been incredibly useful tools for studying $p$-adic Lie groups.  For example, Lazard proved in \cite{lazard65} that a topological group is a $p$-adic Lie group if and only if it contains an open subgroup which is a pro-$p$ group admitting a saturated $p$-valuation.  A more modern treatment of these same ideas (which is also in English) is presented in Part B of the recent text \cite{schneiderbook}, where this fact is essentially given as \cite[Theorem 27.1, Theorem 29.8]{schneiderbook}; a very closely related fact can be found in \cite[Theorem 8.18]{DDMS}.  

Later, Lubotzky and Mann introduced and studied the related notion of a \textit{powerful} pro-$p$ group.  In the excellent text \cite{DDMS}, its authors study \textit{uniformly powerful} pro-$p$ groups, also called \textit{uniform} pro-$p$ groups.  Many of the results Lazard found for saturated pro-$p$ groups are easier to prove for uniform pro-$p$ groups, with the proofs involving less analytic machinery, and stronger theorems are often true.  It is noted in \cite[p. 81]{DDMS} that uniform pro-$p$ groups automatically admit a saturated $p$-valuation, i.e., are \textit{saturable}, however they state that it is unclear whether or not all saturable groups are uniform pro-$p$ groups.  While this claim is false, this uncertainty and the ease of working with uniform pro-$p$ groups led to the study of $p$-valuations going out of favor for a period of time.  In fact, it is eventually noted in \cite[Proposition A.1]{HKN}, attributed to Weigel, that being a uniform pro-$p$ group is equivalent to admitting an \textit{equi-$p$-valuation}.

In recent decades, the study of saturable pro-$p$ groups has made a comeback, especially in representation theory.  The basis for much of this work originates from \cite[3.2.7]{lazard65}, where Lazard proves that the standard pro-$p$ Iwahori subgroup $I$ of $\GL_n(\mc O_K)$ and $\SL_n(\mc O_K)$ is saturable as long as $p-1>en$, where $K/\Q_p$ is a finite extension and $e=e(K/\Q_p)$ is its absolute ramification degree.  Lazard also notes that this bound on $p$ is sharp: there are examples with $p-1=en$ for which $I$ admits no $p$-valuation, let alone a saturated one.  In \cite{klopsch}, Klopsch points out the misconception present in \cite{DDMS}, and definitely proves that there are saturable groups which are not uniform by demonstrating that these pro-$p$ Iwahori subgroups are essentially \textit{never} uniform ($n\geq 2$ is sufficient to ensure this), despite them being saturable when $p$ is sufficiently large.

In \cite[Theorem 12.1]{totaro}, Totaro takes Lazard's argument further and shows that every pro-$p$ subgroup of $G(K)$ for $G$ a semisimple algebraic group defined over $K$ admits a $p$-valuation when $p>>0$ (with an explicit lower bound for $p$), however he does not prove that his $p$-valuation is saturated on a full pro-$p$ Iwahori subgroup.  Having a saturated $p$-valuation is much more useful than merely having a $p$-valuation; for example, naturally associated to a saturated pro-$p$ group is a Lie $\Z_p$-algebra (see \cite[Section 31]{schneiderbook}).  More geometrically, \cite[Proposition 4.1]{LS} shows that there is a natural rigid analytic group over $\Q_p$ associated to any saturated pro-$p$ group, which is latter promoted to a dagger group over $\Q_p$ in \cite[Proposition 5.4]{LSS}.

The issue of saturation is addressed in a slightly different context in \cite[Proposition 3.5]{LS}.  Concretely, they show the following.
\begin{thm}[Lahiri-Sorensen] \label{LS-split}
Let $G$ be a split connected reductive $K$-group for $K/\Q_p$ a finite extension, $h$ its Coxeter number, and $I\sub G(K)$ a pro-$p$ Iwahori subgroup.  If $p-1>eh$, then there exists a saturated $p$-valuation $\omega$ on $I$.
\end{thm}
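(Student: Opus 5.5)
Since $G$ is split, I will fix a split maximal torus $T$ and a Chevalley model over $\mc O_K$, so that $I$ is generated (as a product in any fixed order) by $T(1+\mf p_K)$ together with the root subgroups $U_\alpha(\mc O_K)$ for positive roots $\alpha$ and $U_\alpha(\mf p_K)$ for negative roots $\alpha$. Write $v$ for the valuation on $K$, normalized by $v(p)=1$, so that $v(\varpi)=1/e$. Following Lazard's treatment of $\GL_n$, I will define $\omega$ via a rational point of the apartment, namely the barycenter of the alcove, or equivalently via the height of roots. For a root $\alpha$ of height $\mathrm{ht}(\alpha)$ I set
\[
\omega(u_\alpha(x)) = v(x) + \frac{\mathrm{ht}(\alpha)}{eh},
\]
which is positive on $I$, since negative roots have height at least $-(h-1)$ and the entries there lie in $\mf p_K$. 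On the torus part, $\omega(t)=\min_i v(\chi_i(t)-1)$ for a basis of cocharacters. I then extend $\omega$ to all of $I$ as the minimum over the factors in an Iwahori factorization $I = U^-_{\mf p}\, T_1\, U^+$. Equivalently, $\omega$ is the Moy--Prasad-type filtration $I_r$ with jumps in $\frac{1}{eh}\Z$, and $\omega(g)=\sup\{r : g\in I_r\}$.

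The key steps, in order, are the following.
\begin{enumerate}[(1)]
\item Verify that $\{I_r\}$ is a decreasing filtration by normal subgroups with $[I_r,I_s]\sub I_{r+s}$. This follows from the Chevalley commutator formula, because the heights add, $\mathrm{ht}(i\alpha+j\beta)=i\,\mathrm{ht}\alpha+j\,\mathrm{ht}\beta$, and the structure constants are integral.
\item Show that $\omega(g^p)=\omega(g)+1$. Let $m$ be the leading level of $g$ in $\gr I$. One computes $g^p$ modulo higher filtration. On each root group, $u_\alpha(x)^p=u_\alpha(px)$ raises the level by exactly $1$. For products, the Hall--Petrescu formula shows that the commutator correction terms lie in levels at least $pm$. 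The hypothesis $p-1>eh$ is exactly what is needed here. The smallest positive level is $\frac{1}{eh}$, so $m\ge \frac{1}{eh}>\frac{1}{p-1}$, which gives $pm>m+1$, and the correction is therefore negligible. The same inequality gives the $p$-valuation axiom $\omega>\frac1{p-1}$.
\item Check saturation. By the criterion in \cite{schneiderbook}, it suffices to show that $g\in I$ with $\omega(g)>\frac{p}{p-1}$ is a $p$-th power. Using that $\gr I$ is a free $\F_p[\pi]$-module ($\pi$ the $p$-power map) of rank $\dim G\cdot[K:\Q_p]$, I will build a $p$-th root successively. If $g$ has leading level $m>\frac{p}{p-1}$, then $m-1>\frac1{p-1}$ is still a level of $I$, since levels are unbounded upwards in $\frac{1}{eh}\Z$ and every element of $\mf p_K^k$ at level $m$ is $p$ times an element at level $m-1$. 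So the leading term of $g$ is $\pi$ of something. I then correct and pass to the limit using completeness.
\end{enumerate}

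The main obstacle is step (2) for general products, i.e.\ proving $\omega(g^p)=\omega(g)+1$ exactly and not merely $\ge$, uniformly across the Iwahori factorization. The first approach I would try is to reduce to $\GL_n$ through a faithful representation compatible with heights, namely the adjoint representation. There the group $I_r$ embeds into $1+$ (matrices with entry valuations bounded by $r$ shifted by heights), whose $p$-valuation properties are Lazard's computation. Representations of $G$ with weights of height span at most $h$ keep the bound $p-1>eh$. The difficulty is that the adjoint representation only controls $G\ad$, so the center requires a separate torus argument.
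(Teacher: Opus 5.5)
Steps (1) and (3) are fine in outline. For (1), cite Moy--Prasad (e.g.\ \cite[Proposition 13.2.5]{KP}) rather than only the Chevalley commutator formula. That formula says nothing about $U_\alpha$ against $U_{-\alpha}$, nor about why factored products at level $\geq r$ form a subgroup. Step (3) is the paper's factorwise saturation argument (Lemma~\ref{OKsaturation}, Corollary~\ref{torus-saturation}, Remark~\ref{compatible-saturation}), phrased via surjectivity of $\pi$ on $\gr I$.

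The genuine gap is step (2), which you flag yourself, and the justification you give there is circular. In the Hall--Petrescu identity the correction factors are $c_i^{\binom{p}{i}}$ with $c_i\in\gamma_i(H)\sub I_{im}$, where $H$ is generated by the Iwahori factors of $g$. Only $c_p$ is automatically at level $\geq pm$. For $2\leq i\leq p-1$ you only know level $\geq im$. Pushing $c_i^{\binom{p}{i}}=(c_i^{\binom{p}{i}/p})^p$ beyond level $m+1$ requires $\omega(c^p)\geq\omega(c)+1$ for an arbitrary non-factor element $c$, and that is the axiom being proved.

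This is repairable. Fix $N$ and prove ``$c\in I_n\Rightarrow c^p\in I_{\min(n+1,N)}$'' by descending induction over the finitely many levels in $\frac1{eh}\Z\cap(0,N]$. The inductive hypothesis applies because $c_i^{\binom{p}{i}/p}\in I_{in}$ with $in\geq 2n>n$ for $2\leq i\leq p-1$, while $c_p\in I_{pn}$ with $pn>n+1$. Then let $N\to\infty$. The same bookkeeping puts all corrections in $I_{\min(2n+1,pn)}\sub I_{(n+1)+}$, and compatibility on the factors gives $\omega(g^p)=\omega(g)+1$ exactly. With that induction your route would be a legitimate intrinsic alternative to the paper's.

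Your $\GL_n$ fallback, as stated, does not close the gap. An embedding $\rho$ only yields $I_r\sub\{g: w(\rho(g)-1)\geq r\}$, i.e.\ a $p$-valuation $w'\geq\omega$ on $I$. To transfer the axioms to $\omega$ you need the reverse inclusion: an element of $I$ whose matrix is congruent to $1$ at level $r$ must have all its Iwahori coordinates at level $r$.

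This is exactly the input the paper supplies, following \cite{LS} (see Remark~\ref{splitcase}):
\begin{enumerate}[{(i)}]
\item Conjugate by $t=\mu(c)\in T(E)$, with $\mu=n\rho^\vee/(eh)\in X_*(T)$ for a suitable $n\in\Z^+$ and $v(c)=1/n$ over a suitably ramified $E/K$.
\item This lands in the congruence subgroup $K_r$, $r>e_E/(p-1)$, of the Chevalley group scheme $\mc G\sub\GL_N$ over $\O_E$, where $v(g-1)$ is a $p$-valuation (Lemma~\ref{congruence-pval}).
\item Since $K_n=(G_E)_{x_0,n/e_E}$ has an Iwahori factorization, the conjugated congruence filtration coincides with your min-over-factors $\omega$ (Lemma~\ref{conjlemma}, Proposition~\ref{existence}).
\end{enumerate}

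Two further corrections. First, the adjoint representation is not injective on $I$ when $Z(G)$ has positive dimension (e.g.\ the scalars in $\GL_n$), so you need a genuinely faithful $\mc G\into\GL_N$. Second, the condition on the height span of weights is a red herring; the adjoint representation itself has span $2h-2$. A weight-shifted filtration on $\End(V)$ is just conjugation by a fractional cocharacter, and it is multiplicative for every representation. The bound $p-1>eh$ enters only through $\min\omega=\frac1{eh}>\frac1{p-1}$.
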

While this addresses the saturation question, it requires that the reductive group $G$ be split.  To prove this, they construct a particular $p$-valuation, prove that its restriction to each term in the Iwahori factorization
\begin{equation}\label{Iwa-fac}
\prod_{\alpha\in \Phi^-}U_{\alpha,0+}\times T_{0+}\times \prod_{\alpha\in \Phi^+} U_{\alpha,0}\xrightarrow{\sim} I
\end{equation}
is saturated, and prove that this is sufficient to conclude the saturation on $I$.  It is noted in \cite[Remark 3.5]{dottolehung} that this $p$-valuation arises from a particular Moy-Prasad filtration, which we discuss in more depth below in Subsection \ref{MP-filts}.

Many recent applications of the $p$-valuation machinery to mod $p$ representation theory arise from \cite{claus-HH}.  Let now $k$ be a field of characteristic $p$.  Given a $p$-valued pro-$p$ group $G$, let $\mf g$ be its Lazard Lie $\F_p$-algebra, which is finite dimensional if $G$ is of finite rank (see \cite[Section 25]{schneiderbook} for details).  Then \cite[Theorem 5.5]{claus-HH} constructs a multiplicative spectral sequence
\begin{equation}\label{ss}
E_1^{s,t}=H^{s,t}(\mathfrak g,k)\implies H^{s+t}(G,k),
\end{equation}
where the terms on the $E_1$ page consist of graded Lie algebra cohomology, and which is converging to the continuous group cohomology of $G$.  (The result in \cite{claus-HH} applies to more general coefficients; here, we only discuss the trivial representation $k$ for simplicity.)  In combination with Theorem \ref{LS-split}, which provides $p$-valuations in certain cases for pro-$p$ groups of interest to representation theorists, this spectral sequence has been used in many recent preprints and papers, including \cite{kongsgaard}, \cite{koziol-pepin}, and \cite{dottolehung}.

\subsection{Moy-Prasad filtrations} \label{MP-filts}

Let $G$ be a connected reductive group defined over a $p$-adic field $K$ (i.e., a finite extension $K/\Q_p$), $S\leq G$ a maximal $K$-split subtorus, $\Phi=\Phi( G, S)$ its root system.  We will always assume throughout the paper that every $p$-adic field is endowed with a valuation $v$ normalized by $v(p)=1$.  Let $x\in \mathcal A(  S)$ be any fixed point in the apartment associated to $S$ in the (semisimple) Bruhat-Tits building $\mathcal B(  G)$.  Then associated to any concave function $f:\Phi\cup\{0\}\to \mathbb R$ is a compact open subgroup $G_{x,f}\leq G(K)$.  (For definitions of all of these terms, see the recent text \cite{KP}.  However, we caution the reader that \cite{KP} often assumes that the valuation on $K$ satisfies $v(K)=\Z$, which is not the case for us, so many of our references will have implicit renormalizations.)  These were originally considered by Bruhat and Tits in the case when $f(0)=0$ in their papers \cite{BT1} and \cite{BT2}, and were later generalized beyond this requirement by Moy and Prasad in \cite{MP1} and \cite{MP2}.  An important special case which is allowed once we remove the requirement that $f(0)=0$ is when $f\geq 0$ is constant, i.e., there is some $r\geq 0$ such that $f=f_r\equiv r$ always outputs $r$.  In this case, we write $G_{x,r}\coloneqq G_{x,f_r}$, and the resulting filtration on $G_{x,0}$ is called the \textit{Moy-Prasad filtration}, with the $G_{x,r}$ being called the \textit{Moy-Prasad filtration groups}.  The construction of the Moy-Prasad filtration is later slightly altered by Yu in \cite{yu-smooth_models} via the introduction of the \textit{minimal congruent filtration}, after which more desirable properties can be proven when the maximal torus $  T$ of $  G$ is not \textit{weakly induced}.

The Moy-Prasad filtration associated to a point $x\in \mathcal B(  G)$ has become an incredibly important tool for the study of the representation theory of $G(K)$, especially in the case when one is considering complex representations.  For example, in \cite{MP2}, Moy and Prasad introduce the notion of the \textit{depth} of a complex representation, defined using Moy-Prasad filtrations.  Later, \cite{adler} and \cite{yu-tame} critically use this notion to construct many supercuspidal representations of $G(K)$, which was shown in \cite{fintzen} to produce all supercuspidal representations, so long as the group splits over a tame extension and $p$ does not divide the order of the Weyl group.

\vspace{0.1in}

\subsection{Main results}

\subsubsection{\texorpdfstring{$p$}{p}-valuations on Moy-Prasad filtration groups}

We assume for technical reasons for the remainder of the paper that $p>2$ is an odd prime.  As in the previous subsection, let $G$ be a connected reductive group defined over our finite extension $K/\Q_p$, $T\leq G$ a maximally $K$-split maximal $K$-torus, and fix any point $x\in \mc B(G)$ in its (semisimple) Bruhat-Tits building.  Write $\O_K$ for the ring of integers of $K$ and $e_K$ for its absolute ramification degree (with analogous notation for any $p$-adic field).  To fix notation, for $r\geq 0$, write $G_{x,r+}=\bigcup_{s>r}G_{x,s}$, and define 
$$\omega_x(g)\coloneqq \inf\{r> 0\st g\in G_{x,r}\}$$
for $g\in G_{x,0+}$.  (The jumps of the Moy-Prasad filtration occur on a discrete set, so one can actually replace this infimum by a minimum.)

As noted earlier, \cite[Remark 3.5]{dottolehung} points out that the $p$-valuation defined on the pro-$p$ Iwahori subgroup $I$ by Lahiri-Sorensen in the split case in Theorem \ref{LS-split} is actually equal to $\omega_x$ for a particular $x\in \mathcal B(G)$ (notably, for $x$ the barycenter of the dominant base chamber), where now $G_{x,0+}=I$.  This poses an immediate natural question: do saturated $p$-valuations on $G_{x,0+}$ arise in this way for other points $x\in \mc B(G)$?  Unfortunately, one can easily find counterexamples to this question; a requirement in the definition of a $p$-valuation would need that $\omega_x(g)>\frac1{p-1}$ for all $g\in G_{x,0+}$, which would force $G_{x,0+}=G_{x,\frac1{p-1}+}$ (which is false for easy examples).  However, if one is willing to restrict their view to the smaller group $G_{x,\frac1{p-1}+}$, we show that the answer is `yes'.  Moreover, we are able to drop the requirement that $G$ is split, so long as we add a technical condition to the torus $T$ (which is very often satisfied), and prove the following in Theorem \ref{general-existence}:
\begin{thm} \label{intro-existence}
Let $G$ be a connected reductive $K$-group, and assume that $T$ is weakly induced.  Then for any $x\in \mathcal B( G)$, $\omega_x$ is a saturated $p$-valuation on $G_{x,\frac1{p-1}+}$.
\end{thm}

We note that $T$ is weakly induced if $G$ splits over a tame extension (this condition on $T$ is also satisfied if $G$ is quasisplit, semisimple, and either adjoint or simply connected), and it turns out that the condition $p-1>eh$ present in Theorem \ref{LS-split} is ultimately merely there to ensure that $G_{x,0+}=G_{x,\frac1{p-1}+}$ for $x$ the barycenter of the dominant base chamber, making this a strict generalization of the result in \cite{LS}.

We also note for posterity that one could prove abstractly that there exists some saturated $p$-valuation on $G_{x,0+}$ if $p>>0$ using a combination of Lahiri and Sorensen's Theorem \ref{LS-split} and \cite[Theorem A]{GS-K}, however this has several drawbacks: it is not especially conducive to performing computations with, it provides no information about the case when $p$ is not quite large, and it would provide us with no information about the Moy-Prasad filtrations themselves.

One natural question arises from Theorem \ref{intro-existence}: when does this result construct a saturated $p$-valuation on the full pro-$p$ Iwahori?  In other words, when is there some point $x$ in a chamber such that $G_{x,0+}=G_{x,\frac1{p-1}+}$?  As remarked above, if $p-1>eh$ and $G$ is split, then this condition is satisfied by the barycenter of the dominant base chamber.

Let now $S\leq T$ be a maximal $K$-split subtorus and $x_0\in \mc A(S)$ be an absolutely special vertex associated to a Chevalley-Steinberg system in the apartment associated to $S$, so every $x\in \mc A(S)$ can be written as $x=x_0+v$ for a unique $v\in V(S\ad)=X_*(S\ad)\otimes_\Z\R$.  If $G$ is quasisplit, we answer the above question in Corollary \ref{polytope} as the following, where $I$ denotes the pro-$p$ Iwahori subgroup associated to the dominant base chamber:
\begin{thm} \label{intro-polytope}
If $G$ is quasisplit, $T$ is weakly induced, and $T(K)_{0+}=T(K)_{\frac1{p-1}+}$, then there is an open convex polytope $U$ of the dominant base chamber $C\sub \mc A(S)$ given by
$$U=\left\{x\in \mc A(S)\Biggst x=x_0+v, \quad \frac1{p-1}<\langle \alpha, v\rangle<\frac1{m_\alpha}\left(\frac1{e_{K_{\tilde\alpha}}}-\frac1{p-1}\right)\quad \forall \alpha\in \Phi^{+,\mathrm{nd}}\right\}$$
such that $G_{x,0+}=G_{x,\frac1{p-1}+}$ for each point $x\in U$. In other words, we get a distinct saturated $p$-valuation $\omega_x$ on $I$ for each $x\in U$, which is described by the Moy-Prasad filtration at the point $x$.  This subset $U$ is nonempty precisely when 
$$p-1>(m_\alpha \mathrm{ht}(\alpha)+1)\cdot e_{K_{\tilde\alpha}}$$
for all $\alpha\in \Phi^{+,\mathrm{nd}}$.  
\end{thm}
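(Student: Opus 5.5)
Since $G_{x,0+}\supseteq G_{x,\frac1{p-1}+}$ always holds, it suffices to show the reverse inclusion for $x\in U$. Once that is done, Theorem \ref{intro-existence} shows that $\omega_x$ is a saturated $p$-valuation on $G_{x,0+}$. For $x$ in the open chamber $C$ this group is the pro-$p$ Iwahori $I$, so each $x\in U$ gives a saturated $p$-valuation on $I$. These valuations are distinct because the jumps of the filtrations $\omega_x$ depend on the numbers $\langle\alpha,v\rangle$.

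For the inclusion I use the Iwahori-type factorization of Moy--Prasad groups in the quasisplit case:
$$G_{x,r}=\prod_{\alpha\in\Phi^{-,\mathrm{nd}}}U_{\alpha,x,r}\times T(K)_r\times\prod_{\alpha\in\Phi^{+,\mathrm{nd}}}U_{\alpha,x,r},$$
valid for every $r>0$ (with the root-group parts for $2\alpha$ absorbed into $U_\alpha$ in the nonreduced case). This reduces the problem to checking the equality factor by factor. The torus factor is exactly the hypothesis $T(K)_{0+}=T(K)_{\frac1{p-1}+}$.

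For a root $\alpha$, I parametrize $U_\alpha$ through the Chevalley--Steinberg system by elements $u\in K_{\tilde\alpha}$ (or pairs, in the non-reduced case). The filtration is $U_{\alpha,x,r}=\{u_\alpha(u) : v(u)+\langle\alpha,v\rangle\ge r\}$, possibly with a factor such as $m_\alpha$ or $\frac12$ depending on normalization. The valuations of $u$ lie in $\frac1{e_{K_{\tilde\alpha}}}\Z$, so the jumps of $U_{\alpha,x,\bullet}$ lie in the set $\langle\alpha,v\rangle+\frac1{m_\alpha e_{K_{\tilde\alpha}}}\Z$; I will check the exact scaling against the definition of $m_\alpha$. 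The condition $U_{\alpha,x,0+}=U_{\alpha,x,\frac1{p-1}+}$ says that no jump lies in $(0,\frac1{p-1}]$. Since $x\in C$, the number $\langle\alpha,v\rangle$ lies strictly between $0$ and the smallest positive step, so the smallest positive jump for $\alpha>0$ is $\langle\alpha,v\rangle$. For $-\alpha$, the smallest positive jump is the step size minus $\langle\alpha,v\rangle$. Requiring both to exceed $\frac1{p-1}$ gives exactly the two inequalities defining $U$. Because the inequalities are imposed for every positive nondivisible root, the argument covers all root factors and not just the simple roots. The set $U$ is then an intersection of finitely many open half-spaces, hence an open convex polytope, and it lies inside $C$.

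For nonemptiness, write $v$ in terms of its values $t_i=\langle\alpha_i,v\rangle$ on the simple roots, so that $\langle\alpha,v\rangle=\sum n_it_i$. The lower bounds are tightest at the simple roots, where they read $t_i>\frac1{p-1}$. The upper bounds are tightest at the roots of large height. Taking all $t_i=\frac1{p-1}+\epsilon$ with $\epsilon>0$ small, the condition for $\alpha$ becomes $\mathrm{ht}(\alpha)/(p-1)<\frac1{m_\alpha}(\frac1{e_{K_{\tilde\alpha}}}-\frac1{p-1})$. Rearranging gives $p-1>(m_\alpha\mathrm{ht}(\alpha)+1)e_{K_{\tilde\alpha}}$. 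Conversely, any $x\in U$ has $\langle\alpha,v\rangle>\mathrm{ht}(\alpha)/(p-1)$, so the same inequality is necessary.

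The main obstacle is the bookkeeping in the relative, non-split setting. I have to determine precisely the jump set of $U_{\alpha,x,\bullet}$, including the normalization of $m_\alpha$, the field $K_{\tilde\alpha}$, and non-reduced $BC$ roots, where $U_{2\alpha}$ contributes jumps with half the spacing. I also have to confirm that the factorization holds with the constant function $r$ for every $r>0$. For the non-reduced case I would first handle the $\SU_3$ rank-one computation explicitly and then reduce to it.
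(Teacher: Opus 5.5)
Your skeleton matches the paper's. You reduce via the Iwahori factorization to the torus factor (the hypothesis) and to the root groups $U_{\pm\alpha,x,\bullet}$ for $\alpha\in\Phi^{+,\mathrm{nd}}$, and you treat convexity and nonemptiness as linear algebra. Your necessity argument for nonemptiness is correct and slightly more direct than the paper's shrinking argument: every $x\in U$ has $\langle\alpha,v\rangle=\sum_i n_it_i>\mathrm{ht}(\alpha)/(p-1)$. The type R1 case also works as you describe.

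\textbf{The gap.} It is in the type R2 roots, the one non-formal step, and there your heuristic gives the wrong bound. Write $e=e_{K_{\tilde\alpha}}$ and $a=\langle\alpha,v\rangle$. The jumps of $U_{\alpha,x,\bullet}$ do not lie in $a+\frac1{2e}\Z$. By Remark \ref{R2-desc},
$$U_{\pm\alpha,x,r+}=\{u_{\pm\alpha}(s,t)\st v(s)\pm a>r,\ v(t)\pm 2a>r\},\qquad v(s),v(t)\in\tfrac1e\Z.$$
So the $t$-coordinate (the $U_{\pm2\alpha}$-part) is indexed by $v(t)\pm 2a$: its jumps are translated by $2a$, not by $a$. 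With your rule ``step size minus $\langle\alpha,v\rangle$'', the condition for $-\alpha$ becomes $\frac1{2e}-a>\frac1{p-1}$, i.e.\ $a<\frac1{2e}-\frac1{p-1}$. This is strictly stronger than the bound $a<\frac12\bigl(\frac1e-\frac1{p-1}\bigr)$ that defines $U$.

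Hence your claim that this ``gives exactly the two inequalities defining $U$'' is false when $m_\alpha=2$. Carried out as written, your argument would not cover all of $U$. It would also predict the nonemptiness criterion $p-1>m_\alpha(\mathrm{ht}(\alpha)+1)e$ rather than the stated one.

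\textbf{The fix.} Treat the two coordinates separately, as in Lemma \ref{qpslit-zeroplus-roots}, using $0<a<\frac1{2e}$ from \cite[Fact 6.4.5]{KP}.

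For $-\alpha$: if $u_{-\alpha}(s,t)\in U_{-\alpha,x,0+}$, then $v(s)>a>0$ and $v(t)>2a>0$. By discreteness, $v(s),v(t)\ge\frac1e$. So its Moy--Prasad index is at least $\min\{\frac1e-a,\frac1e-2a\}=\frac1e-2a$. The binding constraint therefore comes from the $U_{-2\alpha}$-part, namely $\frac1e-2a>\frac1{p-1}$. This is exactly where the factor $\frac1{m_\alpha}$ in front of $\frac1{p-1}$ in the definition of $U$ comes from.

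For $+\alpha$: similarly $v(s),v(t)\ge 0$, so the index is at least $\min\{a,2a\}=a$. The resulting condition $a>\frac1{p-1}$ agrees with yours.
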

Here, $m_\alpha=1$ if $\alpha$ is of type R1, and is 2 if $\alpha$ is of type R2.  (See Subsection \ref{qsplit-review} for what this means, as well as precise definitions of $\Phi^{\mathrm{nd}}$ and $e_{K_{\tilde\alpha}}$.)  In comparison, by \cite[Fact 6.4.5]{KP}, the dominant base chamber $C$ itself can be described in this manner as
$$C=\left\{x\in \mc A(S)\Biggst x=x_0+v,\quad 0<\langle \alpha, v\rangle <\frac1{m_\alpha e_{K_{\tilde\alpha}}}\quad \forall \alpha\in \Phi^{+,\mathrm{nd}}\right\}.$$
If $G$ is both quasisplit and semisimple, then we are able to use ideas from \cite{totaro} to remove some of the hypotheses from Theorem \ref{intro-polytope}.  In particular, we prove the following in Corollary \ref{qsplit-ss}:
\begin{thm} 
Suppose that $G$ is a connected quasisplit semisimple $K$-group, and assume that 
$$p-1>\big(m_\alpha\mathrm{ht}(\alpha)+1\big)\cdot e_{K_{\tilde\alpha}}$$
for all $\alpha\in \Phi^{+,\mathrm{nd}}$.  Then there is an open convex nonempty polytope $U$ of $C\sub \mc A(S)$ given by 
$$U=\left\{x\in \mc A(S)\Biggst x=x_0+v, \quad \frac1{p-1}<\langle \alpha, v\rangle<\frac1{m_\alpha}\left(\frac1{e_{K_{\tilde\alpha}}}-\frac1{p-1}\right)\quad \forall \alpha\in \Phi^{+,\mathrm{nd}}\right\}$$
such that each point $x\in U$ corresponds to a distinct saturated $p$-valuation $\omega_x$ on $I$ given by the Moy-Prasad filtration at the point $x$.
\end{thm}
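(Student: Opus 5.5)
The plan is to deduce the statement from Theorem \ref{intro-polytope}, whose conclusion is exactly what we want. So the work is to check that its three hypotheses hold automatically for $G$ quasisplit and semisimple under the bound on $p$. Quasisplitness is assumed. The remaining hypotheses are that $T$ is weakly induced and that $T(K)_{0+}=T(K)_{\frac1{p-1}+}$. Nonemptiness of $U$ then follows from the nonemptiness criterion in Theorem \ref{intro-polytope}, since that criterion is precisely the hypothesis $p-1>(m_\alpha\mathrm{ht}(\alpha)+1)e_{K_{\tilde\alpha}}$.

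The difficulty is that a general quasisplit semisimple group need be neither simply connected nor adjoint, so $T$ need not be weakly induced. To handle this, I would follow \cite{totaro} and pass to the simply connected cover $\pi\colon G\sc\to G$. For $G\sc$ quasisplit, the maximal torus $T\sc$ is induced (a product of Weil restrictions of $\mathbb G_m$ or norm-one-type pieces coming from coroots), so Theorem \ref{intro-polytope} applies to $G\sc$ once the torus condition is checked. For $T\sc$, that condition reduces to coroot tori $R_{K_{\tilde\alpha}/K}\mathbb G_m$ (or the R2 variant), whose filtrations are governed by valuations on $K_{\tilde\alpha}$. Hence $T\sc(K)_{0+}=T\sc(K)_{\frac1{p-1}+}$ exactly when $1/e_{K_{\tilde\alpha}}>\frac1{p-1}$, i.e.\ $p-1>e_{K_{\tilde\alpha}}$, which the hypothesis implies since $\mathrm{ht}(\alpha)\ge1$. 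This gives a saturated $p$-valuation $\omega_x$ on $G\sc{}_{,x,0+}=G\sc{}_{,x,\frac1{p-1}+}$ for every $x\in U$; the building and the polytope $U$ are the same for $G$ and $G\sc$.

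Next I would transfer the result to $G$. Since $p>2$ and the bound forces $p$ not to divide $|\ker\pi|$, which divides the order of the fundamental group (bounded by $h$), $\pi$ should induce an isomorphism on pro-$p$ Moy-Prasad groups $G\sc{}_{,x,r}\to G_{x,r}$ for $r>0$. The root subgroups $U_{\alpha,x,r}$ map isomorphically. On tori, the map $T\sc(K)_{r}\to T(K)_{r}$ should be bijective for $r>0$ because the kernel and cokernel of $T\sc\to T$ are finite of order prime to $p$, so they vanish on pro-$p$ parts. Combining this with the Iwahori factorization (\ref{Iwa-fac}), $\pi$ is a filtration-preserving isomorphism $G\sc{}_{,x,0+}\iso G_{x,0+}$, so $\omega_x$ on $G$ is transported from $G\sc$ and is a saturated $p$-valuation on $I$. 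This transfer step is the main obstacle. The delicate point is the torus: $T(K)_{0+}$ is defined through the minimal congruent filtration (or the connected Néron model), so we need surjectivity of $T\sc(K)_{r}\to T(K)_{r}$ and not just injectivity. I would first try to prove it using the fact that $\ker\pi$ is étale of order prime to $p$, which makes $p$-power roots, and hence the pro-$p$ filtration pieces, lift uniquely. If that fails, I would instead verify $T(K)_{0+}=T(K)_{\frac1{p-1}+}$ directly via Totaro's bounds and apply the main theorem to $G$ using only the weak-inducedness of $T\sc$.

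Finally, distinctness of the $\omega_x$ for different $x\in U$ follows as in Theorem \ref{intro-polytope}. The value $\omega_x$ on root subgroup elements records $\langle\alpha,v\rangle$ plus affine offsets, and these determine $v$.
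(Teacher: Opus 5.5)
Your architecture is the paper's: pass to $G\sc$, where $T\sc\iso\prod_{\alpha\in\Delta}R_{K_{\tilde\alpha}/K}\mathbb G_m$ satisfies $T\sc(K)_{0+}=T\sc(K)_{\frac1{p-1}+}$; use Totaro's bound to get $p\nmid|\ker\pi|$; identify the root groups; and transport along $I\sc\iso I$. But the torus step you flag as the main obstacle is left unproved, and the argument you sketch cannot prove it.

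That $T\sc(K)\to T(K)$ has finite prime-to-$p$ kernel and cokernel is a statement about abstract groups. It shows that every pro-$p$ subgroup of $T(K)$, in particular $T(K)_r$, lifts uniquely to a pro-$p$ subgroup of $T\sc(K)$. It is blind to the index $r$, however, and cannot identify that lift with $T\sc(K)_r$. Uniqueness of $p$-th roots only relates $T(K)_r$ to $T(K)_{r+1}$, not to the jumps in between. Even at level $0+$ you would also need $\pi(T\sc(K)_{0+})\subseteq T(K)_{0+}$, where the right side is Yu's minimal congruent filtration; without it you do not even have $I\sc\iso I$ via the Iwahori factorization. Without $\pi(T\sc(K)_r)=T(K)_r$ for all $r>0$, you only know that $I$ carries \emph{some} saturated $p$-valuation transported from $G\sc$. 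You do not know that it is $\omega_x$, defined by the Moy--Prasad filtration of $G$ at $x$.

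The missing input is a valuation computation on characters, which is what the paper supplies. Let $n$ be the order of $X^*(T\sc)/X^*(T)$; it is prime to $p$ because $\ker\pi$ is. Suppose $t\in T(K)^{\mathrm{std}}_r$ has lift $s\in T\sc(K)_{0+}$. For every $\chi_0\in X^*(T\sc)$ we have $n\chi_0\in X^*(T)$, so $v(\chi_0(s)^n-1)\geq r$. Since $v((1+y)^n-1)=v(y)$ for $v(y)>0$, this gives $v(\chi_0(s)-1)\geq r$, hence $T(K)^{\mathrm{std}}_r\subseteq\pi(T\sc(K)_r)$. The paper closes the loop with $\pi(T\sc(K)_r)\subseteq T(K)^{\mathrm{mc}}_r\subseteq T(K)^{\mathrm{std}}_r$. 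These inclusions come from functoriality and admissibility of the minimal congruent filtration, which equals the standard one on the induced torus $T\sc$.

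Your fallback fails as written, since Theorem \ref{intro-polytope} needs $T$, not $T\sc$, to be weakly induced. It can be repaired with the same trick, and this gives a shorter proof:
\begin{enumerate}
\item Because $G$ is semisimple, $\Theta$ acts on $X^*(T)$ through its permutation action on $\tilde\Delta$. So $T$ splits over the compositum of the $K_{\tilde\alpha}$ ($\tilde\alpha\in\tilde\Delta$), which is tame since $e(K_{\tilde\alpha}/K)\le e_{K_{\tilde\alpha}}<p-1$. Hence $T$ is weakly induced.
\item The characters $n\lambda_{\tilde\alpha}\in X^*(T)$, with $\lambda_{\tilde\alpha}$ the fundamental weights, take values in $K_{\tilde\alpha}$ on $T(K)$. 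So $v\bigl((n\lambda_{\tilde\alpha})(t)-1\bigr)>0$ forces it to be at least $1/e_{K_{\tilde\alpha}}>\frac1{p-1}$.
\item The same $n$-th power argument then gives $T(K)_{0+}=T(K)_{\frac1{p-1}+}$, and Theorem \ref{intro-polytope} applies to $G$ directly.
\end{enumerate}
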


\subsubsection{The Lazard Lie algebras}

Via Lazard theory \cite{lazard65}, there are several Lie algebras naturally associated to a saturated pro-$p$ group.  Notably, if $(I,\omega)$ is a saturated pro-$p$ group, we have a filtered Lie $\Z_p$-algebra $L_\omega(I)_{\frac1{p-1}+}$ and filtration-preserving, mutually inverse logarithm and exponent maps
\[\begin{tikzcd}
	{L_\omega(I)_{\frac1{p-1}+}} & I
	\arrow["\exp", shift left=2, from=1-1, to=1-2]
	\arrow["\log", shift left=2, from=1-2, to=1-1]
\end{tikzcd}\]
by \cite[Section 31]{schneiderbook}.  We also have a Lie $\F_p[\eps]$-algebra 
$$\gr I=\bigoplus_{r>\frac1{p-1}}I_r/I_{r+}.$$
(Here, $\eps$ is merely viewed as a formal variable, making $\F_p[\eps]$ just a free polynomial algebra in one variable.)

Two recent applications of Lahiri and Sorensen's Theorem \ref{LS-split} have relied deeply on explicit descriptions of $\gr I$ when $I$ is a pro-$p$ Iwahori subgroup, notably those in \cite{dottolehung} and \cite{koziol-bharatha-bryan}.  Both of these papers rely on intense ad-hoc computations to produce expressions for these Lie algebras.  (A similar computation of $L_\omega(I)_{\frac1{p-1}+}$ for $I$ the pro-$p$ Iwahori of $\GL_2(K)$ was performed in \cite[Subsection 5.3]{BHH+}.)  One point which has become clear from these applications is that it is of great importance that we better understand these Lie algebras in order to apply Lazard theory to the mod $p$ representation theory of $p$-adic reductive groups, and applications of our Theorem \ref{intro-existence} will likely require this information.  By working more conceptually, we are able to avoid the heavy computations present in \cite{dottolehung} and \cite{koziol-bharatha-bryan}.  

Given any $x\in \mc B(G)$, there is an associated filtered Lie $\O_K$-algebra $\mf g_{x,0}$ defined using integral models of $G$ (see \cite[Section 13.3]{KP}).  We then have the following description of the Lie $\Z_p$-algebra of $G_{x,\frac1{p-1}+}$, proved in Theorem \ref{general-LieAlg-iso} below:
\begin{thm}
There is a natural isomorphism of filtered Lie $\Z_p$-algebras
$$L_\omega(G_{x,\frac1{p-1}+})_{\frac1{p-1}+}\iso \mf g_{x,\frac1{p-1}+}.$$
\end{thm}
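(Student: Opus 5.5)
We construct the isomorphism by comparing two logarithms: Lazard's logarithm on the saturated group $G_{x,\frac1{p-1}+}$ and a logarithm defined directly on the Moy-Prasad groups, first in the case $G=\GL_n$ and then in general. Recall the recipe from \cite[Section 31]{schneiderbook} for a saturated group $(I,\omega)$ with $I=G_{x,\frac1{p-1}+}$ and $\omega=\omega_x$. The Lie algebra $L_\omega(I)$ is $I$ itself, with addition $g+h=\lim_n (g^{p^n}h^{p^n})^{p^{-n}}$ and bracket given by the analogous commutator limit. This is well defined because saturation (Theorem \ref{intro-existence}) makes $p$-th roots unique and filtration compatible. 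The filtration is $L_\omega(I)_r=I_r=G_{x,r}$. The plan is to produce a $\Z_p$-linear, bracket-preserving bijection $\Lambda:L_\omega(I)\to\mf g_{x,\frac1{p-1}+}$ with $\Lambda(G_{x,r})=\mf g_{x,r}$ for all $r>\frac1{p-1}$.

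Step 1 is to reduce to an ambient matrix setting. Choose a faithful representation $\rho:G\into\GL(V)$ compatible with Moy-Prasad filtrations, in the sense that $G_{x,r}=G(K)\cap\GL(V)_{\rho(x),r}$ and $\mf g_{x,r}=\mf g\cap\mf{gl}(V)_{\rho(x),r}$. For $\GL_n$, a point $\rho(x)$ corresponds to a periodic lattice chain or norm on $V$. There $\GL_{\rho(x),r}=1+\mf{gl}_{\rho(x),r}$, with $\mf{gl}_{\rho(x),r}$ an $\O_K$-order filtration with $\mf{gl}_{r}\mf{gl}_{s}\sub\mf{gl}_{r+s}$. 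For $r>\frac1{p-1}$ the series $\log(1+X)$ and $\exp(X)$ converge and give mutually inverse bijections $\GL_{\rho(x),r}\leftrightarrow\mf{gl}_{\rho(x),r}$. The usual estimates $v(X^k/k)\ge kr-v(k)$ and $v(X^k/k!)\ge kr-\frac{k-1}{p-1}$ show these maps are isometric. The Baker-Campbell-Hausdorff formula then shows that Lazard's sum and bracket correspond to $+$ and $[\,,]$ under $\log$, using $\log(g^{p^n})=p^n\log g$. So in the $\GL_n$ case, $\Lambda=\log$ is the desired isomorphism.

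Step 2 is to descend to $G$. We must check that $\log$ maps $G_{x,r}$ onto $\mf g_{x,r}$ for $r>\frac1{p-1}$. The inclusion $\log(G(K)\cap\GL_{r})\sub\mf g\cap\mf{gl}_r$ holds because $\log$ of an element of the $K$-analytic group $G(K)$ near $1$ lies in $\Lie G$; this is the analytic logarithm, a closed subgroup statement. Surjectivity follows symmetrically from $\exp(\mf g\cap\mf{gl}_r)\sub G(K)$. Both use that $G$ is Zariski closed in $\GL(V)$, together with convergence on these domains. Naturality means independence of $\rho$. It follows because Lazard's operations are intrinsic to the group and $\log$ commutes with the maps $\GL(V)\to\GL(V')$ induced by morphisms of representations.

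The main obstacle is Step 1's compatibility hypothesis. An embedding with $G_{x,r}=G(K)\cap\GL_{\rho(x),r}$ holds for tamely ramified $G$ and in many other cases, but not in general. Under the weakly induced torus hypothesis, I would instead try to avoid a global embedding and work piece by piece with the factorization $G_{x,r}=T(K)_r\prod_\alpha U_{\alpha,x,r}$, the analogue of \eqref{Iwa-fac}. On each root subgroup and on $T(K)_r$, I would define $\Lambda$ directly through the root-group parametrizations and the torus logarithm, where saturation of each factor was presumably established in proving Theorem \ref{general-existence}. The work is then to show these pieces glue additively. This should follow from the fact that Lazard's addition on an ordered product of saturated subgroups agrees with the sum of the components modulo higher filtration, combined with completeness and an induction on filtration jumps; that successive-approximation step is where I expect the real difficulty.
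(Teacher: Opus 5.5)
\textbf{There is a genuine gap.} Your Steps 1--2 work only if there is a faithful representation $\rho$ with $G_{x,r}=G(K)\cap\GL(V)_{\rho(x),r}$ and $\mf g_{x,r}=\mf g\cap\mf{gl}(V)_{\rho(x),r}$ for all $r>\frac1{p-1}$. You do not establish this, and you say yourself that it is not available in the generality of the statement. So the proof rests on your fallback, which is incomplete and aimed at the wrong difficulty.

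Additivity is not the issue. Because $\omega_x$ is compatible with the Iwahori factorization, ordered bases of the factors concatenate to an ordered basis of $G_{x,\frac1{p-1}+}$. The description of $\tilde\omega$ in terms of an ordered basis \cite[Proposition 31.2]{schneiderbook} then gives at once $L_\omega(G_{x,\frac1{p-1}+})_r=\bigoplus_{\alpha\in\Phi^{\mathrm{nd}}}\log(U_{\alpha,x,r})\oplus\log(T(K)_r)$. Hence a factorwise-defined $\Lambda$ is automatically a $\Z_p$-linear, filtration-respecting bijection, and no successive approximation is needed. What your plan never addresses is the bracket between different factors, e.g.\ $[\log u_\alpha(s),\log u_{-\alpha}(t)]$, which has a torus component. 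Parametrizations of the individual root groups and of $T$ give no control over it. Moreover, the factorization $T(K)_r\prod_\alpha U_{\alpha,x,r}$ and the explicit maps $u_\alpha$ exist only for quasisplit $G$. In general the middle factor is $Z(K)_r$, so you also need a reduction step that you do not mention.

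The missing idea is not to build $\Lambda$ by hand, but to compare both sides inside one ambient Lie algebra where the brackets are already known to agree. The chain of identifications is:
\begin{itemize}
\item Lazard's $L_\omega(G_{x,\frac1{p-1}+})$ is canonically the Lie algebra of the $\Q_p$-analytic group $G_{x,\frac1{p-1}+}$.
\item That Lie algebra is $\Lie(G(K))$, since the subgroup is open.
\item $\Lie(G(K))\cong\Lie(G)(K)$ via the explicit comparison of analytic and algebraic Lie algebras (Proposition~\ref{explicit-iso}), and $\mf g_{x,r}$ is a lattice in $\Lie(G)(K)$.
\end{itemize}
Every map here is a Lie algebra map. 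So the theorem reduces to the equality of the two lattices $\log(G_{x,r})$ and $\mf g_{x,r}$ in $\Lie(G)(K)$. Since both lattices decompose along the Iwahori factors, this reduces further to explicit checks for $\G_a$ and $\G_m$:
\begin{itemize}
\item R1 roots, after removing the Weil restriction;
\item R2 roots, via the decomposition into three copies of $\G_a$ with a traceless $\gamma$ from Proposition~\ref{compatibility};
\item the torus, via $\G_m$ and Galois descent from a splitting field when the torus is weakly induced.
\end{itemize}
The non-quasisplit case then follows by passing to an unramified $L/K$ with $G_L$ quasisplit, checking that the resulting isomorphism is $\Gal(L/K)$-equivariant, and taking invariants. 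This uses $G_{x,r}=(G_L)_{x,r}^{\Gal(L/K)}$ and the analogous statement for $\mf g_{x,r}$.
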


To compute the Lie $\F_p[\eps]$-algebra, we reinterpret the Moy-Prasad isomorphism as an isomorphism of Lie $\F_p[\eps]$-algebras.  The kernel of this idea is present in the proof of \cite[Lemma 3.7]{dottolehung}, which still relies on a manual computation (that becomes completely untenable once one is no longer working in the split case). Corollary \ref{MP-cor} below provides the following, where these Moy-Prasad isomorphisms exist by \cite[Theorem 13.5.1]{KP}:
\begin{thm}
The Moy-Prasad isomorphisms
$$\mathrm{MP}_{x,r,r+}: G_{x,r}/G_{x,r+}\xto{\sim} \mf g_{x,r}/\mf g_{x,r+}$$
for $r>\frac1{p-1}$ fit together to form an isomorphism of Lie $\F_p[\eps]$-algebras
\begin{equation}\mathrm{MP}: \gr G_{x,\frac1{p-1}+}\xto{\sim} \gr \mf g_{x,\frac1{p-1}+}.\end{equation}
\end{thm}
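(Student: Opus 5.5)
The plan is to check the three pieces of structure on $\gr G_{x,\frac1{p-1}+}$ separately: the additive structure, the Lie bracket, and the action of $\eps$. Each must be carried over by the graded pieces of the Moy-Prasad isomorphisms, and in each case the problem reduces to a statement about the filtration $\omega_x$ near the identity.

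Recall the structure on the group side, as in \cite[Section 23--25]{schneiderbook}. Addition on $G_{x,r}/G_{x,r+}$ is induced by the group multiplication, which is commutative modulo $G_{x,r+}$. The bracket
\[
G_{x,r}/G_{x,r+}\times G_{x,s}/G_{x,s+}\to G_{x,r+s}/G_{x,(r+s)+}
\]
is induced by the commutator $(g,h)\mapsto ghg^{-1}h^{-1}$. The variable $\eps$ acts by the $p$-power map $G_{x,r}/G_{x,r+}\to G_{x,r+1}/G_{x,(r+1)+}$, which is well defined because $\omega_x$ is a $p$-valuation by Theorem \ref{general-existence}. On $\gr\mf g_{x,\frac1{p-1}+}$ the corresponding operations are addition, the Lie bracket of $\mf g$, and multiplication by $p$. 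Additivity of each $\mathrm{MP}_{x,r,r+}$ is part of \cite[Theorem 13.5.1]{KP}, so only the bracket and the $\eps$-action remain.

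To compare these I will use the Lazard isomorphism just established in Theorem \ref{general-LieAlg-iso}. This identifies $L_\omega(G_{x,\frac1{p-1}+})_{\frac1{p-1}+}$ with $\mf g_{x,\frac1{p-1}+}$ as filtered Lie $\Z_p$-algebras via $\log$. For a saturated group, $\gr$ of the group is canonically isomorphic to $\gr$ of its Lazard Lie algebra as Lie $\F_p[\eps]$-algebras. The reason is that $\log$ satisfies $\log(g^p)=p\log g$ and preserves filtrations, and the Baker--Campbell--Hausdorff formula gives
\[
\log(ghg^{-1}h^{-1})\equiv[\log g,\log h]
\]
modulo higher filtration. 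Hence $\gr(\log)$ is an isomorphism of Lie $\F_p[\eps]$-algebras
\[
\gr G_{x,\frac1{p-1}+}\xto{\sim}\gr\mf g_{x,\frac1{p-1}+}.
\]
It then suffices to show that $\gr(\log)$ agrees with $\mathrm{MP}$ on each graded piece, i.e.\ that for $g\in G_{x,r}$ with $r>\frac1{p-1}$ we have $\log g\equiv \mathrm{MP}(g)$ modulo $\mf g_{x,r+}$.

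Proving this agreement is the main obstacle. My first attempt would use the construction in the proof of Theorem \ref{general-LieAlg-iso}, where $\log$ is presumably realized concretely. After passing to a tame splitting extension and embedding $G\into\GL_n$, it should become the matrix logarithm $\log(1+X)=X-X^2/2+\cdots$. The Moy-Prasad map in that setting is $1+X\mapsto X$. The terms $X^k/k$ with $k\ge2$ have filtration at least $kr-v(k)$, which exceeds $r$ precisely because $r>\frac1{p-1}$. I would verify this generator by generator: on root subgroups $U_{\alpha,r}$, where both maps are linearized by the same pinning, and on $T(K)_r$, where weak inducedness lets us compare with the logarithm of an induced torus. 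Since both maps are additive modulo $r+$, agreement on generators suffices. If the matrix-embedding route is unavailable, I would fall back on this direct generator check alone, using the Iwahori-type factorization of $G_{x,r}$ into root subgroups and the torus.
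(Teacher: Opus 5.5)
Your skeleton matches the paper's. You compose $\gr\log$ (Proposition~\ref{log-exp}) with the graded form of Theorem~\ref{general-LieAlg-iso}, and then show the composite agrees with $\mathrm{MP}$ on each graded piece, after which additivity, brackets and the $\eps$-action are automatic. The gap is in the step you flag as the main obstacle, which is the real content of the statement.

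Your preferred route does not work under the paper's hypotheses. Only $T$ is assumed weakly induced, so $G$ need not split over a tame extension. For example, $\WR_{L/K}\SL_2$ or $\WR_{L/K}\GL_n$ with $L/K$ wildly ramified has an induced maximal torus. Even with a splitting field, there is no general faithful representation $G\into\GL_n$ in which $G_{x,r}$ and $\mf g_{x,r}$, for arbitrary $x$, are cut out by a multiplicative lattice filtration. Without one, ``$\mathrm{MP}$ is $1+X\mapsto X$'' is an unproved claim, not something you can read off from \cite[Theorem 13.5.1]{KP}.

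Your fallback is the paper's actual argument, but it is incomplete in two places.

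\begin{itemize}
\item \emph{Reduction to the quasisplit case.} For non-quasisplit $G$, the Iwahori decomposition \eqref{IwaDecomp} contains the factor $Z(K)_r$, where $Z$ is the centralizer of $S$. This is not a torus, so no ``root subgroups plus torus'' check is available. The paper first passes to a finite unramified $L/K$ with $G_L$ quasisplit (Steinberg). It then notes that both its isomorphism (proof of Theorem~\ref{general-LieAlg-iso}) and the Kaletha--Prasad Moy--Prasad isomorphism are built over $L$ and restricted to $\Gal(L/K)$-invariants, using $G_{x,r}=(G_L)_{x,r}^{\Gal(L/K)}$.
\item \emph{Explicit behaviour on the factors.} You need to know exactly what the Lazard-side isomorphism does on each factor; this is recorded in Remarks~\ref{cochars} and~\ref{qsplit-expl-desc}. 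For the torus, that description is compared with the proof of \cite[Proposition B.6.9]{KP}. For root groups, it is compared with \cite[Proposition A.5.19(3)]{KP}.
\end{itemize}

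In particular, an R2 root group is a nonabelian Heisenberg-type group, not a vector group, so ``linearized by the same pinning'' has no literal meaning there. The paper handles it through the traceless-$\gamma$ factorization into three copies of $\G_a$ (Proposition~\ref{compatibility}, Claim~\ref{R2-LieAlg}), and only then compares with $\mathrm{MP}$.
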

Here, we are viewing the Lie $\mc O_F$-algebra $\mf g_{x,\frac1{p-1}+}$ as first being a Lie algebra over $\Z_p$, and then using the natural isomorphism $\gr \Z_p\cong \F_p[\eps]$ sending $p\mapsto \eps$ to view $\gr \mf g_{x,\frac1{p-1}+}$ as a Lie algebra over $\F_p[\eps]$.

\subsection{Acknowledgments}
I am very grateful to Claus Sorensen for his extensive guidance and support while writing this article, and for his many helpful comments.  I also deeply appreciate him initially suggesting the problem which turned into this project.

\subsection{Statement of AI use}
No AI tools were used in the creation of this article; all mathematical results and arguments are the work of the author.

\section{Background}

\subsection{\texorpdfstring{$p$}{p}-valuations} \label{pval-background}

In this subsection, we will review some of the basic ideas of Lazard theory and present a few results which will get used repeatedly throughout the paper.  Although the original source for much of the background material is \cite{lazard65}, we find the reference \cite[Part B]{schneiderbook} much more convenient, so we primarily follow its notation and conventions.


For the convenience of the reader, we recall the definition of a $p$-valuation from \cite[p. 169]{schneiderbook}.

\begin{defn}
Let $G$ be any group.  A \textbf{$p$-valuation} on $G$ is a real-valued function
$$\omega:G\setminus\{1\}\to (0,\infty),$$
along with the convention that $\omega(1)=\infty$, satisfying the following four axioms for any $g,h\in G$:
\begin{enumerate}[{(a)}]
\item $\omega(g)>\frac1{p-1}$,
\item $\omega(g\inv h)\geq \min\{\omega(g),\omega(h)\}$,
\item $\omega\left([g,h]\right)\geq \omega(g)+\omega(h)$,
\item $\omega(g^p)=\omega(g)+1$.
\end{enumerate}
\end{defn}


The following easy fact is used in \cite[III.3.2.5]{lazard65}, but we again provide a proof for the convenience of the reader.

\begin{prop} \label{pth}
Let $G$ be a group admitting a $p$-valuation $\omega$.  Then the $p$th power map $G\to G$, $g\mapsto g^p$ is injective.
\end{prop}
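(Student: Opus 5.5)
The plan is to prove injectivity by showing directly that $g^p=h^p$ forces $g=h$, using only axioms (a)--(d). Note that axiom (d) alone already gives that the $p$th power map has trivial ``kernel'': if $g\neq 1$ then $\omega(g^p)=\omega(g)+1<\infty$, so $g^p\neq 1$. Since the $p$th power map is not a homomorphism, this does not immediately give injectivity. I will therefore argue by contradiction, comparing $g^p$ and $h^p$ through a commutator estimate.

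Suppose $g\neq h$ with $g^p=h^p$. Write $h=gz$ with $z=g^{-1}h\neq 1$. Among all such counterexample pairs I will work with one maximizing $\omega(z)$. This needs justification, but a cleaner route is to fix the pair and derive the contradiction $\omega(z)>\omega(z)$ directly. The key estimate is
\[
\omega\bigl(g^{-p}(gz)^p\,z^{-p}\bigr) > \omega(z)+1 .
\]
To obtain it, expand $(gz)^p$ by collecting $z$'s past powers of $g$. This gives $(gz)^p = g^p\cdot z^{g^{p-1}}\cdots z^{g}\,z$, where $z^{y}=y^{-1}zy$. Each conjugate satisfies $z^{y}=z[z,y]$, and $\omega([z,y])\ge \omega(z)+\omega(y)>\omega(z)+\tfrac1{p-1}$ by (c) and (a). Hence the product of conjugates equals $z^p$ modulo the subgroup $G_{>\omega(z)+1/(p-1)}$ and commutators of elements of $G_{\ge\omega(z)}$. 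Those commutators have value $\ge 2\omega(z)>\omega(z)+\tfrac1{p-1}$. Here I use that $G_{\geq r}=\{\omega\ge r\}$ is a normal subgroup by (b) and (c).

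This crude bound only gives that the product equals $z^p$ modulo $G_{>\omega(z)+1/(p-1)}$, which is weaker than the required exponent $\omega(z)+1$. So I will instead work in the graded Lie algebra $\gr G$ as a module over $\F_p[\eps]$, with $\eps$ acting by the $p$th power. Since $p$th powers are well defined on graded pieces (standard; \cite[Section 23--25]{schneiderbook}), the class of $(gz)^p$ in degree $\omega(g)+1$, compared with the class of $g^p$, will give the contradiction.

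The main obstacle is therefore the case analysis on $\omega(z)$ versus $\omega(g)$, using the graded pieces. If $\omega(z)<\omega(g)$, then $\omega(h)=\omega(z)$, and axiom (d) gives $\omega(h^p)=\omega(z)+1\neq\omega(g)+1=\omega(g^p)$, a contradiction. Symmetrically, the case $\omega(z)>\omega(g)$ is easy unless one descends. The real case is when $g$ and $h$ have equal images in $\gr$ at degree $\omega(g)$, that is, $\omega(z)>\omega(g)$. There I use that $\eps\colon \gr_r\to\gr_{r+1}$ is injective by (d). I will show that $g^{-p}h^p\equiv z^p \pmod{G_{>\omega(z)+1}}$, invoking the standard identity that $P(g,z)=g^{-p}(gz)^pz^{-p}$ lies in $G_{\ge \omega(z)+\omega(g)+1}$ up to higher terms. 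This follows from the Hall--Petrescu formula, in which binomial coefficients $\binom pi$ contribute the extra $+1$ via (d) and (b). Then $1=g^{-p}h^p$ gives $\omega(z^p)>\omega(z)+1$, contradicting (d).
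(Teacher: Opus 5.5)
Your reduction is the right one and matches the paper. For $z=g^{-1}h$, the estimate $\omega\bigl(g^{-p}(gz)^pz^{-p}\bigr)>\omega(z)+1$, combined with (b) and (d), gives exactly the identity $\omega(g^{-p}h^p)=\omega(g^{-1}h)+1$ that the paper quotes from \cite[Proposition 25.1]{schneiderbook}. From that identity, $g^p=h^p$ immediately forces $z=1$.

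The case analysis and the detour through $\gr G$ are unnecessary, since the estimate holds for all $g,z$ at once. As written, you also never treat the case $\omega(z)=\omega(g)$. Moreover, the additivity of $\eps$ on $\gr G$ that you invoke rests on the same kind of estimate.

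\textbf{The gap.} The problem is in your justification of the estimate, at exactly the point where its content lies. In the Hall--Petrescu identity $g^pz^p=(gz)^p\,c_2^{\binom p2}\cdots c_{p-1}^{\binom p{p-1}}\,c_p$ with $c_i\in\gamma_i(\langle g,z\rangle)$, the binomial coefficients supply the extra $+1$ only for $2\le i\le p-1$. The last factor has exponent $\binom pp=1$.

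To control $c_p$, you need two facts. First, $\gamma_2(\langle g,z\rangle)$ is the normal closure of $[g,z]$, so it lies in the normal subgroup $\{\omega\geq\omega(g)+\omega(z)\}$. Second, iterating (c) gives $\omega(c_p)\ge\omega(g)+\omega(z)+(p-2)m$, where $m=\min\{\omega(g),\omega(z)\}$.

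This bound falls below $\omega(g)+\omega(z)+1$ whenever $m<\frac1{p-2}$. So your claim that $P(g,z)\in G_{\ge\omega(g)+\omega(z)+1}$ ``up to higher terms'' is unsupported as stated. What is true is $\omega(c_p)>\omega(z)+1$, and only because of axiom (a): $\omega(g)+(p-2)m>\frac1{p-1}+\frac{p-2}{p-1}=1$.

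Your final argument never uses (a), and (a) cannot be dispensed with. Take $\Z_p[\zeta_p]\rtimes\Z_p$ with group law $(x,y)(x',y')=(x+\zeta_p^{y}x',y+y')$. The function $\omega(x,y)=\frac1{p-1}+\min\{v(x),v(y)\}$ satisfies (b)--(d) and takes values $\ge\frac1{p-1}$. Yet $(x,1)^p=(0,p)$ for every $x$, so the $p$th power map is not injective.

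Once the $c_p$ term is handled as above, your argument becomes a complete, self-contained proof of the fact the paper cites.
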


\begin{proof}
By \cite[Proposition 25.1]{schneiderbook}, we know for any $g,h\in G$ that $\omega(g^{-p}h^p)=\omega(g\inv h)+1$.  If $g,h\in G$ satisfy $g^p=h^p$, then $\omega(g\inv h)=-1+\omega(g^{-p}h^p)=\infty$, which implies that $g=h$.
\end{proof}


For the rest of this subsection, all groups admitting a $p$-valuation will be assumed to be compact and the $p$-valuation will be assumed to be topology-defining, which by \cite[p. 181]{schneiderbook} implies that $G$ is pro-$p$.  We also assume that $G$ is of finite rank.

\begin{defn} 
A sequence of elements $(g_1,\ldots, g_r)$ in $G$ is called an \textbf{ordered basis} of $(G,\omega)$ if the continuous function
\begin{align*}
\Z_p^r&\to G\\
(x_1,\ldots, x_r)&\mapsto g_1^{x_1}\cdots g_r^{x_r}
\end{align*}
is a bijection (and hence a homeomorphism by the compactness of $\Z_p^r$), and satisfies the condition for all $x_i\in \Z_p$ that
$$\omega(g_1^{x_1}\cdots g_r^{x_r})=\min_i\{\omega(g_i)+v(x_i)\}$$
where $v$ is the valuation on $\Z_p$ normalized so that $v(p)=1$.
\end{defn}

By \cite[Proposition 26.6]{schneiderbook}, an ordered basis for $(G,\omega)$ always exists, and is of length $\rank(G)$.

\begin{defn} 
$(G,\omega)$ is called \textbf{saturated} if any $g\in G$ such that $\omega(g)>\frac p{p-1}$ is a $p$th power in $G$.
\end{defn}

By \cite[Proposition 26.11]{schneiderbook}, if $(g_1,\ldots, g_r)$ is an ordered basis for $(G,\omega)$, then $(G,\omega)$ is saturated if and only if 
$$\frac1{p-1}<\omega(g_i)\leq \frac p{p-1}\qquad\text{for all }1\leq i\leq r.$$



\begin{rem} \label{OKbasis}
(This is similar to \cite[Example 3.4]{LS}).  Let $K/\Q_p$ be a finite extension equipped with the usual valuation $v$ normalized by $v(p)=1$, and fix any $a>\frac1{p-1}$.  Then it follows immediately from the definition that there is a $p$-valuation $\omega$ on $\O_K$ defined by $\omega(x)\coloneqq v(x)+a$.  Moreover, $(\O_K,\omega)$ is saturated if and only if
$$a\leq \frac1{e_K}+\frac1{p-1},$$
where $e_K=e(K/\Q_p)$ is the absolute ramification degree of $K$.  Indeed, if $x\in \O_K$ satisfies $\omega(x)> \frac{p}{p-1}$, then
$$v(x)=\omega(x)-a> \frac{p}{p-1}-a\geq \frac{p}{p-1}-\frac1{p-1}-\frac1{e_K}=1-\frac1{e_K}.$$
Because $v$ takes values in $\frac1{e_K}\Z$, if $v(x)>1-\frac1{e_K}$, it follows that $v(x)\geq 1$.  It is then immediate that there exists some $y\in \O_K$ such that $x=py$, so indeed $(\O_K,\omega)$ is saturated.  Conversely, if $a> \frac1{e_K}+\frac1{p-1}$, then choosing $x=\varpi_K^{e_K-1}$ for $\varpi_K\in \O_K$ a uniformizer gives us an element of $\O_K$ which is not a multiple of $p$, but which satisfies $\omega(x)> \frac{p}{p-1}$.


We note that $(b_1,\ldots, b_\ell)$ is an ordered basis of $(\O_K,\omega)$ (for any choice of $a>\frac1{p-1}$, so we need not specify in the future) if and only if $(b_i)_{i\leq \ell}$ is a $\Z_p$-basis of $\O_K$ satisfying the equality
$$v\left(\sum_{i=1}^\ell x_ib_i\right)=\min_{i\leq \ell} \{v(b_i)+v(x_i)\}$$
for all $x_i\in \Z_p$.  In such a scenario, we will simply call $(b_i)_{i\leq \ell}$ an \textbf{ordered basis for} $\O_K$.
\end{rem}

\begin{lem} \label{traceless}
Say $p> 2$.  Let $K/\Q_p$ be a finite extension, and let $E/K$ be a quadratic extension with unique nontrivial Galois automorphism denoted by $x\mapsto \ol x$.  Recall that for any $x\in E$, its \textbf{trace} is defined by $x+\ol x$.  Fix an ordered basis $(b_1,\ldots, b_\ell)$ for $\O_K$, and let $\gamma\in \O_E$ be such that $\{1,\gamma\}$ forms an $\O_K$-basis of $\O_E$.  If $\gamma$ is \textbf{traceless}, i.e., if $\gamma+\ol \gamma=0$, then 
$$v(x+y\gamma)=\min\{v(x),v(y\gamma)\}$$
for all $x,y\in K$.  Moreover, $(b_1,\ldots, b_\ell,b_1\gamma, \ldots, b_\ell\gamma)$ is an ordered basis of $\O_E$.
\end{lem}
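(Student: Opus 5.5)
The first claim is an ultrametric statement for a traceless element. The second follows from it together with the ordered-basis property of $(b_i)$.

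\emph{Step 1: the valuation identity.} Let $w=x+y\gamma$ with $x,y\in K$. Since $\ol\gamma=-\gamma$, we have $\ol w=x-y\gamma$, so
$$x=\tfrac12(w+\ol w),\qquad y\gamma=\tfrac12(w-\ol w).$$
The Galois automorphism preserves the unique extension of $v$ to $E$, so $v(\ol w)=v(w)$. Since $p>2$, we also have $v(\tfrac12)=0$. The ultrametric inequality then gives $v(x)\geq v(w)$ and $v(y\gamma)\geq v(w)$, so $\min\{v(x),v(y\gamma)\}\geq v(w)$. The reverse inequality $v(w)\geq\min\{v(x),v(y\gamma)\}$ is just the ultrametric inequality. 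This proves $v(x+y\gamma)=\min\{v(x),v(y\gamma)\}$. This is the key point where both the tracelessness and $p>2$ enter. It is the only nonformal step, and I expect no real obstacle.

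\emph{Step 2: the ordered basis.} Since $(b_i)$ is a $\Z_p$-basis of $\O_K$ and $\{1,\gamma\}$ is an $\O_K$-basis of $\O_E$, the family $(b_1,\ldots,b_\ell,b_1\gamma,\ldots,b_\ell\gamma)$ is a $\Z_p$-basis of $\O_E$. By the criterion in Remark \ref{OKbasis}, applied now to $E$, it remains to check the valuation equality for arbitrary $x_i,y_i\in\Z_p$. Set $x=\sum x_ib_i$ and $y=\sum y_ib_i$, both in $\O_K$. Step 1 gives
$$v\Big(\sum x_ib_i+\sum y_ib_i\gamma\Big)=\min\{v(x),\,v(y)+v(\gamma)\}.$$
The ordered-basis property of $(b_i)$ for $\O_K$ gives $v(x)=\min_i\{v(b_i)+v(x_i)\}$ and $v(y)=\min_i\{v(b_i)+v(y_i)\}$. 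Moreover $v(y)+v(\gamma)=\min_i\{v(b_i\gamma)+v(y_i)\}$. Combining these yields the required formula
$$v\Big(\sum x_ib_i+\sum y_ib_i\gamma\Big)=\min\Big\{\min_i\{v(b_i)+v(x_i)\},\ \min_i\{v(b_i\gamma)+v(y_i)\}\Big\},$$
so the family is an ordered basis of $\O_E$.
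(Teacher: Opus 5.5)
Your proposal is correct and follows essentially the same route as the paper. Step 1 is the paper's argument: it bounds $v(x)$ and $v(y\gamma)$ below by $v(x+y\gamma)$ via $w\pm\ol w$, using $v(\ol w)=v(w)$ and that $2$ is a unit, and then invokes the ultrametric inequality for the reverse bound. Your Step 2 writes out explicitly the ordered-basis verification that the paper leaves as immediate.
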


\begin{proof}
We first claim for any $x,y\in K$ that if $\alpha\coloneqq x+y\gamma$,
$$v(\alpha)=\min\{v(x),v(y)+v(\gamma)\}.$$
Recall that it is always the case that $v(\ol \alpha)=v(\alpha)$, so 
\begin{equation*}
v(\alpha)\leq v(\alpha+\ol\alpha)=v(x+\ol x+y\gamma+\ol{y\gamma})=v(x+y\gamma+x-y\gamma)=v(x),
\end{equation*}
where the last equality follows from $p> 2$.  Similarly, 
$$v(\alpha)\leq v(\alpha-\ol\alpha)=v(y\gamma),$$
hence $v(x+y\gamma)\leq \min\{v(x), v(y\gamma)\}$.  But the opposite inequality is merely the strong triangle inequality, so indeed $v(x+y\gamma)=\min\{v(x),v(y\gamma)\}$ for any $x,y\in \O_K$.  The rest of the result follows immediately from $(b_1,\ldots, b_\ell)$ being an ordered basis of $\O_K$, and $\{1,\gamma\}$ forming an $\O_K$-basis of $\O_E$.
\end{proof}

\begin{rem} \label{tracelessexistence}
We remark that so long as $p> 2$, it is always possible to produce a traceless $\gamma\in \O_E$ such that $\{1,\gamma\}$ forms an $\O_K$-basis of $\O_E$.  Indeed, first choose any $\gamma_0\in \O_E$ such that $\{1,\gamma_0\}$ is an $\O_K$-basis.  Let $\tr(\gamma_0)$ denote the trace of $\gamma_0$, which lies in $\O_K$.  If we set $\gamma\coloneqq \gamma_0-\frac12\tr(\gamma_0)\in \O_E$, then it follows that $\{1,\gamma\}$ is still an $\O_K$-basis of $\O_E$, and by construction $\gamma$ is traceless.
\end{rem}

The following proposition and the subsequent remark will be crucial observations for our descent arguments, and will get used repeatedly throughout the paper.

\begin{prop} \label{invariants}
Let $(G,\omega)$ be a saturated pro-$p$ group.  Let $\theta:G\to G$ be any group homomorphism.  If we define $G^\theta\coloneqq \{g\in G\st\theta(g)=g\}$, which is always a subgroup of $G$, then $(G^\theta,\omega|_{G^\theta})$ is saturated.
\end{prop}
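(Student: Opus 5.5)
The proof is a direct check of the definition of saturation on the fixed-point subgroup, using the injectivity of the $p$th power map from Proposition \ref{pth}. Note that the definition of saturation given above refers only to the restricted function, so the standing conventions (compactness, topology-defining, finite rank) need only be mentioned in passing. Still, we first observe that they pass to $G^\theta$ in the intended setting. If $\theta$ is continuous, $G^\theta$ is closed, hence compact, and the restricted valuation still defines the subspace topology. If the intended statement allows arbitrary homomorphisms, we will only use the purely algebraic definition of saturation, so continuity is not actually needed.

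The main step has two parts.

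\textbf{$\omega|_{G^\theta}$ is a $p$-valuation.} Axioms (a)--(d) are inequalities or identities among values of $\omega$ on elements of $G^\theta$. Since $G^\theta$ is closed under inverses, products, commutators and $p$th powers, each axiom is inherited immediately from $G$.

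\textbf{Saturation.} Let $g\in G^\theta$ with $\omega(g)>\frac p{p-1}$. Since $(G,\omega)$ is saturated, there is some $h\in G$ with $h^p=g$. Applying $\theta$ gives
$$\theta(h)^p=\theta(h^p)=\theta(g)=g=h^p.$$
By Proposition \ref{pth}, the $p$th power map on $G$ is injective because $G$ carries the $p$-valuation $\omega$. Hence $\theta(h)=h$, so $h\in G^\theta$, and $g$ is a $p$th power in $G^\theta$. Thus $(G^\theta,\omega|_{G^\theta})$ is saturated.

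The only real obstacle is ensuring that the $p$th root chosen in $G$ is actually $\theta$-fixed, and injectivity of the $p$th power map resolves this completely. The rest is bookkeeping about which standing hypotheses restrict to subgroups.
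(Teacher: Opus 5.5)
Your main step is exactly the paper's argument. You take a $p$th root $h\in G$ of $g$, observe that $\theta(h)^p=\theta(g)=g=h^p$, and conclude $\theta(h)=h$ from injectivity of the $p$th power map (Proposition~\ref{pth}).

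The one point you leave unresolved is the topological bookkeeping. You make closedness of $G^\theta$ conditional on $\theta$ being continuous, and otherwise declare continuity unnecessary. The paper settles this. Since $G$ is pro-$p$ of finite rank, it is topologically finitely generated (by an ordered basis). Hence every abstract homomorphism $\theta\colon G\to G$ is automatically continuous by \cite[Corollary 1.21]{DDMS}. So $G^\theta$ is always closed, and therefore compact and of finite rank.

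This is not just decoration. Under the standing conventions of that subsection, \emph{saturated} is asserted of compact, finite-rank $p$-valued groups with topology-defining valuation. That is what later allows ordered bases and Lazard's Lie algebra to be used for $G^\theta$. The purely algebraic $p$th-root condition does not give compactness by itself. For example, $\Z_{(p)}\subset\Z_p$ with $\omega(x)=v(x)+1$ satisfies it but is not closed. So your fallback that ``continuity is not actually needed'' should be replaced by the observation that continuity is automatic, together with the fact that closed subgroups inherit finite rank.
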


\begin{proof}
We first note that because $G$ is pro-$p$ and of finite rank, the homomorphism $\theta:G\to G$ is automatically continuous by \cite[Corollary 1.21]{DDMS}.  Thus, $G^\theta$ is a closed subgroup of $G$, so is also of finite rank.

Suppose now that $h\in G^\theta$ is such that $\omega(h)>\frac p{p-1}$.  Since $(G,\omega)$ is saturated, this implies that there is some $g\in G$ such that $g^p=h$.  We then find that 
$$\theta(g)^p=\theta(g^p)=\theta(h)=h=g^p.$$
Because taking $p$th powers in a $p$-valuable group is injective by Proposition \ref{pth}, this implies that $\theta(g)=g$.  Therefore, $g\in G^\theta$ is such that $g^p=h$, so we conclude that $G^\theta$ is saturated.
\end{proof}

\begin{rem} \label{gen_invariants}
We briefly note that the above proof immediately generalizes to showing that if $\Theta\sub \End(G)$ is any set of group homomorphisms $G\to G$, then $\omega|_{G^\Theta}$ is a saturated $p$-valuation on 
$$G^\Theta=\{g\in G\st\theta(g)=g \text{ for all }\theta\in \Theta\}.$$
\end{rem}

\begin{defn} \label{grG}
For any real number $v>0$, we write
$$G_v\coloneqq \{g\in G\st\omega(g)\geq v\}\quad \mathrm{and}\quad G_{v+}\coloneqq \{g\in G\st\omega(g)>v\}.$$
These are normal subgroups of $G$, and form a decreasing exhaustive separated filtration of $G$ satisfying $[G_v,G_{v'}]\sub G_{v+v'}$ for all $v,v'>0$.  For each $v>0$, define $\gr_v G\coloneqq G_v/G_{v+}$, which is an abelian group.  We then define the associated graded abelian group
$$\gr G\coloneqq \bigoplus_{v>0} \gr_v G.$$
As in \cite{dottolehung}, we write $\symb(g)\in \gr G$ for the homogeneous element in $\gr_{\omega(g)}G$ which is the image of $g$.
\end{defn}

\begin{rem}
As explained in \cite[Section 25]{schneiderbook}, $\gr G$ is naturally an $\F_p[\eps]$-Lie algebra, where 
$$[\symb(g),\symb(h)]=[g,h]G_{\omega(g)+\omega(h)}$$
for $g,h\in G$, and $\eps(\symb(g))=g^p G_{\omega(g)+1}=\symb(g^p)$.  Moreover, this is a free $\F_p[\eps]$-module: given an ordered basis $(g_1,\ldots, g_r)$ of $(G,\omega)$, we have an $\F_p[\eps]$-basis of $\gr G$ given by $\left(\symb(g_i)\right)_{i\leq r}$.
\end{rem}

Assume that $G$ is saturated.  As described in \cite[Section 31]{schneiderbook}, there also exists a filtered Lie $\Q_p$-algebra $L_\omega(G)$ of dimension equal to $\rank(G)$.  Within it, there is a $\Z_p$-lattice $L_\omega(G)_{\frac1{p-1}+}$, itself a Lie $\Z_p$-algebra under the same Lie bracket, which comes with mutually inverse $\log$ and $\exp$ maps
\[\begin{tikzcd}
	{L_\omega(G)_{\frac1{p-1}+}} & G
	\arrow["\exp", shift left=2, from=1-1, to=1-2]
	\arrow["\log", shift left=2, from=1-2, to=1-1]
\end{tikzcd}\]
that preserve the respective filtrations.  That is, if we let $\tilde\omega$ denote the filtration on the Lie algebra, then $\tilde\omega(\log(g))=\omega(g)$ for all $g\in G$; in particular, for any $r>\frac1{p-1}$, $\log$ and $\exp$ restrict to mutual inverses between $L_\omega(G)_r$ and $G_r$.  Then we have the following standard result (e.g., it is used in \cite[Subsection 5.3]{BHH+}), which we prove for convenience of the reader.

\begin{prop} \label{log-exp}
Assume $G$ is saturated.  The filtration-preserving $\log$ and $\exp$ induce well-defined mutually inverse isomorphisms of Lie $\F_p[\eps]$-algebras
\[\begin{tikzcd}
	{\gr L_\omega(G)_{\frac1{p-1}+}=\bigoplus_{v>\frac1{p-1}} L_\omega(G)_v/L_\omega(G)_{v+}} & \bigoplus_{v>\frac1{p-1}}G_v/G_{v+}=\gr G.
	\arrow["\gr\exp", shift left=2, from=1-1, to=1-2]
	\arrow["\gr\log", shift left=2, from=1-2, to=1-1]
\end{tikzcd}\]
\end{prop}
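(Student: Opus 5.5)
Since $\log$ and $\exp$ are mutually inverse and satisfy $\tilde\omega(\log g)=\omega(g)$, for each $v>\frac1{p-1}$ they restrict to mutually inverse bijections $G_v\leftrightarrow L_\omega(G)_v$ and $G_{v+}\leftrightarrow L_\omega(G)_{v+}$. The graded maps will be defined degreewise by $\gr_v\log(gG_{v+})=\log(g)+L_\omega(G)_{v+}$ and $\gr_v\exp(X+L_\omega(G)_{v+})=\exp(X)G_{v+}$. The plan is to show these maps are well defined and then check that they respect the Lie $\F_p[\eps]$-structure; once $\gr\log$ is shown to be a well-defined homomorphism of Lie $\F_p[\eps]$-algebras, bijectivity follows because $\gr\exp$ is clearly its set-theoretic inverse degreewise.

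\textbf{Well-definedness and additivity.} We need, for $g\in G_v$ and $h\in G_{v+}$, that $\log(gh)-\log(g)\in L_\omega(G)_{v+}$. I would get this from the Baker--Campbell--Hausdorff description of the group law used in \cite[Section 31]{schneiderbook}: $gh=\exp(\log g)\exp(\log h)=\exp(\Phi(\log g,\log h))$, where $\Phi(X,Y)=X+Y+\tfrac12[X,Y]+\cdots$ and every higher term is an iterated bracket involving both $X$ and $Y$. The key estimate is that on the saturated lattice each such term has filtration value strictly greater than $\min(\tilde\omega(X),\tilde\omega(Y))$. Granting this, we get $\log(gh)\equiv \log g+\log h$ modulo $L_\omega(G)_{v+}$ whenever $g,h\in G_v$, which gives both well-definedness and additivity of $\gr_v\log$.

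\textbf{Compatibility with $\eps$ and the bracket.} For $\eps$, we have $\log(g^p)=p\log(g)$ exactly, and $\eps$ acts on $\gr L$ as multiplication by $p$, which raises the filtration by exactly $1$. Hence $\gr\log(\eps\,\symb(g))=\eps\,\gr\log(\symb g)$. For the bracket we need
$$\log([g,h])\equiv[\log g,\log h]\pmod{L_\omega(G)_{(\omega(g)+\omega(h))+}}.$$
This follows from the commutator formula of BCH type, $\log(ghg^{-1}h^{-1})=[X,Y]+(\text{brackets of degree}\ge 3\text{ in }X,Y)$, and the higher terms have strictly larger filtration by the same estimate. Additivity and both compatibilities together show that $\gr\log$ is a homomorphism of Lie $\F_p[\eps]$-algebras.

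\textbf{Main obstacle.} The main difficulty is controlling the denominators in the BCH series. A degree-$n$ term carries coefficients with $v\ge -\frac{n-1}{p-1}$, while its filtration gains at least $(n-1)\cdot\min\tilde\omega>\frac{n-1}{p-1}$, so the net gain is strictly positive. Rather than redo this estimate, I would cite the standard result in \cite[Section 31]{schneiderbook} that the group law and the commutator on $L_\omega(G)_{\frac1{p-1}+}$ are given by convergent BCH-type series satisfying exactly these strict inequalities.
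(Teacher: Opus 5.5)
Your treatment of well-definedness, additivity, $\eps$-linearity and bijectivity is fine, granting Lazard's theorem that on a saturated group $\log(gh)=\Phi(\log g,\log h)$, with $\Phi$ the Hausdorff series. This is a different route from the paper's, which never uses $\Phi$. The paper combines Lazard's limit formulas $\log x+\log y=\log\lim_n(x^{p^n}y^{p^n})^{p^{-n}}$ and $[\log g,\log h]=\log\lim_n[g^{p^n},h^{p^n}]^{p^{-2n}}$ with the group-theoretic estimates of \cite[Propositions 25.1, 25.3]{schneiderbook}, so no denominators ever need to be controlled.

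\textbf{The gap is in the bracket step.} There you must beat $\tilde\omega(X)+\tilde\omega(Y)$, not $\min\{\tilde\omega(X),\tilde\omega(Y)\}$, and ``the same estimate'' does not do it. Write $m=\min\{\tilde\omega(X),\tilde\omega(Y)\}$. A degree-$n$ term $c\,[Z_1,\ldots,Z_n]$ of $\log(e^Xe^Ye^{-X}e^{-Y})$ containing at least one $X$ and one $Y$ is only bounded below by $\tilde\omega(X)+\tilde\omega(Y)+(n-2)m+v(c)$. With only $v(c)\ge-\frac{n-1}{p-1}$, the net gain $(n-2)m-\frac{n-1}{p-1}$ can be negative. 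For instance, with $p=3$ and $n=3$ you would need $m>1$, whereas only $m>\frac12$ is known.

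What you actually need is the sharper bound $v(c)\ge-\frac{n-2}{p-1}$ for the commutator series, and this requires its own argument. One way is as follows.
\begin{itemize}
\item Write $\log[g,h]=\Phi(Y+W,-Y)$ with $W=e^{\mathrm{ad}X}Y-Y=\sum_{k\ge1}\frac1{k!}(\mathrm{ad}X)^kY$.
\item From $v(k!)\le\frac{k-1}{p-1}$ you get $\tilde\omega(W)\ge\tilde\omega(X)+\tilde\omega(Y)$ and $\tilde\omega(W-[X,Y])>\tilde\omega(X)+\tilde\omega(Y)$.
\item Discard the $W$-free part of $\Phi(Y+W,-Y)-W$, which sums to $\Phi(Y,-Y)=0$.
\item Every remaining term of degree $\ge 2$ contains both a $W$ and a $Y$. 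The BCH coefficient bound then gives it filtration at least $\tilde\omega(W)+\tilde\omega(Y)-\frac1{p-1}$, which is $>\tilde\omega(X)+\tilde\omega(Y)$.
\end{itemize}
Alternatively, argue as the paper does, via Lazard's limit formula for the bracket and \cite[Proposition 25.3]{schneiderbook}.

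Any citation you fall back on must supply this sharper commutator estimate; the coefficient estimate you wrote down does not.
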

Here, because the $\Z_p$-action on $L_\omega(G)_{\frac1{p-1}+}$ satisfies $\tilde\omega(x\lambda)=v(x)+\tilde\omega(\lambda)$ for $x\in \Z_p$ and $\lambda\in L_\omega(G)_{\frac1{p-1}+}$, then $\gr L_\omega(G)_{\frac1{p-1}+}$ is naturally a Lie algebra over the ring $\gr \Z_p\iso \F_p[\eps]$, where $\eps$ acts via a degree-one map on this graded space (corresponding to scaling by $p$).


\begin{proof}
We will show that $\gr \log$ is well-defined, $\F_p[\eps]$-linear, and preserves the Lie bracket; one can easily show using similar arguments that the same holds for $\gr \exp$.

By construction, $\gr\log$ is the direct sum of the homogeneous maps 
$$\gr_v\log:G_v/G_{v+}\to L_\omega(G)_v/L_\omega(G)_{v+}$$
defined by taking any representative $g\in G_v$ of a coset, and setting $\gr_v\log(g)$ to be the coset $\log(g)+L_\omega(G)_{v+}$, which a priori need not be well-defined.  However, this follows from the following fact: given any $g,h\in G$, 
$$\tilde\omega\left(\log(gh)-\log(g)-\log(h)\right)>\max\{\omega(g),\omega(h)\}.$$
Indeed, if $g\in G_v$ and $h\in G_{v+}$, then $\log(h)\in L_\omega(G)_{v+}$, so this fact implies that $\log(gh)-\log(g)\in L_\omega(G)_{v+}$.  This fact similarly immediately proves that each $\gr_v\log$ is $\F_p$-linear.

To prove this fact, we note that an easy inductive argument using both parts of \cite[Proposition 25.1]{schneiderbook} shows for any $n\geq 1$ that
\begin{equation}\label{25.1gen}
\omega(h^{-p^n}g^{-p^n}(gh)^{p^n})>\max\{\omega(g),\omega(h)\}+n.\end{equation}
We also have by \cite[III.2.1.10]{lazard65} that one can compute $\log(x)+\log(y)$ for $x,y\in G$ by letting $a_n\in G$ for $n\geq 1$ be the unique elements satisfying $a_n^{p^n}=x^{p^n}y^{p^n}$ (which exist and are unique by the saturation of $G$), noting that $(a_n)_n$ is a convergent sequence in $G$ say with limit $a\in G$, in which case $\log(a)$ is the desired value.  To ease notation, one can write this as
$$\log(x)+\log(y)=\log\left(\lim_{n\to \infty}\left(x^{p^n}y^{p^n}\right)^{1/p^n}\right).$$

It follows that
\begin{align*}
\log(gh)-\log(g)-\log(h)&=\log(h\inv)+\log(g\inv)+\log(gh)
\end{align*}
has by \ref{25.1gen} valuation equal to
\begin{align*}
\lim_{n\to \infty} \omega\left(\left(h^{-p^n}g^{-p^n}(gh)^{p^n}\right)^{1/p^n}\right)
&=\lim_{n\to \infty}\omega(h^{-p^n}g^{-p^n}(gh)^{p^n})-n\\
&>\max\{\omega(g),\omega(h)\}.
\end{align*}
(The inequality remains strict despite the limit due to the values attained by $\omega$ being a discrete set.)  We have thus shown that $\gr\log$ is well-defined and $\F_p$-linear; that it is $\F_p[\eps]$-linear follows from \cite[Lemma 31.1]{schneiderbook}, as we know that $\log(g^x)=x\log(g)$ for any $g\in G$, $x\in \Z_p$.

What remains is to prove that $\gr\log$ preserves the Lie bracket.  Using \cite[Exercice III.2.1.10]{lazard65} and the same shorthand as before, we have for any $g,h\in G$
$$[\log(g),\log(h)]\coloneqq \log(g)\log(h)-\log(h)\log(g)=\log\left(\lim_{n\to \infty}[g^{p^n},h^{p^n}]^{1/p^{2n}}\right).$$
Thus, one can compute that
$$\tilde\omega\left(\log([g,h])-[\log(g),\log(h)]\right)=\lim_{n\to \infty} \omega\left([g,h]^{p^{2n}}[g^{p^n},h^{p^n}]\inv\right)-2n,$$
which for any value of $n$ is strictly greater than $\omega(g)+\omega(h)$ by a repeated application of \cite[Proposition 25.3]{schneiderbook}.  As usual, this strict inequality also holds in the limit by the discreteness of $\omega$, proving immediately that the Lie bracket is preserved by $\gr\log$.
\end{proof}


We will use the following technical observation in later arguments.

\begin{lem} \label{limits}
Let $(\omega_n)_{n\geq 1}$ be a sequence of $p$-valuations on a pro-$p$ group $G$.  Suppose that $(\omega_n(g))_{n\geq 1}$ is a convergent sequence, bounded away from $\frac1{p-1}$, for each $g\in G$.  Then $\omega(g)\coloneqq\lim_{n\to \infty} \omega_n(g)$ defines a $p$-valuation on $G$.
\end{lem}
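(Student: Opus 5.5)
The plan is to verify the four axioms (a)--(d) for $\omega$ directly, using that non-strict inequalities and equalities between real numbers are preserved under limits. The hypothesis that $(\omega_n(g))_n$ is bounded away from $\frac1{p-1}$ is needed only for the strict inequality in (a). Throughout, I read the hypothesis as concerning $g\neq 1$ (for $g=1$ every $\omega_n(1)=\infty$), and I keep the convention $\omega(1)=\infty$.

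\textbf{Well-definedness and axiom (a).} For $g\neq 1$, the sequence $(\omega_n(g))_n$ is a convergent sequence of real numbers, so $\omega(g)$ is a finite real number. By hypothesis there is some $\delta_g>0$ with $\omega_n(g)\geq \frac1{p-1}+\delta_g$ for all $n$. Passing to the limit gives $\omega(g)\geq \frac1{p-1}+\delta_g>\frac1{p-1}>0$. Thus $\omega$ is a map $G\setminus\{1\}\to(0,\infty)$ satisfying (a).

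\textbf{Axioms (b) and (c).} For (b), fix $g,h\in G$. If $g\inv h=1$ the inequality is trivial. If instead one of $g,h$ equals $1$, the claim reduces to $\omega(g\inv)\geq\omega(g)$ or $\omega(h)\geq\omega(h)$; the first follows by applying (b) for each $\omega_n$ with $h=1$ and passing to the limit. In the remaining case $g,h,g\inv h$ are all nontrivial. Then $\omega_n(g\inv h)\geq\min\{\omega_n(g),\omega_n(h)\}$ for every $n$, and since $\min$ is continuous on $\R^2$, letting $n\to\infty$ gives (b).

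Axiom (c) is handled in the same way. If $[g,h]=1$ there is nothing to prove, and if $[g,h]\neq 1$ then necessarily $g,h\neq 1$. In that case $\omega_n([g,h])\geq \omega_n(g)+\omega_n(h)$ for every $n$, and all terms are finite with convergent limits, so the inequality passes to the limit.

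\textbf{Axiom (d).} If $g=1$ both sides of (d) are $\infty$. If $g\neq 1$, then $g^p\neq 1$ by Proposition \ref{pth} applied to the $p$-valuation $\omega_1$, since the $p$th power map is injective. So $\omega_n(g^p)=\omega_n(g)+1$ is an equality of finite convergent sequences, and taking limits gives $\omega(g^p)=\omega(g)+1$.

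\textbf{Main obstacle.} There is no real obstacle here. The only points requiring care are the bookkeeping of the value $\infty$ at the identity, dealt with by the case splits above, and the strictness in (a), which is exactly what the ``bounded away'' hypothesis supplies. No topology-defining or saturation property is asserted, so nothing further needs checking.
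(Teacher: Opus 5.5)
Your proposal is correct and is essentially the paper's argument: the bounded-away hypothesis gives the strict inequality in axiom (a), and axioms (b)--(d) follow by passing the corresponding (in)equalities for each $\omega_n$ to the limit. Your case distinctions for the convention $\omega(1)=\infty$ just make explicit what the paper's one-line proof leaves implicit.
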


\begin{proof}
Since every $(\omega_n(g))_n$ is bounded away from $\frac1{p-1}$ and convergent, then each $\omega(g)$ lies in $\left(\frac1{p-1},\infty\right)$.  The remaining pieces of the definition of being a $p$-valuation follow immediately from taking limits.
\end{proof}


\subsection{Review of quasisplit reductive groups} \label{qsplit-review}

We will now review the structure of quasisplit reductive groups over $p$-adic fields, with an emphasis on their root groups.  A great reference for almost everything in this subsection is \cite[Section 2.6]{KP}.

\begin{notn}
Let $p>2$.  Let $G$ be any connected reductive quasisplit group defined over a fixed finite extension $K/\Q_p$.  Pick a maximally $K$-split maximal $K$-torus $T\leq G$, with $S\leq T$ its maximal $K$-split subtorus; since $G$ is quasisplit, $T$ is the centralizer of $S$ in $G$.  Let $(X^*(S),\Phi, X_*(S), \Phi^\vee)$ be its associated relative root datum.  We also let $(X^*(T),\tilde\Phi, X_*(T),\tilde\Phi^\vee)$ denote the absolute root datum.  We also fix compatible systems of positive and simple relative/absolute roots: $\Phi^+\sub \Phi$, $\tilde\Phi^+\sub \tilde\Phi$, $\Delta\sub \Phi^+$, and $\tilde\Delta\sub \tilde\Phi^+$.  In particular, we have a $\Theta\coloneqq \Gal(\ol K/K)$-action on $\tilde\Phi$, and we can choose everything so that $\tilde\Delta$ is $\Theta$-invariant, and hence corresponds to a Borel $K$-subgroup $B\leq G$ containing $T$.

For each $\tilde\alpha\in \tilde\Phi$, we let $K_{\tilde\alpha}/K$ be the minimal finite extension over which $\tilde\alpha$ is defined; more concretely, take the stabilizer $\Theta_{\tilde\alpha}$ of $\tilde\alpha$, and let $K_{\tilde\alpha}$ denote its fixed field.

Since $G$ is quasisplit, we have a projection map $\tilde\Phi\onto \Phi$ given by restriction to $S$, and the preimage of any element of $\Phi$ is a $\Theta$-orbit of an element in $\tilde\Phi$.  In particular, if $\alpha\in \Phi$, different choices of $\tilde\alpha\in \tilde\Phi$ lying above $\alpha$ have fields of definitions $K_{\tilde\alpha}$ which are Galois conjugates.

For each $\alpha\in \Phi$, there is an associated unipotent root $K$-subgroup $U_\alpha$ normalized by $S$ whose Lie algebra is the direct sum of the weight spaces for the action of $S$ on $\mathrm{Lie}(G)$ which are $\Z^+$-multiples of $\alpha$.  (That is, if $\alpha$ is multipliable, we by convention take $U_{2\alpha}\sub U_\alpha$.). Similarly, for $\tilde\alpha\in \tilde\Phi$, we can define a unipotent $K_{\tilde\alpha}$-subgroup $U_{\tilde\alpha}$ normalized by $T$ in the same way.

Given our tori $S\leq T\leq G$, we induce tori $S\ad\leq T\ad\leq G\ad$, $S\der\leq T\der\leq G\der$, and $S\sc\leq T\sc\leq G\sc$ which are maximal $K$-split tori and maximally $K$-split maximal tori respectively, given by taking the image of our tori under $G\to G\ad$, intersecting our tori with $G\der\leq G$, and taking the preimage of our tori under the composition $G\sc\to G\der \to G$.

By \cite[Section 2.6(a)]{KP}, the absolute (resp., relative) root systems of $G$ are naturally identified with those of $G\ad$, $G\der$, and $G\sc$ via the natural maps
$$G\sc\to G\der\to G\to G\ad.$$
Thus, we will not distinguish between any of these in our notation.
\end{notn}

\begin{rem}
(See \cite[pp. 96-97]{KP}.)  Let $\alpha\in \Phi$.  We say $\alpha$ is \textbf{multipliable} if $2\alpha\in \Phi$, and \textbf{divisible} if there is some $\beta\in \Phi$ such that $\alpha=2\beta$.  We will classify the elements of $\Phi$ into three types as follows:
\begin{itemize}
\item $\alpha$ is of type R1 if it is neither multipliable nor divisible.
\item $\alpha$ is of type R2 if it is multipliable.
\item $\alpha$ is of type R3 if it is divisible.
\end{itemize}
We write $\Phi^{\mathrm{nd}}$ for the set of nondivisible roots, i.e., those of type R1 or R2.  This classification allows us to describe $U_\alpha(K)$ rather explicitly.
\end{rem}

We fix once and for all a Chevalley-Steinberg system $X$ for $G\ad$, i.e., a Chevalley system $X=\{X_{\tilde\alpha}\in \mf g_{\tilde\alpha}(\ol K) \st \tilde\alpha\in \tilde\Phi\}$ which satisfies the following invariance property for any $\tilde\alpha\in \tilde\Phi$ restricting to any $\alpha\in \Phi$, and for any $\sigma\in \Theta$:
\begin{enumerate}[{(i)}]
\item If $\alpha$ is of type R1 or R2, then 
$$\sigma(X_{\tilde\alpha})=X_{\sigma\tilde\alpha}.$$
\item If $\alpha$ is of type R2 and $\tilde\alpha'\in \Theta\cdot \tilde\alpha$ is the unique root such that $\tilde\beta=\tilde\alpha+\tilde\alpha'\in \tilde\Phi$, and $\sigma\in \Theta_{\tilde\beta}$, then 
$$\sigma(X_{\tilde\beta})=\eps(\sigma)\cdot X_{\tilde\beta},$$
where $\eps:\Theta_{\tilde\beta}/\Theta_{\tilde\alpha}\to \{\pm 1\}$ is the unique nontrivial character.
\end{enumerate}
This exists by \cite[Lemma 2.9.14]{KP}.

Given our Chevalley-Steinberg system $X$, we get for each $\tilde\alpha\in \tilde\Phi$ an isomorphism of $K_{\tilde\alpha}$-groups
$$\G_a\xto\sim U_{\tilde\alpha}$$
which is uniquely determined by requiring its differential to send $1$ to $X_{\tilde\alpha}$.  By \cite[Construction 2.9.16]{KP}, the $\Theta$-invariance properties of our Chevalley-Steinberg system allow it to induce isomorphisms as follows.

If $\alpha\in \Phi$ is of type R1, then the preimage of $\alpha$ in $\tilde\Phi$ consists of a set $\{\tilde\alpha_1,\ldots, \tilde\alpha_{n_\alpha}\}$ of pairwise orthogonal elements, and for any $\tilde\alpha\in \tilde\Phi$ lying above $\alpha$, our Chevalley system $X$ induces an isomorphism $R_{K_{\tilde\alpha}/K}U_{\tilde\alpha}\xto{\sim} U_\alpha$ of $K$-groups.  In particular, we get an isomorphism
$$u_\alpha:K_{\tilde\alpha}\xto{\sim} U_\alpha(K).$$

Note that $\alpha\in \Phi$ is of type R2 if and only if $2\alpha\in \Phi$ is of type R3.  In such a case, the preimage of $\alpha$ in $\tilde\Phi$ is 
$$\{\tilde\alpha_1,\tilde\alpha_1',\ldots, \tilde \alpha_{n_\alpha}, \tilde\alpha_{n_\alpha}'\}$$
with $\tilde\alpha_i$ orthogonal to $\tilde\alpha_j$ and $\tilde\alpha_j'$ if $i\neq j$.  If we set $\tilde\beta_i\coloneqq\tilde\alpha_i+\tilde\alpha'_i$ for all $i$, then the preimage of $2\alpha$ in $\tilde\Phi$ is precisely
$$\{\tilde\beta_1,\ldots, \tilde\beta_{n_\alpha}\}.$$
For any such $\tilde\alpha, \tilde\alpha', \tilde\beta=\tilde\alpha+\tilde\alpha'$, $K_{\tilde\alpha}/K_{\tilde\beta}$ is now a quadratic Galois extension, and our Chevalley-Steinberg system $X$ induces isomorphisms of $K$-groups
$$\mathrm{R}_{K_{\tilde\beta}/K}U_{K_{\tilde\alpha}/K_{\tilde\beta}}\xto{\sim}U_\alpha,\qquad R_{K_{\tilde\beta}/K} U^0_{K_{\tilde\alpha}/K_{\tilde\beta}}\xto{\sim} U_{2\alpha}$$
where $U_{K_{\tilde\alpha}/K_{\tilde\beta}}$ and $U_{K_{\tilde\alpha}/K_{\tilde\beta}}^0$ have $R$-points for a $K_{\tilde\beta}$-algebra $R$ given by
$$U_{K_{\tilde\alpha}/K_{\tilde\beta}}(R)=\left\{(x,y)\in (R\otimes_{K_{\tilde\beta}}K_{\tilde\alpha})^2\Bigst x\ol x=y+\ol y\right\}$$
and
$$U_{K_{\tilde\alpha}/K_{\tilde\beta}}^0(R)=\{y\in R\otimes_{K_{\tilde\beta}} K_{\tilde\alpha}\st y+\ol y=0\}.$$
(Here, $\ol{(\cdot)}\in \Gal(K_{\tilde\alpha}/K_{\tilde\beta})$ is the unique nontrivial Galois automorphism extended to $R\otimes_{K_{\tilde\beta}}K_{\tilde\alpha}$.)  The group structure on $U_{K_{\tilde\alpha}/K_{\tilde\beta}}(R)$ is given by
$$(x,y)\cdot (x',y')=(x+x', y+x\ol{x'}+y').$$
We thus have from our Chevalley-Steinberg system an isomorphism
$$u_\alpha:U_{K_{\tilde\alpha}/K_{\tilde\beta}}(K_{\tilde\beta})=\{(x,y)\in K_{\tilde\alpha}^2\st x\ol x=y+\ol y\}\xto{\sim}U_\alpha(K).$$
Note that these isomorphisms satisfy the property that
$$su_\alpha(x)s\inv=u_\alpha(\alpha(s)x)$$
if $s\in S(K_{\tilde\alpha})$ and $x\in K_{\tilde\alpha}$ when $\alpha$ is of type R1, and
$$su_\alpha(x,y)s\inv=u_\alpha(\alpha(s)x, (2\alpha)(s)y)$$
if $s\in S(K_{\tilde\alpha})$ and $(x,y)\in U_{K_{\tilde\alpha}/K_{\tilde\beta}}(K_{\tilde\beta})$ when $\alpha$ is of type R2.

\begin{rem} \label{Chev-Stein-isoms}
(See \cite[p. 97]{KP}.)  Our Chevalley-Steinberg system gives similar isomorphisms $u_{\tilde\alpha}:K_{\tilde\alpha}\xto{\sim}U_{\tilde\alpha}(K_{\tilde\alpha})$ from the $K_{\tilde\alpha}$-group isomorphisms $\G_a\xto{\sim} U_{\tilde\alpha}$.  We have the following concrete descriptions of how these two collections of isomorphisms interact.
\begin{itemize}
\item If $\alpha$ is of type R1, fix $\tilde\alpha\in \tilde\Phi$ lying above $\alpha$, so now the finite set $\{\sigma\tilde\alpha\st \sigma\in \Theta/\Theta_{\tilde\alpha}\}$ is exactly the roots lying above $\alpha$.  Then for $x\in K_{\tilde\alpha}$,
$$u_\alpha(x)=\prod_{\sigma\in \Theta/\Theta_{\tilde\alpha}} u_{\sigma\tilde\alpha}(\sigma(x)).$$
Moreover, these terms commute (in fact, $U_\alpha$ is commutative).
\item If $\alpha$ is of type R2, or equivalently $2\alpha\in \Phi$ is of type R3, we fix $\tilde\alpha$ lying above $\alpha$, which determines a unique other root $\tilde\alpha'$ also lying above $\alpha$ which is not orthogonal to $\tilde\alpha$.  Now $\tilde\beta\coloneqq \tilde\alpha+\tilde\alpha'\in \tilde\Phi$ lies above $2\alpha$.  The preimages of $\alpha$ and $2\alpha$ in $\tilde\Phi$ are respectively
$$\{\sigma\tilde\alpha, \sigma\tilde\alpha'\st \sigma\in \Theta/\Theta_{\tilde\beta}\},\qquad\{\sigma\tilde\beta\st \sigma\in \Theta/\Theta_{\tilde\beta}\}.$$
As usual, let $\ol{(\cdot)}$ denote the nontrivial automorphism in $\Gal(K_{\tilde\alpha}/K_{\tilde\beta})$, which switches $\tilde\alpha$ and $\tilde\alpha'$.  Then for $(x,y)\in U_{K_{\tilde\alpha}/K_{\tilde\beta}}(K_{\tilde\beta})$,
\begin{align*}
u_\alpha(x,y)&=\prod_{\sigma\in \Theta/\Theta_{\tilde\beta}} \sigma\left(u_{\tilde\alpha}(x)u_{\tilde\alpha'}(\ol x)u_{\tilde\beta}(y)\right)\\
&=\prod_{\sigma\in \Theta/\Theta_{\tilde\beta}} u_{\sigma\tilde\alpha}(\sigma(x))u_{\sigma\tilde\alpha'}(\sigma(\ol x))u_{\sigma\tilde\beta}(\sigma(y)).
\end{align*}
Here, these three-term pieces indexed by $\sigma\in \Theta/\Theta_{\tilde\beta}$ commute with one another.  (We also note that we are abusing notation slightly in the final line, as strictly speaking $\sigma(x)$ for $x\in K_{\tilde\alpha}$ is not well-defined for $\sigma\in \Theta/\Theta_{\tilde\beta}$, however the three-term piece as a whole is fixed under the action of $\Theta_{\tilde\beta}$, so one could merely fix a set of representatives of $\Theta/\Theta_{\tilde\beta}$ and the result is independent of this choice.)
\end{itemize}
\end{rem}


\begin{rem} \label{root-filts}
Our Chevalley-Steinberg system $X$ induces in the usual way (e.g., see \cite[Construction 6.1.17]{KP}) a Chevalley valuation of the root datum, which is an absolutely special point $x_0$ of the semisimple apartment $\mc A(S)$ of $G$ associated to $S$ by \cite[Proposition 7.11.4]{KP}.  We can translate $x_0$ by any element of the vector space $V(S)\coloneqq \R\otimes_\Z X_*(S)$; this action factors through $V(A_G)$ for $A_G\leq G$ the maximal $K$-split torus in the center of $G$ (see \cite[p. 200]{KP}), giving in fact an action of $V(S/A_G)$.  Now $S/A_G$ is isogenous to a maximal $K$-split torus $S\ad$ in $T\ad$, the image of $T$ in $G\ad$, which is a maximally split maximal $K$-torus.  Thus, 
$$V(S)/V(A_G)=V(S/A_G)=V(S\ad)=V(T\ad)^\Theta.$$
In fact, $\mc A(S)$ is an affine space over these equal vector spaces.  Given any $x\in \mc A(S)$, we have an associated filtration on the root groups given as follows.  (Note that $U_{\alpha,x,r}$ is used in two different ways in \cite{KP}; our notation agrees with \cite[7.3.3]{KP} with the constant function $f=r$, but disagrees with \cite[6.1.2]{KP}.)  Note that there is a unique $v\in V(S\ad)$ such that $x=x_0+v$.

If $\alpha\in \Phi$ is of type R1, then for $r\geq 0$,
$$U_{\alpha,x,r}=\{u_\alpha(x)\st x\in K_{\tilde\alpha}, v(x)+\langle \alpha,v\rangle\geq r\}.$$
We also write 
$$U_{\alpha,x,r+}=\bigcup_{s>r} U_{\alpha, x, s}=\{u_\alpha(x)\st x\in K_{\tilde\alpha}, v(x)+\langle \alpha,v\rangle> r\}.$$
If $\alpha\in \Phi$ is of type R2, then we have for $r\geq 0$ the filtration
$$U_{\alpha,x,r}= \Big\{u_\alpha(x,y)\Bigst x,y\in K_{\tilde\alpha}, x\ol x=y+\ol y, \quad \min\{v(x)+\langle \alpha,v\rangle, v(y)+2\langle \alpha,v\rangle\}\geq r\Big\}.$$
We also have
\begin{align*}
U_{\alpha,x,r+}&= \bigcup_{s>r} U_{\alpha,x,s}\\
&=\Big\{u_\alpha(x,y)\Bigst x,y\in K_{\tilde\alpha}, x\ol x=y+\ol y, \quad \min\{v(x)+\langle \alpha,v\rangle, v(y)+2\langle \alpha,v\rangle\}> r\Big\}.
\end{align*}
\end{rem}

In all of the above, we denote the usual additive valuation on $\ol{\Q_p}$ by $v$, normalized so that $v(p)=1$ (there should be no situation where confusion could arise between elements $v\in V(S\ad)$ and the valuation $v$).

\section{\texorpdfstring{$p$}{p}-valuations on root groups}

In this section, we will study the root groups of a quasisplit $p$-adic reductive group.  More specifically, we take the exact same setup as in Subsection \ref{qsplit-review}, and what we seek to prove is the following for any $\alpha\in \Phi^{\mathrm{nd}}$; the proof of this result will take the remainder of this section.

\begin{prop} \label{root-group-pval}
Let $x\in \mc A(S)$ be arbitrary, and assume $p> 2$.  For $u\in U_{\alpha,x,\frac1{p-1}+}$, define
$$\omega(u)\coloneqq \min\left\{r>\frac1{p-1} \Biggst u\in U_{\alpha,x,r}\right\}.$$
Then $\omega$ is a $p$-valuation on $U_{\alpha,x,\frac1{p-1}+}$, and moreover it is saturated.
\end{prop}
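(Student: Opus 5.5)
Write $c=\langle\alpha,v\rangle$ where $x=x_0+v$, and treat the two types of nondivisible roots separately.

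\textbf{Type R1.} Here $u_\alpha:K_{\tilde\alpha}\xto\sim U_\alpha(K)$ is a group isomorphism, and by Remark \ref{root-filts} we have $\omega(u_\alpha(z))=v(z)+c$.

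The domain is $\{z: v(z)+c>\frac1{p-1}\}$, which equals $\varpi^k\O_{K_{\tilde\alpha}}$ for the smallest $k$ with $k/e_{K_{\tilde\alpha}}+c>\frac1{p-1}$. Rescaling by $\varpi^k$ identifies this group with $(\O_{K_{\tilde\alpha}},\omega')$, where $\omega'(y)=v(y)+a$ and $a=k/e_{K_{\tilde\alpha}}+c>\frac1{p-1}$.

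Since the group is abelian, axioms (b)–(d) of a $p$-valuation are immediate; for (d), use $u_\alpha(z)^p=u_\alpha(pz)$. By the minimality of $k$ we have $a\le \frac1{e_{K_{\tilde\alpha}}}+\frac1{p-1}$, so Remark \ref{OKbasis} gives saturation.

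\textbf{Type R2.} Let $E=K_{\tilde\alpha}$ and $F=K_{\tilde\beta}$, and set
$$\omega(u_\alpha(x,y))=\min\{v(x)+c,\ \tfrac12(v(y)+2c)\}\cdot\text{?}$$
Precisely, by Remark \ref{root-filts} this is $\min\{v(x)+c,\ v(y)+2c\}$. I would verify the axioms directly from the group law $(x,y)(x',y')=(x+x',\,y+x\ol{x'}+y')$.

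\emph{Axiom (a)} holds by the definition of the domain.

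\emph{Axiom (b).} First, $(x,y)^{-1}=(-x,\ol y)$; this follows from $x\ol x=y+\ol y$. The new coordinate $y+x\ol{x'}+y'$ satisfies $v(x\ol{x'})+2c\ge (v(x)+c)+(v(x')+c)$, which is at least the minimum of the two valuations, because both exceed $\frac1{p-1}>0$.

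\emph{Axiom (c).} The commutator is $[(x,y),(x',y')]=(0,\,x\ol{x'}-x'\ol x)$, and $v(x\ol{x'}-x'\ol x)+2c\ge \omega+\omega'$.

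\emph{Axiom (d).} By induction, $(x,y)^n=(nx,\ ny+\binom n2 x\ol x)$. Using $x\ol x=y+\ol y$, this is $\bigl(nx,\ ny+\tfrac{n(n-1)}2(y+\ol y)\bigr)$. Since $p$ is odd, $p\mid\binom p2$, so the second coordinate for $n=p$ is $p\bigl(y+\tfrac{p-1}2(y+\ol y)\bigr)$.

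I must show its valuation is exactly $v(y)+1$ whenever the $y$-term realizes the minimum. The main obstacle is that $y+\tfrac{p-1}2(y+\ol y)$ could a priori lose valuation. To handle it, decompose $y=\tfrac12(y+\ol y)+y_0$ with $y_0$ traceless, and work in the traceless basis of Lemma \ref{traceless} and Remark \ref{tracelessexistence}. Then
$$y+\tfrac{p-1}2(y+\ol y)=\tfrac p2(y+\ol y)+y_0,$$
and Lemma \ref{traceless} gives
$$v\bigl(\tfrac p2(y+\ol y)+y_0\bigr)=\min\{v(y+\ol y)+1,\ v(y_0)\}.$$
Meanwhile $v(y)=\min\{v(y+\ol y),v(y_0)\}$. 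When $v(y_0)$ is not the minimum, we have $y+\ol y=x\ol x$, so $v(y)=2v(x)$. In that case the $x$-term strictly dominates the minimum, because $2c+2v(x)>v(x)+c$ when $v(x)+c>0$. Hence
$$\omega((x,y)^p)=\min\{v(x)+1+c,\ \ldots\}=\omega+1$$
in all cases.

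\textbf{Saturation.} Suppose $\omega(x,y)>\frac p{p-1}$. I would find $(x',y')$ with $x'=x/p$ and $y'$ obtained by solving the $p$-th power formula. Write $y'=\tfrac12 x'\ol{x'}+y'_0$ with $y'_0$ traceless; this form is forced by the defining relation. The requirement is that $y'_0=y_0/p$ lie in the domain. The valuation conditions reduce, coordinatewise, to the one-dimensional fact in Remark \ref{OKbasis} for $E$ and for the traceless line $\gamma\O_F$, using minimality of the domain cutoffs as in type R1.

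Alternatively, one could use the ordered-basis criterion \cite[Proposition 26.11]{schneiderbook}, taking a basis built from $u_\alpha(b_i,\cdot)$ and $u_\alpha(0,b_j\gamma)$, and check that each basis element has $\omega\le\frac p{p-1}$.
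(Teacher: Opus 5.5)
Your proof is correct. For type R1 it is the paper's argument: Lemma \ref{OKsaturation} is the same computation as your rescaling to Remark \ref{OKbasis}.

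For type R2 you take a genuinely different route.
\begin{itemize}
\item \emph{The paper's route.} It does not check the axioms by hand. For rational $a=\langle\alpha,v\rangle$ (your $c$), it borrows Lazard's argument for unipotent upper-triangular matrices (\cite[Example 23.3]{schneiderbook}). It then reaches irrational $a$ by rational approximation and Lemma \ref{limits}. Saturation comes from Proposition \ref{compatibility}: the factorization $U^a_\alpha=U^a_{\alpha,1}U^a_{\alpha,2}U^a_{\alpha,3}$ into three copies of $\mf m_F^{n_i}$, with respect to which $\omega$ is compatible. This is followed by Lemma \ref{OKsaturation} on each factor and concatenation of ordered bases (Remark \ref{compatible-saturation}).
\item \emph{Your route.} You verify the axioms directly from the group law. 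The key identity is $(x,y)^p=\bigl(px,\,p(\tfrac p2(y+\ol y)+y_0)\bigr)$, together with the case split on whether $v(y_0)\le v(y+\ol y)$. This works uniformly for real $c$, so it needs no limit argument at all. Your explicit $p$th root $\bigl(X/p,\ \tfrac12 (X/p)\ol{(X/p)}+Y_0/p\bigr)$ makes saturation nearly tautological.
\item \emph{What each buys.} The paper's longer route produces explicit coordinates $(x,y,z)$ and an ordered basis (Corollary \ref{basis}). These are reused for the integral model in Claim \ref{R2-LieAlg} and in Remark \ref{qsplit-expl-desc}. Your route is shorter and self-contained.
\end{itemize}

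Two small points on the write-up.
\begin{itemize}
\item Delete the stray first formula involving $\tfrac12(v(y)+2c)$. The correct value is $\min\{v(x)+c,\,v(y)+2c\}$, as you then state.
\item In the saturation step you do not need minimality of the cutoffs. Suppose $\min\{v(X)+c,\,v(Y)+2c\}>\frac p{p-1}$. Then directly:
\begin{itemize}
\item $v(X/p)+c>\frac1{p-1}$;
\item $v(Y_0/p)+2c\ge v(Y)+2c-1>\frac1{p-1}$, using $v(Y_0)\ge v(Y)$ from Lemma \ref{traceless};
\item $v\bigl(\tfrac12 (X/p)\ol{(X/p)}\bigr)+2c=2\bigl(v(X/p)+c\bigr)>\frac1{p-1}$.
\end{itemize}
The last condition is the one your sketch leaves implicit.
\end{itemize}
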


We note that because the jump set of the filtration on $U_\alpha(K)$ is discrete (see e.g. \cite[Fact 6.4.5]{KP}), this minimum is attained, justifying the notation.  

To prove this, recall that $x=x_0+v$ for $v\in V(S\ad)$, where $x_0$ arises from our fixed Chevalley-Steinberg system.

\subsection{Root groups of type R1} \label{R1}

We first prove a useful lemma.

\begin{lem} \label{OKsaturation}
Let $F/\Q_p$ be any finite extension.  Let $a\in \R$ be arbitrary, and let $i$ be the least integer strictly greater than
$$e_F\cdot \left(\frac1{p-1}-a\right),$$
so that $\mf m_F^i=\{x\in F\st v(x)>\frac1{p-1}-a\}$.  Then the function $\omega$ on $\mf m_F^i$ defined by
$$\omega(x)\coloneqq v(x)+a$$
is a saturated $p$-valuation.
\end{lem}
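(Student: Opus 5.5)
The plan is to check the four axioms of a $p$-valuation directly, check the standing finiteness and topology assumptions, and then prove saturation with a valuation-counting argument like the one in Remark \ref{OKbasis}.

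\textbf{The axioms.} The group is the additive group $\mf m_F^i$, so the commutator of any two elements is $0$. Hence $\omega([x,y])=\infty$ and axiom (c) holds trivially. Axiom (b) is the strong triangle inequality: $v(y-x)\geq\min\{v(x),v(y)\}$, and adding $a$ to both sides gives (b). Axiom (d) holds because the "$p$th power" of $x$ in the additive group is $px$, and
$$\omega(px)=v(x)+1+a=\omega(x)+1.$$
Axiom (a) is exactly how $i$ was chosen: $x\in\mf m_F^i$ means $v(x)>\frac1{p-1}-a$, so $\omega(x)>\frac1{p-1}$. The identification of $\mf m_F^i$ with this set holds because $v(F^\times)=\frac1{e_F}\Z$. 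Finally, $\mf m_F^i$ is a compact pro-$p$ group of finite rank, namely a free $\Z_p$-module of rank $[F:\Q_p]$. The $\omega$-filtration is the $\varpi_F$-adic filtration, so $\omega$ defines the topology.

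\textbf{Saturation.} Take $x\in\mf m_F^i$ with $\omega(x)>\frac p{p-1}$. We must show $x=py$ for some $y\in\mf m_F^i$. The only possible candidate is $y=x/p\in F$, since $p$-th powers are injective (Proposition \ref{pth}). We compute
$$v(y)=v(x)-1>\frac p{p-1}-a-1=\frac1{p-1}-a.$$
So $v(y)>\frac1{p-1}-a$, which says exactly that $y\in\mf m_F^i$. Note that, unlike Remark \ref{OKbasis}, no upper bound on $a$ is needed. The reason is that the domain $\mf m_F^i$ is cut out precisely by the condition $\omega>\frac1{p-1}$, and this condition is stable under division by $p$ once $\omega>\frac p{p-1}$.

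\textbf{Cross-check and expected difficulty.} As a sanity check, I would also verify the criterion of \cite[Proposition 26.11]{schneiderbook}. Choose an ordered basis of $\mf m_F^i$ of the form $\varpi_F^{i}b_j$, with the $b_j$ built from powers $\varpi_F^k$, $0\le k<e_F$, times unit representatives. Its $\omega$-values are then $\frac{i+k}{e_F}+a$. Minimality of $i$ gives $\frac{i-1}{e_F}+a\le\frac1{p-1}$, so every such value is at most $\frac1{p-1}+1$, as the criterion requires. There is no real obstacle in this argument. The only point needing care is the bookkeeping that the least integer $i>e_F(\frac1{p-1}-a)$ matches the strict inequality $v(x)>\frac1{p-1}-a$, and that $i$ may be negative, in which case $\mf m_F^i$ is a fractional ideal. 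Neither affects the argument.
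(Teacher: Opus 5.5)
Your proposal is correct and is essentially the paper's own argument. The axioms follow from $v(p)=1$ and the choice of $i$, and saturation comes from the discreteness of $v$ on $F$ together with the minimality of $i$. The paper phrases the saturation step contrapositively: a non-multiple of $p$ lies outside $\mf m_F^{i+e_F}$, so $v(x)\le\frac{i-1}{e_F}+1$, and minimality of $i$ then gives $\omega(x)\le\frac p{p-1}$. You instead check directly that $v(x/p)>\frac1{p-1}-a$. That condition already encodes the same minimality through the description of $\mf m_F^i$ in the statement.
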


\begin{proof}
Recall that our valuation is normalized so that $v(p)=1$, so the only nontrivial part of verifying that this is a $p$-valuation is checking that $\omega(x)>\frac1{p-1}$; this follows from the choice of $i$, as
$$\omega(x)=v(x)+a\geq \frac i{e_F}+a>\frac1{p-1}$$
for any $x\in \mf m_F^i$.  

To see that it is a saturated $p$-valuation, suppose that $x\in \mf m_F^i$ is not a $p$th power (i.e., not a multiple of $p$), which directly implies that $x\notin \mf m_F^{i+e_F}$, or in other words that
$$v(x)<\frac{i}{e_F}+1.$$
Thus, as $v(x)\in \frac1{e_F}\Z$, we know that $v(x)\leq \frac{i-1}{e_F}+1$.  It follows then from our choice of $i$ that
$$\omega(x)=v(x)+a\leq \frac1{p-1}+1=\frac{p}{p-1},$$
as otherwise we could have chosen a smaller value of $i$, so $\omega$ is indeed saturated.
\end{proof}

Suppose now that $\alpha\in \Phi^{\mathrm{nd}}$ is of type R1.  In this case, we have an isomorphism
$$u_\alpha: K_{\tilde\alpha}\xto{\iso} U_\alpha(K),$$
and the groups $U_{\alpha, x, r}$, $U_{\alpha,x,r+}$ are given by
$$U_{\alpha, x, r}=\{u_\alpha(x)\st v(x)+\langle \alpha, v\rangle\geq r\},\quad U_{\alpha, x, r+}=\{u_\alpha(x)\st v(x)+\langle \alpha, v\rangle> r\}.$$
Thus, we have an induced isomorphism $u_\alpha: \mf m_{K_{\tilde\alpha}}^i\xto{\iso} U_{\alpha, x, \frac1{p-1}+}$ where $i$ is the least integer strictly greater than  
$$e_{K_{\tilde\alpha}}\cdot \left(\frac1{p-1}-\langle \alpha,v\rangle\right).$$
Under this isomorphism, the function $\omega$ defined in Proposition \ref{root-group-pval} pulls back to the function on $\mf m_{K_{\tilde\alpha}}^i$ given by
$$\omega(x)=v(x)+\langle \alpha, v\rangle.$$
It is then immediate from the above Lemma \ref{OKsaturation} that $\omega$ is a saturated $p$-valuation on $U_{\alpha,x,r}$.  This completes the proof of Proposition \ref{root-group-pval} when $\alpha$ is of type R1.

\subsection{Root groups of type R2} \label{R2}

Suppose now that $\alpha$ is of type R2.  To simplify notation, write $a\coloneqq \langle \alpha, v\rangle$, as we will be repeatedly writing this.

For convenience, we will work with the root group of $\SU_3$ corresponding to the unique positive nondivisible root (see \cite[Section 3.2]{KP} for details), although the arguments directly generalize to our more general setup.  As always, we assume $p>2$, so we may freely apply Lemma \ref{traceless}.

To fix notation, let $F/\Q_p$ be a finite field extension, $E/F$ a quadratic field extension, and $v$ the usual valuation on $E$ normalized so that $v(p)=1$.  Let $x\mapsto \ol x$ denote the unique nontrivial Galois automorphism of $E/F$.  For any $x,y\in E$, let $u_\alpha(x,y)\in \SU_3(F)\sub \SL_3(E)$ be defined by
$$u_\alpha(x,y)\coloneqq \begin{pmatrix}
1&x&y\\
0&1&\ol x\\
0&0&1
\end{pmatrix}.$$
One can check that $u_\alpha(x,y)\cdot u_\alpha(x',y')=u_\alpha(x+x',y+x\ol{x}'+y')$.  Then 
$$U_\alpha(F)\coloneqq \{u_\alpha(x,y)\st x,y\in F, x\ol x=y+\ol y\}$$
is the unique positive nondivisible root group of $\SU_3(F)$.  To recover our case of interest, take $E=K_{\tilde\alpha}$ and $F=K_{\tilde\beta}$.

\begin{rem} \label{R2-desc}
Under the isomorphism between $U_\alpha(F)$ and our original root group of type R2 in $G(K)$ (in the case when $F=K_{\tilde\beta}$) arising from our Chevalley-Steinberg system, the subgroup of $U_\alpha(F)$ corresponding to $U_{\alpha,x,\frac1{p-1}+}$ will be denoted in this subsection by $U_{\alpha}^a$, and can be described by
\begin{align*}
U_{\alpha}^a&=\left\{u_\alpha(x,y)\in U_\alpha(F)\Biggst v(x)>\frac1{p-1}-a,\quad v(y)>\frac1{p-1}-2a\right\}\\
&=\left\{u_\alpha(x,y)\in U_\alpha(F)\biggst x\in \mf m_E^i, y\in \mf m_E^j\right\},
\end{align*}
where $i$ is the least integer strictly greater than $e_F\cdot \left(\frac1{p-1}-a\right)$ and $j$ is the least integer strictly greater than $e_F\cdot \left(\frac1{p-1}-2a\right)$.  (We remark that $2i\geq j$, so $U_{\alpha}^a$ is easily seen to indeed be a subgroup.  This description follows from \cite[Definition 7.3.3]{KP} and \cite[Lemma 6.1.6]{KP}.)  We also note for future reference that, in fact, this same description applies to $U_{\alpha,x,r+}$ for any $r\geq 0$; one simply replaces $\frac1{p-1}$ by $r$ wherever it appears.

The function $\omega$ in Proposition \ref{root-group-pval} corresponds to the function on $U_{\alpha}^a$ defined by
$$\omega(u_\alpha(x,y))=\min\{v(x)+a,v(y)+2a\}.$$
We claim that it is a $p$-valuation, and will show later in this subsection that it is saturated.
\end{rem}


\begin{lem}
The function $\omega$ defines a $p$-valuation on $U_{\alpha}^a$.
\end{lem}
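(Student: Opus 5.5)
Our plan is to verify the four axioms (a)--(d) directly, using the explicit group law $u_\alpha(x,y)u_\alpha(x',y')=u_\alpha(x+x',\,y+x\ol{x'}+y')$ and the formula $\omega(u_\alpha(x,y))=\min\{v(x)+a,v(y)+2a\}$ from Remark \ref{R2-desc}. Axiom (a) is immediate from the description of $U_\alpha^a$: both $v(x)+a$ and $v(y)+2a$ exceed $\frac1{p-1}$, and $\omega(u)=\infty$ only when $x=y=0$, i.e.\ $u=1$.

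For (b), we first note $u_\alpha(x,y)^{-1}=u_\alpha(-x,\ol y)$. This uses $y+\ol y=x\ol x$: the second coordinate must be $-y+x\ol x=\ol y$. Since $v(\ol y)=v(y)$, we get $\omega(g^{-1})=\omega(g)$. It therefore suffices to bound $\omega(gh)$ from below. For $g=u_\alpha(x,y)$ and $h=u_\alpha(x',y')$, the first coordinate $x+x'$ is handled by the ultrametric inequality. The second coordinate $y+x\ol{x'}+y'$ satisfies
\[v(x\ol{x'})+2a=(v(x)+a)+(v(x')+a)\geq \omega(g)+\omega(h)>\min\{\omega(g),\omega(h)\},\]
using $\omega>0$. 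So (b) holds, and we get the stronger estimate needed for (c) as well.

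For (c), compute the commutator. Since $U_{2\alpha}$ is central in $U_\alpha$, a direct calculation gives
\[[g,h]=u_\alpha\bigl(0,\,x\ol{x'}-x'\ol x\bigr).\]
Hence
\[\omega([g,h])=v(x\ol{x'}-x'\ol x)+2a\geq v(x)+v(x')+2a\geq\omega(g)+\omega(h).\]

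The main point is (d). By induction, $u_\alpha(x,y)^n=u_\alpha\bigl(nx,\,ny+\tbinom n2 x\ol x\bigr)$, so
\[g^p=u_\alpha\Bigl(px,\,py+\tfrac{p(p-1)}2x\ol x\Bigr).\]
The first coordinate has valuation $v(x)+1$. For the second, write $py+\frac{p(p-1)}2 x\ol x=p\bigl(y+\frac{p-1}2(y+\ol y)\bigr)$, using $x\ol x=y+\ol y$. Because $p>2$, the factor $\frac{p-1}{2}$ is a unit, and the expression equals
\[p\Bigl(\tfrac{p+1}2\,y+\tfrac{p-1}2\,\ol y\Bigr).\]
We decompose $y$ into its trace part and its traceless part in the manner of Lemma \ref{traceless}: write $y=s+t$ with $s=\frac{y+\ol y}2\in F$ and $t=\frac{y-\ol y}{2}$ traceless. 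Then
\[\tfrac{p+1}2y+\tfrac{p-1}2\ol y=ps+t.\]

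The worry is that the $s$ term now gets multiplied by an extra $p$, so naively $v$ could jump by more than one. The resolution is that $s=\frac12 x\ol x$, so
\[v(s)+2a=2(v(x)+a)\geq(v(x)+a)+\omega(g)>v(x)+a+\tfrac1{p-1}.\]
We then use the Lemma \ref{traceless} equality $v(s+t)=\min\{v(s),v(t)\}$, and likewise for $v(ps+t)$, and split into cases according to which of $v(x)+a$, $v(s)+2a$, $v(t)+2a$ attains the minimum. When the minimum comes from $v(x)+a$ or $v(t)+2a$, it rises by exactly one. When only the $s$-term is minimal, we have $v(s)+2a>v(x)+a$, which is impossible, since $v(s)+2a\geq 2(v(x)+a)>v(x)+a$. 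So $v(s)+2a$ is never the strict minimum, and in every case $\omega(g^p)=\omega(g)+1$. Checking this case analysis carefully, and in particular that the extra factor $p$ on $s$ never lowers the minimum, is the step I expect to require the most care.
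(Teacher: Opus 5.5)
Your argument is correct, but it proves the lemma by a different route than the paper.

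The paper does not check the axioms by hand. For $a\in\Q$ it regards $U_\alpha^a$ as a group of unipotent upper-triangular matrices in $\GL_3(E)$. It then appeals to the Lazard--Schneider argument (\cite[Example 23.3]{schneiderbook}, \cite[III.3.2.7]{lazard65}) for the weighted function $\min_{i<j}\{v(a_{ij})+(j-i)a\}$, whose only input is that every weighted entry exceeds $\frac1{p-1}$. Irrational $a$ is reached by choosing rational $a'$ converging to $a$ with $U_\alpha^{a'}=U_\alpha^a$ and applying Lemma \ref{limits}.

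Your direct verification from the group law works for every real $a$ at once. So the rational approximation and Lemma \ref{limits} become unnecessary, and the proof is self-contained. The paper's version is shorter by citation, and it shows the statement is an instance of a general fact about weighted filtrations on unipotent matrix groups.

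One simplification in (d): you need neither the relation $x\ol x=y+\ol y$ nor the trace/traceless splitting there. For $x\neq 0$ the binomial term already satisfies $v\bigl(\tbinom p2 x\ol x\bigr)+2a=1+2(v(x)+a)>v(px)+a$. The strict ultrametric inequality then gives $\omega(g^p)=\min\{v(px)+a,\,v(py)+2a\}=\omega(g)+1$ directly. This is exactly the estimate comparing $\tbinom p2X^2$ with $pX$ in the matrix argument the paper cites, specialized to your case.

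Also, in your final case analysis the inequality should read $v(s)+2a<v(x)+a$, not $>$.
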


\begin{proof}
Suppose first that $a\in \Q$.  By a mild generalization of \cite[Example 23.3]{schneiderbook} using the exact same argument, one can construct a $p$-valuation on the standard pro-$p$ Iwahori subgroup of $\GL_n(\O_E)$ given that $ne<p-1$.  (This is also \cite[III.3.2.7]{lazard65}.)  However, the only requirement for that proof to go through for a subgroup of $\GL_n(E)$ consisting of upper triangular unipotent matrices is that $(j-i)a+v(a_{ij})>\frac1{p-1}$ for any $i<j$, i.e., in our case that $v(x)+a>\frac1{p-1}$ and $v(y)+2a>\frac1{p-1}$ for $u_\alpha(x,y)\in U_{\alpha}^a$.  If this condition is satisfied, the claimed function is immediately shown to be a $p$-valuation.  This property follows directly from our above description of $U_{\alpha}^a$.  One can then use Lemma \ref{limits} to allow $a$ to lie outside of $\Q$: letting $i$ and $j$ be the integers in Remark \ref{R2-desc}, any $a'$ satisfying
$$a\geq a'>\max\left\{\frac1{p-1}-\frac{i}{e_E}, \frac12\left(\frac1{p-1}-\frac{j}{e_E}\right)\right\}$$
will have $U_\alpha^{a'}=U_\alpha^a$.  (The definitions of $i$ and $j$ force this interval to be nonempty.)  Now choose a sequence of such $a'$ which are rational and converging to $a$, and then apply Lemma \ref{limits}.
\end{proof}


The following convenient definition is made in \cite[Definition 3.3]{LS}.

\begin{defn}
Let $H$ be a $p$-valuable group and let $(H_1,\ldots, H_s)$ be a list of subgroups such that the multiplication map
$$m:H_1\times\cdots\times H_s\to H$$
is bijective.  We say a $p$-valuation $\omega$ on $H$ is \textbf{compatible} with $(H_1,\ldots H_s)$ if 
$$\omega(h_1\cdots h_s)=\min\{\omega(h_1),\ldots, \omega(h_s)\}$$
for all $h_i\in H_i$.
\end{defn}

\begin{prop} \label{compatibility}
Using Remark \ref{tracelessexistence}, fix a traceless element $\gamma\in \O_E$ such that $\{1,\gamma\}$ is an $\O_F$-basis of $\O_E$.  Consider the following closed subgroups of $U_\alpha^a$:
\begin{align*}
U_{\alpha,1}^a&\coloneqq\left\{u_\alpha\left(x,\frac12x^2\right)\Biggst x\in F, \quad v(x)>\frac1{p-1}-a\right\},\\ U_{\alpha,2}^a&\coloneqq \left\{u_\alpha\left(y\gamma,-\frac12y^2\gamma^2\right)\Biggst y\in F, \quad v(y)>\frac1{p-1}-a-v(\gamma)\right\},\\
U_{\alpha,3}^a&\coloneqq \left\{u_\alpha(0,z\gamma)\Biggst z\in F, \quad v(z)>\frac1{p-1}-2a-v(\gamma)\right\}.
\end{align*}
The multiplication map $U_{\alpha,1}^a\times U_{\alpha,2}^a\times U_{\alpha,3}^a\to U_\alpha^a$ is bijective, and the $p$-valuation $\omega$ on $U_\alpha^a$ is compatible with $(U_{\alpha,1}^a,U_{\alpha,2}^a,U_{\alpha,3}^a)$.
\end{prop}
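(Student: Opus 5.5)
We begin by checking that the three sets are subgroups of $U_\alpha^a$. The multiplication law is $(x,y)(x',y')=(x+x',y+x\ol{x'}+y')$. For $x,x'\in F$ we have $\ol{x'}=x'$, so
$$\tfrac12x^2+xx'+\tfrac12x'^2=\tfrac12(x+x')^2,$$
and $U_{\alpha,1}^a$ is closed under products. For $U_{\alpha,2}^a$ we have $\ol{y'\gamma}=-y'\gamma$, so the second coordinate of a product is
$$-\tfrac12y^2\gamma^2-yy'\gamma^2-\tfrac12y'^2\gamma^2=-\tfrac12(y+y')^2\gamma^2.$$
The group $U_{\alpha,3}^a$ is clearly an additive group.

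Membership in $U_\alpha(F)$ is the norm condition $x\ol x=y+\ol y$. For $U_{\alpha,1}^a$ this reads $x^2=x^2$. For $U_{\alpha,2}^a$ it reads $-y^2\gamma^2=-y^2\gamma^2$, using $\gamma^2\in F$ since $\ol{\gamma^2}=(-\gamma)^2$. For $U_{\alpha,3}^a$ it reads $0=z\gamma+\ol{z\gamma}=0$.

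The valuation bounds also hold. By Lemma~\ref{traceless} we have $v(x)=v(x+0)$ and $v(y\gamma)$ above $\frac1{p-1}-a$, and the second coordinates satisfy $v(\frac12x^2)=2v(x)>\frac2{p-1}-2a\geq \frac1{p-1}-2a$ when $\frac1{p-1}-a>0$. When $\frac1{p-1}-a\le 0$ this bound is weaker, so there I would instead note that $2v(x)+2a>\frac2{p-1}>\frac1{p-1}$ directly. That inequality is exactly what is needed, since $\frac1{p-1}-2a$ is the threshold.

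For bijectivity, given $u_\alpha(X,Y)\in U_\alpha^a$, write $X=x+y\gamma$ with $x,y\in F$ using the $\O_F$-basis $\{1,\gamma\}$ (extended to $E=F\oplus F\gamma$). Compute
$$u_\alpha\bigl(x,\tfrac12x^2\bigr)\,u_\alpha\bigl(y\gamma,-\tfrac12y^2\gamma^2\bigr)=u_\alpha\bigl(x+y\gamma,\ \tfrac12x^2-xy\gamma-\tfrac12y^2\gamma^2\bigr).$$
Since $\ol{x'}=-y\gamma$ here, the middle term is $x\ol{x'}=-xy\gamma$. The remaining factor must then be $u_\alpha(0,w)$ with
$$w=Y-\tfrac12x^2+xy\gamma+\tfrac12y^2\gamma^2.$$
The norm condition on $(X,Y)$ gives $Y+\ol Y=X\ol X=x^2-y^2\gamma^2$, and $\tfrac12x^2-xy\gamma-\tfrac12y^2\gamma^2$ has the same trace. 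Hence $w+\ol w=0$, so $w=z\gamma$ with $z\in F$, because the traceless part of $E$ is $F\gamma$ when $p>2$. This gives uniqueness as well: $x,y$ are determined by $X$, and then $z$ is forced.

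The key step is the compatibility equality
$$\min\{v(X)+a,\ v(Y)+2a\}=\min\{v(x)+a,\ v(y\gamma)+a,\ v(z\gamma)+2a\},$$
since $\omega(u_\alpha(x,\frac12x^2))=\min\{v(x)+a,2v(x)+2a\}$ and similarly for the second factor. By Lemma~\ref{traceless}, $v(X)=\min\{v(x),v(y\gamma)\}$. I would split $Y=\tfrac12(Y+\ol Y)+\tfrac12(Y-\ol Y)$ into its $F$-part $\tfrac12(x^2-y^2\gamma^2)$ and its $F\gamma$-part, which must be $-xy\gamma+z\gamma$. Applying Lemma~\ref{traceless} again gives
$$v(Y)=\min\Bigl\{v\bigl(\tfrac12(x^2-y^2\gamma^2)\bigr),\ v\bigl((z-xy)\gamma\bigr)\Bigr\}.$$
The inequality $\geq$ in the compatibility equality is the easy direction (it is axiom~(b)). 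For $\le$, I would argue by cases. If $v(z\gamma)+2a$ is not the minimum on the right, we are done by the $v(X)$ term. Otherwise $v(z\gamma)<\min\{2v(x),2v(y\gamma)\}$ after shifting by $2a$, so $v((z-xy)\gamma)=v(z\gamma)$ because $v(xy\gamma)\ge\min(2v(x),2v(y\gamma))$ by AM–GM, and then $v(Y)+2a$ attains it.

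The main obstacle is this ultrametric bookkeeping for the $Y$ component, together with the degenerate regimes where $2v(x)+2a$ could be smaller than $v(x)+a$, that is, where $v(x)+a<0$. That cannot happen here, since $v(x)+a>\frac1{p-1}>0$, so $\omega$ of the first two factors is just $v(x)+a$ and $v(y\gamma)+a$.
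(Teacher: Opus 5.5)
Your plan is the paper's plan: write $X=x+y\gamma$, use the norm condition to see that the leftover second coordinate is traceless, then do ultrametric bookkeeping with Lemma~\ref{traceless}. Your compatibility argument is correct. Getting $\geq$ from axiom (b), and proving $\leq$ by splitting on whether $v(z\gamma)+2a<\min\{v(x),v(y\gamma)\}+a$, is a slightly cleaner organization of the paper's case analysis.

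The gap is in surjectivity. What you actually show is that every $u_\alpha(X,Y)\in U_\alpha^a$ factors uniquely as $u_\alpha(x,\frac12x^2)\,u_\alpha(y\gamma,-\frac12y^2\gamma^2)\,u_\alpha(0,z\gamma)$ with $x,y,z\in F$. You never check that these factors lie in $U_{\alpha,1}^a$, $U_{\alpha,2}^a$, $U_{\alpha,3}^a$, i.e.\ that $v(x)>\frac1{p-1}-a$, $v(y\gamma)>\frac1{p-1}-a$ and $v(z\gamma)>\frac1{p-1}-2a$.

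The first two bounds are immediate from Lemma~\ref{traceless}, since $\min\{v(x),v(y\gamma)\}=v(X)$. The third does not follow from the bound on $v(Y)$ alone. Writing $Y=\frac12(x^2-y^2\gamma^2)+z_0\gamma$, one has $z=z_0+xy$, so the cross term $xy$ must be controlled. This is exactly the step the paper spends effort on. It estimates $v(z)\geq\min\{v(z_0),v(x)+v(y)\}$, using $v(z_0\gamma)\geq v(Y)$ and $v(x)+v(y\gamma)>\frac2{p-1}-2a$.

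In your setup there is an even quicker fix. Once you know $g_1\in U_{\alpha,1}^a$ and $g_2\in U_{\alpha,2}^a$, the third factor $u_\alpha(0,z\gamma)=g_2\inv g_1\inv u_\alpha(X,Y)$ lies in the group $U_\alpha^a$, since you have already checked $U_{\alpha,1}^a,U_{\alpha,2}^a\sub U_\alpha^a$. For an element with first coordinate $0$, membership in $U_\alpha^a$ says precisely $v(z\gamma)>\frac1{p-1}-2a$. With that sentence added, your proof is complete.

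A minor point: the case split on the sign of $\frac1{p-1}-a$ in your subgroup check is unnecessary. The bound $2v(x)>\frac2{p-1}-2a\geq\frac1{p-1}-2a$ holds unconditionally.
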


\begin{proof}
It is easy to check that these are indeed all closed subgroups of $U_\alpha^a$, so we will omit this.  In fact, each is isomorphic to $\mf m_F^n$ as a topological group for appropriate values of $n$, via the respective maps $x\mapsto u_\alpha\left(x,\frac12x^2\right)$, $y\mapsto u_\alpha\left(y\gamma,-\frac12y^2\gamma^2\right)$, and $z\mapsto u_\alpha(0,z\gamma)$.  Concretely, if we write $n_1,n_2,n_3$ for the least integers strictly greater than 
$$e_F\cdot \left(\frac1{p-1}-a\right),\qquad e_F\cdot \left(\frac1{p-1}-a-v(\gamma)\right),\qquad e_F\cdot\left( \frac1{p-1}-2a-v(\gamma)\right),$$
respectively, then we have isomorphisms $\mf m_F^{n_i}\xto{\sim} U_{\alpha,i}^a$ for each $1\leq i \leq 3$.

First, we will show that the multiplication map is bijective; it is sufficient to show that the composition $\phi:\mf m_F^{n_1}\times \mf m_F^{n_2}\times \mf m_F^{n_3}\to U_{\alpha}^a$ of the multiplication map with the above three isomorphisms is a bijection.  A brief computation reveals that $\phi$ is explicitly given by
\begin{align*}
\phi:\mf m_F^{n_1}\times \mf m_F^{n_2}\times \mf m_F^{n_3}&\to G\\
(x,y,z)&\mapsto u_\alpha\left(x,\frac12x^2\right)\cdot u_\alpha\left(y\gamma,-\frac12y^2\gamma^2\right)\cdot u_\alpha(0,z\gamma)\\
&\qquad=u_\alpha\left(x+y\gamma, \frac12(x^2-y^2\gamma^2)+\gamma(z-xy)\right).
\end{align*}
Suppose that $\phi(x,y,z)=\phi(x',y',z')$, so that
\begin{align*}
x+y\gamma&=x'+y'\gamma,\\
\frac12(x^2-y^2\gamma^2)+\gamma(z-xy)&=\frac12(x'^2-y'^2\gamma^2)+\gamma(z'-x'y').
\end{align*}
Recall that $\{1,\gamma\}$ is an $\O_F$-basis of $\O_E$ (and in particular an $F$-basis of $E$); from this fact, it follows easily that $(x,y,z)=(x',y',z')$.  This proves injectivity.

For surjectivity, fix any $s,t\in E$ such that $s\ol s=t+\ol t$ which satisfy $v(s)>\frac1{p-1}-a$, $v(t)>\frac1{p-1}-2a$, so $u_\alpha(s,t)\in U_\alpha^a$ is an arbitrary element.  There exist $x,y\in F$ such that $s=x+y\gamma$, which by Lemma \ref{traceless} satisfy the equality
$$\min\{v(x),v(y)+v(\gamma)\}=v(s)>\frac1{p-1}-a.$$
Similarly, there are $w_0,z_0\in F$ such that $t=w_0+z_0\gamma$, which then satisfy
$$\min\{v(w_0), v(z_0)+v(\gamma)\}=v(t)>\frac1{p-1}-2a.$$
We then get directly from $s\ol s=t+\ol t$ that $w_0=\frac12(x^2-y^2\gamma^2)$.  Now, let $z\coloneqq z_0+xy$, and note from our above information on the valuations of our elements that
$$v(x)>\frac1{p-1}-a,\qquad v(y)>\frac1{p-1}-a-v(\gamma),\qquad\text{and } v(z_0)>\frac1{p-1}-2a-v(\gamma),$$
so 
\begin{align*}
v(z)&\geq \min\{v(z_0),v(x)+v(y)\}\\
&>\min\left\{\frac1{p-1}-2a-v(\gamma),\frac2{p-1}-2a-v(\gamma)\right\}\\
&=\frac1{p-1}-2a-v(\gamma).
\end{align*}
Thus, $(x,y,x)\in \mf m_F^{n_1}\times \mf m_F^{n_2}\times \mf m_F^{n_3}$, so we can conclude that
$$\phi(x,y,z)=u_\alpha\left(x+y\gamma, \frac12(x^2-y^2\gamma^2)+\gamma(z-xy)\right)=u_\alpha(s,t),$$
finishing the proof that $\phi:\mf m_F^{n_1}\times \mf m_F^{n_2}\times \mf m_F^{n_3}\to U_\alpha^a$ is bijective.  The multiplication map $U_{\alpha,1}^a\times U_{\alpha,2}^a\times U_{\alpha,3}^a\to U_\alpha^a$ must hence also be bijective.

Next, we must check that $\omega$ is compatible with $(U_{\alpha,1}^a,U_{\alpha,2}^a,U_{\alpha,3}^a)$, i.e., that
$$\omega(g_1g_2g_3)=\min\{\omega(g_1),\omega(g_3),\omega(g_3)\}$$
for any $g_i\in U_{\alpha,i}^a$.  Due to our bounds on the valuations of $(x,y,z)\in \mf m_F^{n_1}\times \mf m_F^{n_2}\times \mf m_F^{n_3}$, one can check that 
$$\omega\left(u_\alpha\left(x, \frac12x^2\right)\right)=v(x)+a,\qquad \omega\left(u_\alpha\left(y\gamma, \frac12y^2\gamma^2\right)\right)=v(y)+v(\gamma)+a,$$ 
$$\omega\left(u_\alpha(0,z\gamma)\right)=v(z)+v(\gamma)+2a.$$
Thus, our requirement translates to checking for all $(x,y,z)\in \mf m_F^{n_1}\times \mf m_F^{n_2}\times \mf m_F^{n_3}$ that
$$\omega(g)=\min\{v(x)+a, v(y)+v(\gamma)+a, v(z)+v(\gamma)+ 2a\},$$
where
$$g=\phi(x,y,z)=u_\alpha\left(x+y\gamma, \frac12(x^2-y^2\gamma^2)+\gamma(z-xy)\right).$$
Since $v(x+\gamma y)=\min\{v(x),v(y)+v(\gamma)\}$ by Lemma \ref{traceless}, this is equivalent to checking that
$$\omega(g)=\min\{a+v(x+y\gamma),2a+v(\gamma z)\}.$$
By definition, we already know that
$$\omega(g)=\min\left\{a+v(x+y\gamma),2a+v\left(\frac12(x^2-y^2\gamma^2)+\gamma(z-xy)\right)\right\}.$$
Assume that 
$$v(\gamma z)\neq v\left(\frac12(x^2-y^2\gamma^2)+\gamma(z-xy)\right),$$
which is the only possible situation where the above two minima could differ; we will prove in this case that $a+v(x+y\gamma)$ attains both minima, hence they are equal.  This assumption implies automatically that
$$v\left(\frac12(x^2-y^2\gamma^2)-xy\gamma\right)\leq v(\gamma z),$$
which we first claim implies $v(x+y\gamma)<a+v(\gamma z)$.  By Lemma \ref{traceless}, as $\gamma^2\in \O_F$, we know that
\begin{align*}
v\left(\frac12(x^2-y^2\gamma^2)-xy\gamma\right)&=\min\left\{v\left(\frac12(x^2-y^2\gamma^2)\right), v(xy\gamma)\right\}\\
&=\min\{2v(x+y\gamma), v(x)+v(y\gamma)\},
\end{align*}
where the last equality follows from $p\neq 2$ and $x^2-y^2\gamma^2=(x+y\gamma)(\ol{x+y\gamma})$.  If $2v(x+y\gamma)\leq v(\gamma z)$, then 
$$v(x+y\gamma)\leq -v(x+y\gamma)+v(\gamma z)<a-\frac1{p-1}+v(\gamma z)<a+v(\gamma z).$$
Similarly, if $v(x)+v(y\gamma)\leq v(\gamma z)$, then 
$$v(x)\leq -v(y\gamma)+v(\gamma z)<a-\frac1{p-1} +v(\gamma z)<a+v(\gamma z).$$
This would imply that
$$v(x+y\gamma)=\min\{v(x),v(y\gamma)\}\leq v(x)<a+v(\gamma z),$$
so we have in all cases from our assumption that $v(x+y\gamma)<a+v(\gamma z)$.  Thus, we need only verify under our assumption that $\omega(g)=a+v(x+y\gamma)$, i.e., that
$$v(x+y\gamma)\leq a+v\left(\frac12(x^2-y^2\gamma^2)+\gamma(z-xy)\right).$$
Again, by Lemma \ref{traceless}, the latter is given by
$$a+\min\left\{v\left(\frac12(x^2-y^2\gamma^2)\right), v(\gamma(z-xy)\right\}=a+\min\{2v(x+y\gamma), v(\gamma(z-xy))\},$$
so it is sufficient to show that $v(x+y\gamma)\leq a+ v(\gamma(z-xy))$.  We know $v(\gamma(z-xy))\geq \min\{v(\gamma z), v(xy\gamma)\}$ and $a+v(\gamma z)>v(x+y\gamma)$, so it is sufficient to check that $v(xy\gamma)+a>v(x+y\gamma)$.  Indeed, 
$$v(xy\gamma)=v(x)+v(y\gamma)>v(x)+\frac1{p-1}-a>v(x)-a$$
so
$$v(x+y\gamma)=\min\{v(x),v(y\gamma)\}<v(xy\gamma)+a$$
as claimed.  This completes the proof that $\omega$ is compatible with $(U_{\alpha,1}^a, U_{\alpha,2}^a, U_{\alpha,3}^a)$.
\end{proof}

\begin{rem} \label{compatible-saturation}
Suppose that $H$ is a $p$-valuable group, and $\omega$ is a $p$-valuation on $H$ compatible with a list of subgroups $(H_1,\ldots, H_s)$ as above.  If $(b_{i,1},\ldots, b_{i,t_i})$ is an ordered basis of $(H_i,\omega|_{H_i})$ for each $i\leq s$, then we can produce an ordered basis for $H$ given by 
$$(b_{1,1},\ldots, b_{1,t_1},\quad b_{2,1},\ldots, b_{2,t_2},\quad \ldots, \quad b_{s,1},\ldots, b_{s,t_s}).$$
In particular, if each $(H_i,\omega|_{H_i})$ is saturated, then $\omega(b_{i,j})\leq \frac p{p-1}$ for all $i\leq s$, $j\leq t_i$, from which it follows that also $(H,\omega)$ is saturated.
\end{rem}

\begin{cor}
The $p$-valuation $\omega$ on $U_\alpha^a$ is saturated.
\end{cor}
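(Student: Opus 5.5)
The plan is to deduce the corollary from Proposition \ref{compatibility} together with Remark \ref{compatible-saturation}. That remark reduces saturation of $(U_\alpha^a,\omega)$ to saturation of each restriction $(U_{\alpha,i}^a,\omega|_{U_{\alpha,i}^a})$ for $i=1,2,3$. By Proposition \ref{compatibility}, $\omega$ is compatible with the triple $(U_{\alpha,1}^a,U_{\alpha,2}^a,U_{\alpha,3}^a)$ and the multiplication map is bijective. Hence ordered bases of the three factors concatenate to an ordered basis of $U_\alpha^a$, and it suffices to show that each factor is saturated.

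For each factor, I would transport $\omega$ along the isomorphisms $\mf m_F^{n_i}\xto{\sim}U_{\alpha,i}^a$ from the proof of Proposition \ref{compatibility}. Using the formulas computed there, $\omega$ pulls back to
$$x\mapsto v(x)+a,\qquad y\mapsto v(y)+\big(v(\gamma)+a\big),\qquad z\mapsto v(z)+\big(v(\gamma)+2a\big)$$
on $\mf m_F^{n_1}$, $\mf m_F^{n_2}$ and $\mf m_F^{n_3}$ respectively. Here $n_i$ is by definition the least integer strictly greater than $e_F(\frac1{p-1}-c_i)$, where $c_1=a$, $c_2=a+v(\gamma)$ and $c_3=2a+v(\gamma)$. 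This is exactly the situation of Lemma \ref{OKsaturation} with the constant $c_i$ in place of $a$. That lemma then gives that each pulled-back function is a saturated $p$-valuation on $\mf m_F^{n_i}$, so each $(U_{\alpha,i}^a,\omega|_{U_{\alpha,i}^a})$ is saturated, and Remark \ref{compatible-saturation} finishes the proof.

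The only step needing care is checking that the pulled-back formulas really are the restrictions of $\omega$, especially on $U_{\alpha,1}^a$ and $U_{\alpha,2}^a$, where the second coordinate also contributes. On $U_{\alpha,1}^a$ we have $\omega=\min\{v(x)+a,\,2v(x)+2a\}$, using $v(\frac12)=0$ since $p>2$. Because $v(x)+a>\frac1{p-1}>0$, the minimum is $v(x)+a$. On $U_{\alpha,2}^a$ the same argument works with $y\gamma$ in place of $x$, using $v(y\gamma)+a>0$. This is precisely where the defining bounds of the $n_i$ enter, and I expect no further obstacle. One should also note that Lemma \ref{OKsaturation} is applied over $F$ rather than $E$, which is fine since the parameters $x,y,z$ range over $F$.
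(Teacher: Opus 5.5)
Your proposal is correct and follows the paper's proof essentially verbatim. You pull $\omega$ back along the isomorphisms $\mf m_F^{n_j}\xto{\sim}U_{\alpha,j}^a$ to shifted valuations, apply Lemma \ref{OKsaturation} three times, and conclude via Proposition \ref{compatibility} and Remark \ref{compatible-saturation}; your explicit check that the second coordinate never attains the minimum on $U_{\alpha,1}^a$ and $U_{\alpha,2}^a$ (because $v(x)+a>\frac1{p-1}>0$) usefully fills in a step the paper leaves as ``one can check.''
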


\begin{proof}
Recall that we have isomorphisms $\phi_j:\mf m_F^{n_k}\xto{\sim} U_{\alpha,j}^a$ for each $1\leq j\leq 3$, and
$$\omega(\phi_1(x))=v(x)+a,\qquad \omega(\phi_2(y))=v(y)+v(\gamma)+a,\qquad \omega(\phi_3(z))=v(z)+v(\gamma)+2a$$
for all $(x,y,z)\in \mf m_F^{n_1}\times \mf m_F^{n_2}\times\mf m_F^{n_3}$.  Recall that $n_1,n_2,n_3$ denote the least integers strictly greater than 
$$e_F\cdot \left(\frac1{p-1}-a\right),\qquad e_F\cdot \left(\frac1{p-1}-a-v(\gamma)\right),\qquad e_F\cdot\left( \frac1{p-1}-2a-v(\gamma)\right),$$
respectively.  Thus, it is immediate from three applications of Lemma \ref{OKsaturation} that $\omega$ restricts to a saturated $p$-valuation on $U_{\alpha, j}^a$ for each $1\leq j \leq 3$, and hence is saturated on $U_\alpha^a$.
\end{proof}

We have now completed the proof of Proposition \ref{root-group-pval}.
We note the following easy corollary of Proposition \ref{compatibility}.

\begin{cor} \label{basis}
Let $(b_1,\ldots, b_\ell)$ be an ordered basis for $\O_F$, and fix a traceless element $\gamma\in \O_E$ such that $\{1,\gamma\}$ is an $\O_F$-basis for $\O_E$.  Fix any uniformizer $\varpi_F\in \O_F$.  Then the following is an ordered basis for $U_\alpha^a$:
\begin{align}
g_{1,i}&\coloneqq u_\alpha\left(\varpi_F^{n_1}b_i, \frac12\varpi_F^{2n_1}b_i^2\right),\notag\\
g_{2,i}&\coloneqq u_\alpha\left(\varpi_F^{n_2}b_i\gamma, -\frac12\varpi_F^{2n_2}b_i^2\gamma^2\right),\tag{$i\leq \ell$}\\
g_{3,i}&\coloneqq u_\alpha(0,\varpi_F^{n_3}b_i\gamma).\notag
\end{align}
\end{cor}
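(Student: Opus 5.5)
The plan is to combine Remark \ref{compatible-saturation} with Proposition \ref{compatibility}, which together reduce the problem to exhibiting an ordered basis of each factor $U_{\alpha,j}^a$. Proposition \ref{compatibility} says that $\omega$ on $U_\alpha^a$ is compatible with $(U_{\alpha,1}^a,U_{\alpha,2}^a,U_{\alpha,3}^a)$. Remark \ref{compatible-saturation} then says that concatenating ordered bases of the three factors gives an ordered basis of $U_\alpha^a$. So it suffices to show, for each $j$, that $(g_{j,1},\ldots,g_{j,\ell})$ is an ordered basis of $(U_{\alpha,j}^a,\omega|_{U_{\alpha,j}^a})$.

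For each $j$ I will transport the question along the topological group isomorphism $\phi_j:\mf m_F^{n_j}\xto{\sim}U_{\alpha,j}^a$ from the proof of Proposition \ref{compatibility}. These are the maps $x\mapsto u_\alpha(x,\frac12x^2)$, $y\mapsto u_\alpha(y\gamma,-\frac12y^2\gamma^2)$ and $z\mapsto u_\alpha(0,z\gamma)$. Since each $\phi_j$ is a continuous homomorphism out of a $\Z_p$-module, $\phi_j(c)^{x}=\phi_j(xc)$ for all $x\in\Z_p$. Hence
$$g_{j,1}^{x_1}\cdots g_{j,\ell}^{x_\ell}=\phi_j\Big(\varpi_F^{n_j}\textstyle\sum_i x_ib_i\Big),$$
and we have $g_{j,i}=\phi_j(\varpi_F^{n_j}b_i)$. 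From the proof of the preceding corollary, $\omega(\phi_j(c))=v(c)+c_j$ with $c_1=a$, $c_2=v(\gamma)+a$ and $c_3=v(\gamma)+2a$.

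Both required properties now follow. First, $(b_i)$ is a $\Z_p$-basis of $\O_F$ and multiplication by $\varpi_F^{n_j}$ is a bijection $\O_F\to\mf m_F^{n_j}$, so $(x_i)\mapsto g_{j,1}^{x_1}\cdots g_{j,\ell}^{x_\ell}$ is a bijection $\Z_p^\ell\to U_{\alpha,j}^a$. Second, by the ordered-basis property of $(b_i)$ from Remark \ref{OKbasis},
$$\omega\Big(\prod_i g_{j,i}^{x_i}\Big)=v(\varpi_F^{n_j})+\min_i\{v(b_i)+v(x_i)\}+c_j=\min_i\{\omega(g_{j,i})+v(x_i)\}.$$

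The only point needing care is the identification of $\omega(g_{j,i})$, together with the fact that $\phi_j$ really is a homomorphism so that $\Z_p$-powers behave as claimed. Both were established in the proof of Proposition \ref{compatibility} and the preceding corollary. I expect no real obstacle; the ordering of the concatenated basis in Remark \ref{compatible-saturation} matches the listed order, first the $g_{1,i}$, then the $g_{2,i}$, then the $g_{3,i}$.
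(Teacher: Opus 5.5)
Your proposal is correct and is essentially the paper's proof. You transport along the isomorphisms $\phi_j:\mf m_F^{n_j}\xto{\sim}U_{\alpha,j}^a$, use Remark \ref{OKbasis} to get ordered bases of each factor, and concatenate via Remark \ref{compatible-saturation} and Proposition \ref{compatibility}; the only difference is that you write out explicitly the ordered-basis computation and the homomorphism property of $\phi_j$, which the paper leaves implicit.
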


\begin{proof}
Since our isomorphisms $\phi_j$ between $\mf m_F^{n_j}$ and each of $U_{\alpha, j}^a$ pull $\omega$ back to mere shifts of the standard $p$-valuation on $\mf m_F^{n_j}$, i.e., we have that
$$\omega(\phi_1(x))=v(x)+a,\qquad \omega(\phi_2(y))=v(y)+v(\gamma)+a,\qquad \omega(\phi_3(z))=v(z)+v(\gamma)+2a,$$
then it follows from Remark \ref{OKbasis} that $(\varpi_F^{n_j}b_1,\ldots, \varpi_F^{n_j}b_\ell)$ is an ordered basis for $\mf m_F^{n_j}$ with respect to the $p$-valuations induced by these three isomorphisms.  Thus, their images 
$$(\phi_j(\varpi_F^{n_j}b_1),\ldots, \phi_j(\varpi_F^{n_j}b_\ell))$$
form an ordered basis of $(U_{\alpha,j}^a,\omega)$ for each $1\leq j\leq 3$, and we have that $\phi_j(\varpi_F^{n_j}b_i)=g_{j,i}$ for $1\leq j\leq 3$, $1\leq i\leq \ell$.  By our previous remark, we can thus concatenate to produce the claimed ordered basis of $U_\alpha^a$.
\end{proof}


\section{\texorpdfstring{$p$}{p}-valuations on tori}

Let $K/\Q_p$ be a finite extension equipped with the usual valuation $v$ normalized by $v(p)=1$, and set $e_K\coloneqq e(K/\Q_p)$.  Recall that if $r\in \Z^+$, we have the group of $r$-units $U_K^{(r)}=1+\mf m_K^r$.


\begin{rem} \label{Gm-log-exp}
It is a standard result that if $r$ is the smallest integer greater than $e_K/(p-1)$, and $n\geq r$, then $U_K^{(n)}=1+\mf m_K^n$ is saturated with $p$-valuation $\omega(x)=v(x-1)+\frac{r-n}{e_K}$.  This follows from the fact that $\log$ and $\exp$ give isomorphisms $U_K^{(m)}\iso \mf m_K^m$ for any $m\geq r$, and $\mf m_K^n$ has saturated $p$-valuation 
$$\omega_0(x)=v(x)+\frac{r-n}{e_K}$$
for any $n$ due to Lemma \ref{OKsaturation}.
\end{rem}

\begin{note}
In particular, the above shows that $U_K^{(r)}$ for $r$ the least integer greater than $e_K/(p-1)$ is saturated by
$$\omega(x)=v(x-1).$$
This is also equal to
$$\omega(x)=\frac1{e_K}\cdot \sup\left\{n\in \Z^+\Bigst  x\in U_K^{(n)}\right\}.$$
\end{note}


Let now $T$ be an algebraic $K$-torus.  We have the following standard definition (see \cite[pp. 86 \& 669]{KP}, for example).

\begin{defn}
The torus $T$ is called \textbf{induced} if there is some $m\in \Z^+$ and there are finite extensions $L_i/K$ for $i\leq m$ such that
$$T\iso \prod_{i=1}^m {R}_{L_i/K}\G_m$$
are isomorphic $K$-tori.

We also say that $T$ is \textbf{weakly induced} if there is a tamely ramified extension $L/K$ such that the base change $T_L$ is an induced torus.
\end{defn}

\begin{rem}
$T$ is weakly induced if and only if there is a basis of $X^*(T)$ on which the wild inertia subgroup of $\Gal(\ol K/K)$ acts via a permutation action.  (See \cite[p. 669]{KP}.)
\end{rem}

\begin{rem}
Let $G$ be a quasisplit connected reductive $K$-group, and $T\leq G$ a maximal $K$-torus contained in a Borel $K$-subgroup.  If $G$ is semisimple, as well as either adjoint or simply connected, then $T$ is an induced torus.  (See \cite[p. 86]{KP}.)
\end{rem}

\begin{notn}
If $T$ is a weakly induced torus, the filtration we use on it is the \textbf{standard filtration} given by $T(K)_0$ the Iwahori subgroup $T(K)^0$, and
$$T(K)_r\coloneqq\{t\in T(K)\st  v(\chi(t)-1)\geq r\text{ for all } \chi\in X^*(T)\}$$
for $r>0$.  The maximal pro-$p$ subgroup of $T(K)$ is given by
$$T(K)_{0+}\coloneqq\{t\in T(K)\st v(\chi(t)-1)>0\text{ for all }\chi\in X^*(T)\}.$$
(For more details, see \cite[pp. 245 \& 692]{KP}.)
\end{notn}

\begin{note}
The appropriate filtration on tori to be using is the minimal congruent filtration introduced by Yu in \cite{yu-smooth_models}.  However, when $T$ is weakly induced, this coincides with the standard filtration by \cite[Corollary B.10.13]{KP}, and is consequently much easier to study; we will work almost exclusively with this case.
\end{note}

\begin{ex}
Say $T=\prod_{i=1}^m{R}_{L_i/K}\G_m$ (so $T$ is induced).  Then we have the equalities
$$T(K)=\prod_{i=1}^m L_i^\times,\qquad T(K)_0=\prod_{i=1}^m \O_{L_i}^\times,\qquad T(K)_{0+}=\prod_{i=1}^m U_{L_i}^{(1)}.$$
\end{ex}


The following is a standard result; the proof is presented for convenience.

\begin{lem} \label{split-torus-sat}
Let $T$ be a split $K$-torus.  Then
$$T(K)_{\frac1{p-1}+}=\{t\in T(K)\st v(\chi(t)-1)>\frac1{p-1} \text{ for all }\chi\in X^*(T)\}$$
has a natural $p$-valuation given by $\omega(t)=\sup\left\{r\in \left(\frac1{p-1},\infty\right)\Bigst t\in T(K)_r\right\}$.  Moreover, $(T(K)_{\frac1{p-1}+},\omega)$ is saturated.
\end{lem}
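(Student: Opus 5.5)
Since $T$ is split, I will fix an isomorphism $T\iso \G_m^d$ coming from a $\Z$-basis $\chi_1,\ldots,\chi_d$ of $X^*(T)$, so that $T(K)\iso (K^\times)^d$ via $t\mapsto(\chi_1(t),\ldots,\chi_d(t))$. The first step is to check that under this identification
$$T(K)_{\frac1{p-1}+}=\prod_{i=1}^d\{u\in K^\times\st v(u-1)>\tfrac1{p-1}\}=\left(U_K^{(r)}\right)^d,$$
where $r$ is the least integer greater than $e_K/(p-1)$. This needs the observation that the condition $v(\chi(t)-1)\geq s$ for all $\chi\in X^*(T)$ is equivalent to the same condition for the basis characters only. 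One direction is trivial. For the other, every $\chi$ is a monomial $\prod\chi_i^{n_i}$ with $n_i\in\Z$, and $\{u\st v(u-1)\geq s\}$ is a subgroup of $K^\times$ for $s>0$, since $v(uu'-1)\geq\min\{v(u-1),v(u'-1)\}$ and $v(u^{-1}-1)=v(u-1)$. By the same reasoning, $\omega(t)=\min_i v(\chi_i(t)-1)$; in particular the supremum in the statement is attained.

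Next I will transport everything to the factors. By Remark \ref{Gm-log-exp} (and the Note following it), $(U_K^{(r)},\omega_1)$ with $\omega_1(u)=v(u-1)$ is a saturated $p$-valuation. The product $\omega(t)=\min_i\omega_1(\chi_i(t))$ is then a $p$-valuation on the product group. Axioms (a) and (b) pass to minima coordinatewise. Axiom (c) is trivial because the group is abelian, so commutators are $1$. Axiom (d) holds coordinatewise, and the minimum of the shifted values is the shifted minimum.

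For saturation I will use compatibility as in Remark \ref{compatible-saturation}. Take $H_i$ to be the $i$th coordinate copy of $U_K^{(r)}$. Multiplication $H_1\times\cdots\times H_d\to T(K)_{\frac1{p-1}+}$ is bijective, and $\omega$ is by construction the minimum of its restrictions, so it is compatible with $(H_1,\ldots,H_d)$. Concatenating ordered bases of the saturated factors then gives an ordered basis with all values at most $\frac p{p-1}$, which proves saturation. Alternatively, one can argue directly: if $\omega(t)>\frac p{p-1}$, then each coordinate is a $p$th power in $U_K^{(r)}$, so $t$ is a $p$th power.

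I do not expect a serious obstacle. The only point needing care is matching the filtration defined through all characters with the coordinatewise one, which the subgroup argument above handles. One should also note that the closedness and finite rank hypotheses are automatic here, since the group is a finite product of copies of $U_K^{(r)}$.
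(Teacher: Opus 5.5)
Your proposal is correct and follows essentially the same route as the paper: identify $T(K)_{\frac1{p-1}+}$ with $(U_K^{(r)})^d$ via a basis of characters, note that $\omega$ becomes $\min_i v(\chi_i(t)-1)$, and deduce saturation from the saturation of each factor $(U_K^{(r)}, v(\cdot-1))$ given by Remark \ref{Gm-log-exp}. Your additional check, that the conditions over all of $X^*(T)$ reduce to conditions over a basis because $\{u\st v(u-1)\geq s\}$ is a subgroup, fills in a step the paper leaves implicit.
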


\begin{proof}
Since $T$ is split, then $T(K)\iso (K^\times)^m$ for some $m$.  Now $T(K)_{\frac1{p-1}+}\iso(U_K^{(r)})^m$ where $r$ is the least integer greater than $e_K/(p-1)$, and the function $\omega$ on $T(K)_{\frac1{p-1}+}$ corresponds to the function given on $(x_i)_{i\leq n}\in (U_K^{(r)})^m$ by 
$$\min_{i\leq m} \frac1{e_K}\cdot \sup\{n\in \Z^+\st x_i\in U_K^{(n)}\}=\min_{i\leq m} v(x_i-1).$$
Since $\omega_i(x_i)\coloneqq v(x_i-1)$ is saturated on $U_K^{(r)}$ for each $i$, then it follows easily that $((U_K^{(r)})^m,\omega)$, and hence $(T(K)_{\frac1{p-1}+},\omega)$, is also saturated.
\end{proof}

\begin{cor} \label{torus-saturation}
Let $T$ be a weakly induced $K$-torus.  Then $T(K)_{\frac1{p-1}+}$ has a natural $p$-valuation given by
$$\omega(t)=\sup\left\{r\in \left(\frac1{p-1},\infty\right)\Biggst t\in T(K)_r\right\}.$$
Moreover, $\left(T(K)_{\frac1{p-1}+},\omega\right)$ is $p$-saturated.
\end{cor}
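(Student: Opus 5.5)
The plan is to reduce to the split case, Lemma \ref{split-torus-sat}, by Galois descent, using Proposition \ref{invariants} and Remark \ref{gen_invariants}.

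First, choose a finite tamely ramified Galois extension $L/K$ such that $T_L$ is induced. Enlarging $L$ by a further tame extension, we may assume $T_L$ is split. Indeed, an induced torus $\prod R_{L_i/L}\G_m$ splits over a Galois closure of the $L_i$, and we will arrange this extension to be tame. If it cannot be, we instead use the permutation-basis description of weak inducedness: then $T$ splits over some Galois $M/K$, and the wild inertia permutes a basis of $X^*(T)$.

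Set $\Gamma=\Gal(M/K)$. By Lemma \ref{split-torus-sat}, applied over $M$, the group $T(M)_{\frac1{p-1}+}$ is saturated with respect to
$$\omega_M(t)=\min_{\chi\in X^*(T)} v(\chi(t)-1),$$
where $v$ is normalized by $v(p)=1$, so no renormalization is needed. The group $\Gamma$ acts on $T(M)$ by group automorphisms, and these preserve $T(M)_{\frac1{p-1}+}$ because $\sigma$ permutes the characters and preserves $v$. Since $T(K)=T(M)^\Gamma$, Remark \ref{gen_invariants} shows that $\omega_M$ restricts to a saturated $p$-valuation on
$$T(M)_{\frac1{p-1}+}\cap T(K)=\Big\{t\in T(K)\Bigst v(\chi(t)-1)>\tfrac1{p-1}\ \forall \chi\in X^*(T)\Big\}.$$

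The remaining, and main, step is to identify this group with $T(K)_{\frac1{p-1}+}$, and the restriction of $\omega_M$ with the function $\omega$ of the statement. In the Notation, the standard filtration $T(K)_r$ for $r>0$ is defined by exactly the condition $v(\chi(t)-1)\geq r$ for all $\chi\in X^*(T)$, with $\chi$ evaluated over $\ol K$. Hence $T(K)_r=T(M)_r\cap T(K)$ holds tautologically for $r>0$. It follows that
$$\omega(t)=\sup\{r \st t\in T(K)_r\}=\min_\chi v(\chi(t)-1)=\omega_M(t).$$

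The genuine subtlety is that the paper notes the correct filtration is Yu's minimal congruent filtration, which agrees with the standard one only because $T$ is weakly induced (\cite[Corollary B.10.13]{KP}). The hypothesis is needed precisely to make this identification legitimate, so that the filtration in the statement is the one relevant to the later Moy-Prasad arguments. With the Notation's definition, however, the descent argument above needs no tameness at all. I expect the only care required to lie in checking that the $\Gamma$-action preserves $\omega_M$, and in confirming that $T(K)_{\frac1{p-1}+}$ is of finite rank and closed, so that Proposition \ref{invariants} applies. Both follow because the subgroup is a fixed-point set of continuous automorphisms.
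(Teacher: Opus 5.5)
Your proposal is correct and is essentially the paper's proof. Like the paper, you take a finite Galois splitting field, apply Lemma \ref{split-torus-sat} there, identify $T(K)_r=T(M)_r\cap T(K)=T(M)_r^{\Gal(M/K)}$, and conclude by Proposition \ref{invariants} and Remark \ref{gen_invariants}; the only difference is that the paper cites \cite[Remark B.6.8]{KP} for that identification, while you read it off the definition. Your opening detour about arranging a tamely ramified splitting field is unnecessary, as you yourself note later: any finite Galois splitting field works.
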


\begin{proof}
Let $E/K$ be a finite Galois extension such that the base change $T_E$ is split.  Note that
$$T(K)_r=T(E)_r\cap T(K)=T_E(E)_r^{\Gal(E/K)}$$
for all $r>0$ (see \cite[Remark B.6.8]{KP} or even \cite[Proposition 12.9.4]{KP}), and similarly $T(K)_{\frac1{p-1}+}=T_E(E)_{\frac1{p-1}+}^{\Gal(E/K)}$.  Thus, the $p$-valuation we have defined on $T(K)_{\frac1{p-1}+}$ is merely the restriction of the one we have defined previously on the split $T_E(E)_{\frac1{p-1}+}$.  This result is now immediate from Lemma \ref{split-torus-sat} and our earlier observation in Proposition \ref{invariants} that if a group is saturated, its subgroup of invariants under a family of homomorphisms is also saturated.
\end{proof}

\section{On the Moy-Prasad filtration} \label{moy-prasad}

In this section, we will prove the following result.  Assume $p>2$.  Let $K/\Q_p$ be any finite extension, and let $G$ be a connected reductive $K$-group.  Fix a maximally $K$-split maximal $K$-torus $T\leq G$, and assume that $T$ is weakly induced.  Let $\mc B(G)$ denote the semisimple Bruhat-Tits building of $G$, and for $r\geq 0$, let $G_{x,r}$ denote the $r$th term of the Moy-Prasad filtration on $G(K)$ associated to any $x\in \mc B(G)$, which is the closed subgroup generated by $T(K)_r$ and $U_{\alpha, x, r}$ for all $\alpha\in \Phi^{\mathrm{nd}}$ (see \cite[Definition 7.3.3]{KP}).  We then also write
$$G_{x,r+}\coloneqq \bigcup_{s>r} G_{x,s}$$
for any $r\geq 0$, as usual.

\begin{thm}
For any $x\in \mc B(G)$, there is a saturated $p$-valuation $\omega$ on $G_{x,\frac1{p-1}+}$ defined by
$$\omega(g)=\min\left\{r>\frac1{p-1}\Biggst g\in G_{x,r}\right\}.$$
\end{thm}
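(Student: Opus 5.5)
We first reduce to the quasisplit case with $x$ in the apartment $\mc A(S)$, and then verify the axioms using an Iwahori-type factorization. Every point of $\mc B(G)$ is $G(K)$-conjugate to a point of $\mc A(S)$, and conjugation carries $G_{x,r}$ to $G_{gx,r}$, so we may assume $x\in\mc A(S)$. To pass from general $G$ to quasisplit $G$, we use unramified descent. Choose a finite unramified Galois extension $L/K$ over which $G$ becomes quasisplit and $T$ stays weakly induced. Then $\mc B(G_K)=\mc B(G_L)^{\Gal(L/K)}$ and $G_{x,r}=G(L)_{x,r}\cap G(K)$, with no renormalization since $L/K$ is unramified. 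Once $\omega$ is known to be a saturated $p$-valuation on $G(L)_{x,\frac1{p-1}+}$, Remark \ref{gen_invariants} applied to the Galois action gives saturation of its restriction to $G(K)_{x,\frac1{p-1}+}$. This restriction is the filtration function for $G(K)$, so the general case follows from the quasisplit one.

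In the quasisplit case we fix $x=x_0+v$ and a chamber order $\Phi^+$. The plan is to establish the factorization
$$\prod_{\alpha\in\Phi^{-,\mathrm{nd}}}U_{\alpha,x,s}\times T(K)_{s}\times\prod_{\alpha\in\Phi^{+,\mathrm{nd}}}U_{\alpha,x,s}\xto{\sim}G_{x,s}$$
for every $s>0$, with any fixed ordering of each product. This holds because $s>0$ and the groups $G_{x,s}$ are the schematic-closure groups of concave functions; the input is \cite[Prop.~7.3.12]{KP} together with the fact that the torus filtration coincides with the minimal congruent filtration for weakly induced $T$. Applying the factorization for all $s$ simultaneously gives
$$\omega\Big(\prod u_\alpha\cdot t\cdot\prod u_\beta\Big)=\min\{\omega(u_\alpha),\omega(t),\omega(u_\beta)\}.$$
In words, $\omega$ is compatible with the list of subgroups in the sense of \cite[Definition 3.3]{LS}, with the root groups from Proposition \ref{root-group-pval} and the torus from Corollary \ref{torus-saturation}.

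Next we check the axioms. Axiom (b) holds because each $G_{x,r}$ is a group. Axiom (c) is the commutator property $[G_{x,r},G_{x,s}]\sub G_{x,r+s}$ of \cite{KP}. Axiom (a) holds by construction. The substantive axiom is (d), $\omega(g^p)=\omega(g)+1$. The inequality $\ge$ follows from the standard fact that $g\in G_{x,r}$ implies $g^p\in G_{x,r+1}$ when $r>\frac1{p-1}$. I would try to derive this from the Moy–Prasad isomorphism and the Hall–Petrescu formula: the commutator terms lie in $G_{x,pr}$ and $G_{x,2r+1}$, and both exceed $r+1$ by a margin.

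For the reverse inequality, I would use the Moy–Prasad isomorphism $G_{x,r}/G_{x,r+}\iso\mf g_{x,r}/\mf g_{x,r+}$, under which the $p$th power map $G_{x,r}/G_{x,r+}\to G_{x,r+1}/G_{x,(r+1)+}$ corresponds to multiplication by $p$ on the Lie algebra side. This last map is injective because $\mf g_{x,r+1}=p\,\mf g_{x,r}$. An alternative is to embed $G$ into some $\GL_n$ over a splitting field so as to use Lazard's argument. I expect axiom (d), and in particular this compatibility of $p$th powers with the filtration when $p$ is small relative to the ramification, to be the main obstacle.

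Finally, for saturation we combine the compatibility from the factorization with ordered bases. For each factor we take an ordered basis: from Lemma \ref{OKsaturation} in type R1, from Corollary \ref{basis} in type R2, and from Corollary \ref{torus-saturation} for the torus. Each of these has $\omega$-values in $(\frac1{p-1},\frac p{p-1}]$. By Remark \ref{compatible-saturation} their concatenation is an ordered basis of $G_{x,\frac1{p-1}+}$, and the criterion of \cite[Proposition 26.11]{schneiderbook} then gives saturation.
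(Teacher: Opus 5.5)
Your descent step, the compatibility of $\omega$ with the Iwahori factorization, and the saturation argument by concatenating factor ordered bases all match the paper. The gap is axiom (d): you flag it as the main obstacle but never establish it.

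For $\omega(g^p)\le\omega(g)+1$ you use the claim that the Moy--Prasad isomorphism intertwines the $p$th power map with multiplication by $p$. In this paper that statement is Corollary \ref{MP-cor}, and it is \emph{deduced from} the theorem you are proving, via $\log$ and Proposition \ref{log-exp}. Invoking it here is therefore circular. Even the fact that $g\mapsto g^p$ induces a well-defined map $G_{x,r}/G_{x,r+}\to G_{x,r+1}/G_{x,(r+1)+}$ is part of what must be shown.

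The opposite inequality $G_{x,r}^p\sub G_{x,r+1}$ is also something to be proved, not quoted, at this generality (arbitrary ramification, only $p>2$). The paper obtains it from Lazard's matrix estimate. Your Hall--Petrescu sketch presupposes it: putting $c_i^{\binom pi}$, $2\le i\le p-1$, into $G_{x,2r+1}$ already uses the $p$th power property at level $ir\ge 2r$.

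The missing idea is an induction on the level. Once it is in place, the Moy--Prasad isomorphism is not needed.
\begin{itemize}
\item Write $g=h_1\cdots h_n$ in Iwahori form at level $r=\omega(g)$. Apply the two-variable Hall--Petrescu formula repeatedly: first to $h_1$ and $h_2\cdots h_n$, then to $h_2$ and $h_3\cdots h_n$, and so on. Since $\gamma_i(G_{x,r})\sub G_{x,ir}$ by the commutator property, this gives $g^p=h_1^p\cdots h_n^p\,d$. Here $d$ is a product of inverses of elements $c_p\in G_{x,pr}$ and $c_i^{\binom pi}$ with $c_i\in G_{x,ir}$, $2\le i\le p-1$.
\item The elements $c_p$ lie in $G_{x,(r+1)+}$ precisely because $r>\frac1{p-1}$, since then $pr>r+1$.
\item For the $c_i^{\binom pi}$: if $r>1$, then $ir\ge 2r>r+1$ suffices. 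If $r\le 1$, write $c_i^{\binom pi}=(c_i^m)^p$ with $m=\binom pi/p$. Apply the implication $h\in G_{x,s}\Rightarrow h^p\in G_{x,s+1}$ at $s=ir\ge 2r$ to get $c_i^{\binom pi}\in G_{x,ir+1}\sub G_{x,(r+1)+}$.
\item Prove both statements simultaneously by induction on the least $k\ge 0$ with $2^kr>1$. Then $d\in G_{x,(r+1)+}$.
\item Compatibility, together with the factorwise identity $\omega(h_i^p)=\omega(h_i)+1$ (Proposition \ref{root-group-pval}, Corollary \ref{torus-saturation}), gives $\omega(h_1^p\cdots h_n^p)=r+1$. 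Hence $\omega(g^p)=r+1$.
\end{itemize}

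With this repair your route is genuinely different from the paper's. The paper never checks (c) or (d) intrinsically. For rational $x-x_0$ it conjugates $G_{x,\frac1{p-1}+}$ by $t=\mu(c)$ into a congruence subgroup of a hyperspecial parahoric over a splitting field $E$ (Lemma \ref{conjlemma}). There Lemma \ref{congruence-pval} supplies all the axioms at once. It then reaches arbitrary $x$ through Lemmas \ref{density} and \ref{limits}. Your version avoids the auxiliary field and the approximation and limit arguments. The price is relying on the Moy--Prasad commutator estimate from \cite{KP} and on the Hall--Petrescu induction above.
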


To prove this, we will first consider the quasisplit case, and then proceed via unramified descent to show the general case.

\subsection{The quasisplit case}\label{qsplit-case}

In this section, we will mimic the style of argument used in \cite[Section 3]{LS}.  As always, we assume $p>2$.

Suppose that $G$ is a quasisplit connected reductive $K$-group for $K/\Q_p$ a finite extension.  Fix any $x\in \mc B(G)$.  Let $T\leq G$ be a maximally $K$-split maximal $K$-torus, $S\leq T$ its maximal $K$-split subtorus, and let $B\leq G$ be a Borel $K$-subgroup containing $T$.  Recall that we have fixed an absolutely special vertex $x_0$ of the semisimple apartment $\mc A(S)$ associated to $S$ arising from our Chevalley-Steinberg system $X$; we may indeed assume without loss of generality that $x$ lies in the apartment $A(S)$, as we could merely change our tori accordingly.  Moreover, we may also assume without loss of generality that $x$ lies in the dominant base chamber $C\sub \mc A(S)$ associated to $B$ and $x_0$.

We now recall the following fact from \cite[Lemma 3.1]{LS}:

\begin{lem} \label{congruence-pval}
Let $E/\Q_p$ be a finite extension with $e_E=e(E/\Q_p)$, and suppose $\mc G$ is an $\O_E$-subgroup scheme of $\GL_N$ for some $N\in \Z^+$.  For $r\in \Z^+$, consider the congruence subgroup $K_r$ defined by
$$K_r\coloneqq \ker\left(\mc G(\O_E)\to \mc G(\O_E/\mf m_E^r)\right).$$
Then for $r>\frac {e_E}{p-1}$, the expression
$$\omega(g)=\frac1{e_E}\cdot \sup\{n\in \Z^+\st g\in K_n\}$$
defines a $p$-valuation on $K_r$.
\end{lem}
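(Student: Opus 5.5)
We will verify the four axioms directly from the matrix description. Write $\mathrm{M}_N(\O_E)$ for integral matrices and note that $g\in K_n$ if and only if $g=1+\varpi_E^n X$ with $X\in \mathrm{M}_N(\O_E)$ (as $\mc G$ is a closed subscheme of $\GL_N$, membership in $K_n$ only depends on the congruence $g\equiv 1$ modulo $\mf m_E^n$ inside $\GL_N(\O_E)$). Set $\nu(g)=v(g-1)$, computed as the minimum of the valuations of the entries of $g-1$; then $\omega(g)=\nu(g)$, since $\nu$ takes values in $\frac1{e_E}\Z$. In particular the supremum is attained for $g\neq 1$, and $\omega(1)=\infty$.

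Axiom (a) is immediate: $g\in K_r$ gives $\omega(g)\geq r/e_E>\frac1{p-1}$. For (b), write $g=1+X$ and $h=1+Y$. Then $g^{-1}=\sum_{k\geq0}(-X)^k$ converges with $v(g^{-1}-1)=v(X)$, and $g^{-1}h-1=g^{-1}(Y-X)$. Hence $\nu(g^{-1}h)\geq \min\{\nu(g),\nu(h)\}$, because $g^{-1}$ is integral. For (c), use the identity
$$gh-hg=XY-YX,$$
which gives
$$[g,h]-1=g^{-1}h^{-1}(gh-hg)=g^{-1}h^{-1}(XY-YX),$$
and therefore $\nu([g,h])\geq \nu(g)+\nu(h)$. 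None of these steps uses the bound on $r$ beyond (a).

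The main step is (d), and it is the only place where $r>\frac{e_E}{p-1}$ matters. With $g=1+X$ we have
$$g^p-1=pX+\sum_{k=2}^{p-1}\binom pk X^k+X^p.$$
The middle terms have valuation at least $1+2v(X)>1+v(X)$. For the last term,
$$v(X^p)\geq p\,v(X)=v(X)+(p-1)v(X)>v(X)+1,$$
using $v(X)>\frac1{p-1}$. Hence $g^p-1=pX+(\text{terms of valuation}>1+v(X))$. Choose an entry of $X$ attaining $v(X)$; the corresponding entry of $pX$ has valuation exactly $1+v(X)$ and cannot be cancelled. Therefore $\nu(g^p)=\nu(g)+1$ exactly.

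Finally, I would note that $\omega$ takes values in $(0,\infty)$ away from $1$ and that $K_r$ is indeed a group (closed under products by (b)-type estimates). This completes the verification that $\omega$ is a $p$-valuation on $K_r$.
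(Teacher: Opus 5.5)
Your proof is correct. The reduction to the principal congruence subgroup of $\GL_N(\O_E)$ uses only that $\mc G(R)\to\GL_N(R)$ is injective for every $R$, and the four matrix estimates are sound, in particular the binomial expansion in (d), which is where $r>\frac{e_E}{p-1}$ enters.

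The paper gives no argument of its own here and simply quotes \cite[Lemma 3.1]{LS}. That cited result amounts to this standard verification: restrict the $p$-valuation $g\mapsto v(g-1)$ on $1+\mf m_E^r\mathrm{M}_N(\O_E)$ to $K_r$, compare \cite[III.3.2.7]{lazard65}. So your route is essentially the same.
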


As in \cite[Section 3]{LS} and \cite[Section 5]{derivedinduction}, we apply this to the following $\mc G$.  Assume that $E/K$ is a yet unspecified extension for which $G_E$ is split, so that $T_E$ is a split torus.  Then our absolutely special vertex $x_0$ has hyperspecial image in the apartment associated to $T_E$ in the semisimple Bruhat-Tits building of $G_E$ (see \cite[Section 7.11]{KP}).  Let $\mc G=\mc G_{x_0}^0$ denote the Bruhat-Tits parahoric group scheme over $\mc O_E$ associated to the image of $x_0$ in $\mc B(G_E)$ (see \cite[Axiom 4.1.20]{KP}).

Our fixed Chevalley-Steinberg system $X$ induces a filtration on the absolute root groups (relative to $E$) by
$$U_{\tilde\alpha,x_0,r}^E\coloneqq \{u_{\tilde\alpha}(x) \st x\in E, v(x)\geq r\}$$
for any $\tilde\alpha\in \tilde\Phi$ and any $r\geq 0$.

Let $K_r$ denote the $r$th congruence subgroup of $\mc G(\O_E)$, so $K_r$ is generated by $U_{\tilde\alpha, r/e_E}^E$ for $\tilde\alpha\in \tilde\Phi$ and $T(E)_{r/e_E}$.  More precisely, by \cite[Lemma 5.1]{derivedinduction}, we have that $K_r=(G_E)_{x_0,r/e_E}$ is the $(r/e_E)$th term in the Moy-Prasad filtration of $G_E(E)$ associated to $x_0\in \mc B(G_E)$, and the multiplication map
$$\prod_{\tilde\alpha\in \tilde\Phi^-} U_{\tilde\alpha, x_0,r/e_E}^E\times T(E)_{r/e_E}\times \prod_{\tilde\alpha\in \tilde\Phi^+} U_{\tilde\alpha, x_0,r/e_E}^E\to K_r$$
is a homeomorphism for any $r\in \Z^+$.

We now consider the Moy-Prasad filtration group $G_{x,\frac1{p-1}+}$.  We have an Iwahori factorization by \cite[Proposition 13.2.5]{KP}; that is, the multiplication map gives a homeomorphism
$$\prod_{\alpha\in \Phi^{-,\mathrm{nd}}} U_{\alpha,x,\frac1{p-1}+}\times T(K)_{\frac1{p-1}+}\times\prod_{\alpha\in \Phi^{+,\mathrm{nd}}}U_{\alpha,x,\frac1{p-1}+}\to G_{x,\frac1{p-1}+},$$
where the factors in each product are taken in an arbitrary but fixed order.

Recall that we can write $x=x_0+v$ for a unique $v\in V(S\ad)=X_*(S\ad)\otimes_\Z \R$, which as we are assuming $x$ lies in the dominant base chamber $C$ means that $v$ is dominant.  To simplify notation, we will write $M_\Q\coloneqq M\otimes_\Z \Q$ for any $\Z$-module $M$.

\begin{lem} \label{conjlemma}
Suppose that $G$ is quasisplit and $T$ is weakly induced.  Assume that $v\in X_*(S\ad)_\Q$.  Then there exists a pair $(E,t)$ consisting of a finite extension $E/K$ such that $G_E$ splits and an element $t\in T(E)$ such that $tG_{x,\frac1{p-1}+}t\inv\sub K_r$, where $r$ is the smallest integer $>e_E/(p-1)$ for $e_E=e(E/\Q_p)$.
\end{lem}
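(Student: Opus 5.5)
The plan is to take $t$ to be the image of a uniformizer of a suitable ramified extension under a cocharacter that represents $v$. Conjugation by such a $t$ shifts the valuation of each root-group coordinate by exactly $\langle\alpha,v\rangle$, which moves the filtration at $x$ to the filtration at $x_0$. By the Iwahori factorization, it then suffices to check that each factor of $G_{x,\frac1{p-1}+}$ lands in $K_r$ after conjugation.

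\textbf{Step 1: a cocharacter representing $v$.} Since $v\in X_*(S\ad)_\Q\sub X_*(T\ad)_\Q$ and the map $X_*(T)_\Q\to X_*(T\ad)_\Q$ is surjective, we can choose $N\in\Z^+$ and $\lambda\in X_*(T)$ whose image in $X_*(T\ad)$ is $Nv$. Changing $\lambda$ by a central cocharacter does not affect pairings with roots, so $\langle\tilde\alpha,\lambda\rangle=N\langle\tilde\alpha,v\rangle$ for all $\tilde\alpha\in\tilde\Phi$. Because $v$ is $\Theta$-invariant, every $\tilde\alpha$ lying over $\alpha\in\Phi$ satisfies $\langle\tilde\alpha,v\rangle=\langle\alpha,v\rangle$.

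\textbf{Step 2: choice of $E$.} Choose a finite extension $E/K$ that splits $G$ (hence $T$), contains every $K_{\tilde\alpha}$, and has $N\mid e_E$. By enlarging the ramification further, also arrange that $\langle\alpha,v\rangle\in\frac1{e_E}\Z$ for all $\alpha$; this is possible since $v$ is rational. Let $\varpi_E$ be a uniformizer and set $t\coloneqq\lambda(\varpi_E^{\pm e_E/N})\in T(E)$, with the sign fixed below. Then $v(\tilde\alpha(t))=\pm\langle\alpha,v\rangle$, and $tu_{\tilde\alpha}(y)t\inv=u_{\tilde\alpha}(\tilde\alpha(t)y)$. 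Fix the sign so that a coordinate $y$ with $v(y)+\langle\alpha,v\rangle>\frac1{p-1}$ is sent to one of valuation $v(y)+\langle\alpha,v\rangle$.

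\textbf{Step 3: each factor lands in $K_r$.}
\begin{itemize}
\item For $\alpha$ of type R1, Remark \ref{Chev-Stein-isoms} writes $u_\alpha(x)=\prod_\sigma u_{\sigma\tilde\alpha}(\sigma x)$ with $v(\sigma x)=v(x)$. Conjugating by $t$ gives coordinates of valuation $v(x)+\langle\alpha,v\rangle>\frac1{p-1}$. These valuations lie in $\frac1{e_E}\Z$, and $r$ is the least integer $>e_E/(p-1)$, so each coordinate has valuation $\geq r/e_E$. Hence each factor lies in $U^E_{\sigma\tilde\alpha,x_0,r/e_E}\sub K_r$.
\item For $\alpha$ of type R2, $u_\alpha(x,y)$ is a product of terms $u_{\sigma\tilde\alpha}(\sigma x)$, $u_{\sigma\tilde\alpha'}(\sigma\ol x)$ and $u_{\sigma\tilde\beta}(\sigma y)$. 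Since $\langle\tilde\beta,v\rangle=2\langle\alpha,v\rangle$, the description in Remark \ref{root-filts} gives the bounds $v(x)+\langle\alpha,v\rangle>\frac1{p-1}$ and $v(y)+2\langle\alpha,v\rangle>\frac1{p-1}$. The same integrality argument then applies.
\item For the torus factor, $t$ commutes with $T$. Since $T(K)_{\frac1{p-1}+}=T(E)_{\frac1{p-1}+}\cap T(K)$ and values lie in $\frac1{e_E}\Z$, we get $T(K)_{\frac1{p-1}+}\sub T(E)_{r/e_E}\sub K_r$.
\end{itemize}

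\textbf{Step 4: conclusion.} Since $K_r$ is a group and the Iwahori factorization expresses $G_{x,\frac1{p-1}+}$ as a product of these factors, it follows that $tG_{x,\frac1{p-1}+}t\inv\sub K_r$.

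The main obstacle is the bookkeeping in Steps 1 and 2. One must lift $Nv$ from $X_*(S\ad)$ to a genuine cocharacter of $T$ over $E$, and choose $e_E$ divisible enough that all the relevant valuations lie in $\frac1{e_E}\Z$. Only then does the strict inequality $>\frac1{p-1}$ upgrade to $\geq r/e_E$.
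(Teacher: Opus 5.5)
Your proposal is correct and follows the paper's proof. The paper likewise sets $t=\mu(c)$, where $\mu$ is a scaled cocharacter representing $v$ and $c\in E^\times$ has valuation equal to the reciprocal of the scaling factor. It then checks $tG_{x,\frac1{p-1}+}t\inv\sub K_r$ on each Iwahori factor, and your integrality step spells out what it calls ``immediate''. Two differences are harmless: the paper lifts $v$ to a $\Theta$-invariant $\mu\in X_*(S)$, while your arbitrary lift in $X_*(T)$ works because roots factor through $T\ad$; and your extra condition $\langle\alpha,v\rangle\in\frac1{e_E}\Z$ is automatic, since $\langle\alpha,v\rangle=v(\tilde\alpha(t))$ with $\tilde\alpha(t)\in E^\times$.
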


\begin{proof}
This proof is directly inspired by that of \cite[Lemma 3.2]{LS}.  Pick any lift $\mu_0\in X_*(S)_\Q$ of $v$, so there exists some $n\in \Z^+$ such that $\mu\coloneqq n\mu_0\in X_*(S)=X_*(T)^\Theta$ by clearing denominators.  Pick now $E/K$ large enough such that $G_E$ splits and there is some $c\in E^\times$ with $v(c)=\frac 1n$.  We claim that $t\coloneqq \mu(c)\in T(E)$ suffices.

Indeed, recall that the multiplication map gives a homeomorphism
$$\prod_{\tilde\alpha\in \tilde\Phi^-} U_{\tilde\alpha, x_0,r/e_E}^E\times T(E)_{r/e_E}\times \prod_{\tilde\alpha\in \tilde\Phi^+} U_{\tilde\alpha, x_0,r/e_E}^E\to K_r$$
for any $r\geq 0$.  For each $\alpha\in \Phi$, choose $\tilde\alpha\in \tilde\Phi$ lying above $\alpha$, and if $\alpha$ is of type R2, we also pick $\tilde\alpha'$ above $\alpha$ and $\tilde\beta$ above $2\alpha$ exactly as in Remark \ref{Chev-Stein-isoms}.

Using our Iwahori factorization
$$\prod_{\alpha\in \Phi^{-,\mathrm{nd}}} U_{\alpha,x,\frac1{p-1}+}\times T(K)_{\frac1{p-1}+}\times\prod_{\alpha\in \Phi^{+,\mathrm{nd}}}U_{\alpha,x,\frac1{p-1}+}\to G_{x,\frac1{p-1}+},$$
and the analogous one above for $K_r$, we see that $tG_{x,\frac1{p-1}+}t\inv\sub K_r$ can be checked on each Iwahori factor.
\begin{enumerate}[{(i)}]
\item For $\alpha\in \Phi^\mathrm{nd}$ of type R1, 
\begin{align*}
tU_{\alpha,x,\frac1{p-1}+}t\inv\sub \prod_{\sigma\in \Theta/\Theta_{\tilde\alpha}}U_{\sigma\tilde\alpha,x_0,r/e_E}^E
&\iff (\sigma\tilde\alpha)(t)\mf m_{K_{\tilde\alpha}}^{m_\alpha}\sub \mf m_E^r\qquad \forall \sigma\in \Theta
\end{align*}
where $m_\alpha\in \Z$ is the least integer strictly greater than 
$$e_{K_{\tilde\alpha}}\cdot \left(\frac1{p-1}-\langle \alpha, v\rangle\right).$$
Since $t=\mu(c)$, and $\mu_0=\frac1n \mu$ is Galois-invariant, note that 
\begin{align*}
v((\sigma\tilde\alpha)(t))=v((\sigma\tilde\alpha)(\mu(c)))&=v(c^{\langle \sigma\tilde\alpha, \mu\rangle})=\langle \sigma\tilde\alpha, \mu\rangle \cdot v(c)\\
&=\langle \sigma\tilde\alpha, \mu_0\rangle=\langle \tilde\alpha, \sigma\inv\mu_0\rangle=\langle \tilde\alpha, \mu_0\rangle=\langle \alpha, v\rangle.
\end{align*}
Thus, $(\sigma\tilde\alpha)(t)m_{K_{\tilde\alpha}}^{m_\alpha}\sub m_E^r$ is immediate from the description of $m_\alpha$ and $r$ being the least integers strictly greater than 
$$e_{K_{\tilde\alpha}}\cdot \left(\frac1{p-1}-\langle \alpha, v\rangle\right)\qquad\mathrm{and}\qquad e_E\cdot \frac1{p-1},$$
respectively.
\item For $\alpha\in \Phi^\mathrm{nd}$ of type R2,
\begin{align*}
tU_{\alpha,x,\frac1{p-1}+}t\inv&\sub \prod_{\sigma\in \Theta/\Theta_{\tilde\beta}}U_{\sigma\tilde\alpha,x_0,r/e_E}^E\cdot U_{\sigma\tilde\alpha',x_0,r/e_E}^E\cdot U_{\sigma\tilde\beta,x_0,r/e_E}^E\qquad\\
\iff&(\sigma\tilde\alpha)(t)\mf m_{K_{\tilde\alpha}}^{m_\alpha}, \quad (\sigma\tilde\alpha')(t)\mf m_{K_{\tilde\alpha}}^{m_\alpha}, \quad (\sigma\tilde\beta)(t)\mf m_{K_{\tilde\alpha}}^{n_\alpha}\quad \sub \mf m_E^r\qquad \forall \sigma\in \Theta,
\end{align*}
where $m_\alpha$ and $n_\alpha$ are the least integers strictly greater than 
$$e_{K_{\tilde\alpha}}\cdot \left(\frac1{p-1}-\langle \alpha, v\rangle\right)\qquad\mathrm{and}\qquad e_{K_{\tilde\alpha}}\cdot \left(\frac1{p-1}-\langle 2\alpha, v\rangle\right),$$
respectively.  Once again, we can check from $t=\mu(c)$ that
$$v((\sigma\tilde\alpha)(t))=v((\sigma\tilde\alpha')(t))=\langle \alpha, v\rangle$$
and
$$v((\sigma\tilde\beta)(t))=\langle 2\alpha, v\rangle,$$
so as in the last item, our requirement that
$$(\sigma\tilde\alpha)(t)\mf m_{K_{\tilde\alpha}}^{m_\alpha}, \quad (\sigma\tilde\alpha')(t)\mf m_{K_{\tilde\alpha}}^{m_\alpha}, \quad (\sigma\tilde\beta)(t)\mf m_{K_{\tilde\alpha}}^{n_\alpha}\quad \sub \mf m_E^r\qquad \forall \sigma\in \Theta$$
is immediately satisfied as $r$ is the least integer strictly greater than $e_E\cdot \frac1{p-1}$.
\item For the torus, since $T(E)$ is abelian and $T(K)_{\frac1{p-1}+}\sub T(E)_{\frac1{p-1}+}$, it is immediate that
$$tT(K)_{\frac1{p-1}+}t\inv=T(K)_{\frac1{p-1}+}\sub T(E)_{r/e_E}.$$
\end{enumerate}
Thus, $t=\mu(c)$ indeed satisfies the requirement that $tG_{x,\frac1{p-1}+}t\inv \sub K_r$.
\end{proof}

\begin{rem}
Whenever $v\in X_*(S\ad)_\Q$, this result gives us a $p$-valuation $\omega$ on $G_{x,\frac1{p-1}+}$ by restricting and conjugating the one from $K_r$ for $r$ in Lemma \ref{congruence-pval}.
\end{rem}

Before proving the next result, we need a technical lemma.

\begin{lem} \label{density}
For each $x\in \mc A(S)$, there is a subset $Q$ of 
$$x_0+X_*(S\ad)_\Q\sub \mc A(S)$$
either containing $x$ or which has $x$ as a limit point, and which satisfies the requirement that $G_{x,\frac1{p-1}+}=G_{x',\frac1{p-1}+}$ for all $x'\in Q$.
\end{lem}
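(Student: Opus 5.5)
The plan is to show that the group $G_{x,\frac1{p-1}+}$ depends on $x=x_0+v$ only through finitely many discrete data. These data are locally constant on an open region whose closure contains $x$, and that region can be reached by rational points. By the Iwahori factorization (\cite[Proposition 13.2.5]{KP}),
$$\prod_{\alpha\in \Phi^{-,\mathrm{nd}}} U_{\alpha,x,\frac1{p-1}+}\times T(K)_{\frac1{p-1}+}\times\prod_{\alpha\in \Phi^{+,\mathrm{nd}}}U_{\alpha,x,\frac1{p-1}+}\to G_{x,\frac1{p-1}+}$$
is a bijection. The torus factor $T(K)_{\frac1{p-1}+}$ does not depend on $x$ at all. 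It therefore suffices to find $Q$ such that $U_{\alpha,x',\frac1{p-1}+}=U_{\alpha,x,\frac1{p-1}+}$ for every $\alpha\in\Phi^{\mathrm{nd}}$ and every $x'\in Q$.

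By Remark \ref{root-filts} and Remark \ref{R2-desc}, each root subgroup $U_{\alpha,x,\frac1{p-1}+}$ is determined by integers. For $\alpha$ of type R1, it is determined by the least integer $i_\alpha(x)$ strictly greater than $e_{K_{\tilde\alpha}}\bigl(\frac1{p-1}-\langle\alpha,v\rangle\bigr)$. For $\alpha$ of type R2, it is determined by $i_\alpha(x)$ together with the least integer $j_\alpha(x)$ strictly greater than $e_{K_{\tilde\alpha}}\bigl(\frac1{p-1}-2\langle\alpha,v\rangle\bigr)$. (The normalizing ramification index is whichever one the paper uses for the valuation on $K_{\tilde\alpha}$; this choice does not affect the argument.) Each function $v\mapsto \lceil c-\lambda(v)\rceil^{+}$, where $\lceil y\rceil^{+}$ denotes the least integer $>y$, is locally constant exactly off the hyperplanes where $c-\lambda(v)\in\Z$. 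Moreover, it is constant on the half-open region where $c-\lambda(v)\in[k-1,k)$.

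The construction then goes as follows. For each relevant linear form $\lambda=e_{K_{\tilde\alpha}}\alpha$ or $\lambda=2e_{K_{\tilde\alpha}}\alpha$, with constant $c=e_{K_{\tilde\alpha}}/(p-1)$, set $k_\lambda=\lceil c-\lambda(v)\rceil^{+}$. Then $x'=x_0+v'$ has the same groups whenever $k_\lambda-1\le c-\lambda(v')<k_\lambda$ for all $\lambda$, which is equivalent to $c-k_\lambda<\lambda(v')\le c-k_\lambda+1$. The point $v$ itself satisfies these finitely many inequalities. Let $P$ be this region. Its strict part
$$P^\circ=\{v':\ c-k_\lambda<\lambda(v')<c-k_\lambda+1\ \text{for all }\lambda\}$$
is a finite intersection of open half-spaces. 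The closure of $P^\circ$ contains $v$: for a boundary constraint $\lambda(v)=c-k_\lambda+1$, one moves slightly in a direction decreasing every such $\lambda$.

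The main obstacle is producing that direction. The forms are positive multiples of roots, and the linear forms corresponding to $\alpha$ and $2\alpha$ are proportional. I would therefore take $d=-v$ (or $-v$ plus a small generic perturbation), with the aim of decreasing $\lambda(v)$ whenever $\lambda(v)>0$. If that fails, I would simply observe that the set of $v'$ satisfying all strict inequalities together with the active non-strict ones is the translate of an open cone intersected with an open set. Alternatively, one can note that the non-strict upper constraints and strict lower constraints can be met simultaneously by moving from $v$ along $-w$ for a regular dominant $w$, after handling the signs carefully. Given this, $P^\circ\cap\bigl(x_0+X_*(S\ad)_\Q\bigr)$ is dense in $P^\circ$, since $X_*(S\ad)_\Q$ is dense in $V(S\ad)$ and $P^\circ$ is open. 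Finally, take $Q=\{x\}$ if $v$ is rational, and otherwise $Q=x_0+\bigl(P^\circ\cap X_*(S\ad)_\Q\bigr)$. In the second case $x$ is a limit point of $Q$, and every $x'\in Q$ satisfies $G_{x',\frac1{p-1}+}=G_{x,\frac1{p-1}+}$ by the factorization argument above.
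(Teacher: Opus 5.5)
Your reduction to the integers $k_\lambda$ is fine, but the step ``the closure of $P^\circ$ contains $v$'' is false in general, and none of your proposed ways of finding the direction $d$ repairs it. You impose constraints for every $\alpha\in\Phi^{\mathrm{nd}}$, so the forms attached to $\alpha$ and to $-\alpha$ both occur, and their upper constraints can be active at the same time.

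Concretely, take $p=3$, $K=\Q_3$, $G=\mathrm{SL}_3$, so $e=1$ and $c=\tfrac12$. Choose $v$ with $\langle\alpha_1,v\rangle=\tfrac12$ and $\langle\alpha_2,v\rangle$ irrational. The constraint for $\lambda=\alpha_1$ reads $\langle\alpha_1,v'\rangle\le\tfrac12$, and the one for $\lambda=-\alpha_1$ reads $\langle\alpha_1,v'\rangle\ge\tfrac12$. So $P$ lies in a hyperplane and $P^\circ=\emptyset$. Since $v$ is not rational, your $Q$ is empty.

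This is not an artifact of your construction. If $\langle\alpha_1,v'\rangle>\tfrac12$, then $U_{\alpha_1,x',\frac12+}$ grows from $u_{\alpha_1}(\mathfrak m_K)$ to $u_{\alpha_1}(\mathcal O_K)$. If $\langle\alpha_1,v'\rangle<\tfrac12$, then $U_{-\alpha_1,x',\frac12+}$ grows from $u_{-\alpha_1}(\mathfrak m_K^2)$ to $u_{-\alpha_1}(\mathfrak m_K)$. Hence the set of $v'$ with unchanged groups has empty interior, and no argument that produces an open subset of $V(S_{\mathrm{ad}})$ can succeed. Your fallbacks fail accordingly:
\begin{itemize}
\item $d=-v$ fails for active forms with $\lambda(v)\le 0$;
\item moving along $-w$ with $w$ regular dominant increases every form attached to a negative root;
\item the open cone $\{d:\lambda(d)<0\text{ for all active }\lambda\}$ is empty exactly when some nontrivial nonnegative combination of active forms vanishes, as it does here.
\end{itemize}

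The missing idea is to work inside a rational affine subspace on which the critical pairings are frozen, rather than in an open subset of $V(S_{\mathrm{ad}})$. Because $c=e_{K_{\tilde\alpha}}/(p-1)$ is rational, every active constraint has $\lambda(v)\in\Q$. When $\langle\alpha,v\rangle$ is irrational, all the relevant quantities $c-\lambda(v)$ are non-integers, so the corresponding $k_\lambda$ are locally constant near $v$.

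So let $A_\R$ be cut out by $\lambda(v')=\lambda(v)$ for the active $\lambda$ (the paper freezes $\langle\alpha,v'\rangle=\langle\alpha,v\rangle$ for every $\alpha$ with rational pairing, which works equally well). This is an affine subspace containing $v$, defined by linear equations with rational coefficients, so it has rational points and they are dense in it. The remaining conditions are strict at $v$, so they define an open neighbourhood of $v$ in $A_\R$. If $v$ is irrational, then $\dim A_\R>0$, since a zero-dimensional $A_\R$ would be a single rational point. Hence rational points of this neighbourhood accumulate at $v$. This is precisely the paper's argument; with this set in place of $P^\circ$, the rest of your proof goes through.
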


\begin{proof}
Using Iwahori factorizations, to conclude that $G_{x,\frac1{p-1}+}=G_{x',\frac1{p-1}+}$, it is sufficient to show that $U_{\alpha, x, \frac1{p-1}+}=U_{\alpha, x', \frac1{p-1}+}$ for each $\alpha\in \Phi^{\mathrm{nd}}$; equivalently, we need $v'\coloneqq x'-x_0\in X_*(S\ad)_\Q$ to satisfy
$$U_{\alpha, x_0+v, \frac1{p-1}+}=U_{\alpha, x_0+v', \frac1{p-1}+}$$
for all $\alpha\in \Phi^{\mathrm{nd}}$, where $v=x-x_0$.  Using our usual description of the root groups when written in this form (see Subsection \ref{R1} and Remark \ref{R2-desc}), it is equivalent to show that
$$\left\lfloor e_{K_{\tilde\alpha}}\cdot \left(\frac1{p-1}-\langle \alpha, v\rangle\right)\right\rfloor=\left\lfloor e_{K_{\tilde\alpha}}\cdot \left(\frac1{p-1}-\langle \alpha, v'\rangle\right)\right\rfloor$$
for all $\alpha\in \Phi^{\mathrm{nd}}$, and that 
$$\left\lfloor e_{K_{\tilde\alpha}}\cdot \left(\frac1{p-1}-\langle 2\alpha, v\rangle\right)\right\rfloor=\left\lfloor e_{K_{\tilde\alpha}}\cdot \left(\frac1{p-1}-\langle 2\alpha, v'\rangle\right)\right\rfloor$$
when $\alpha\in \Phi^{\mathrm{nd}}$ is of type R2.  (Here, $\lfloor s \rfloor$ denotes the greatest integer $n$ with $n\leq s$ for any $s\in \R$.)  Since $\langle -\alpha, v'\rangle= -\langle \alpha, v'\rangle$ and $\langle \pm 2\alpha, v'\rangle=\pm2 \langle \alpha, v'\rangle$, we can collect all of these conditions to have the following concise description of a condition on $v'$ for each $\alpha\in \Phi^{+,\mathrm{nd}}$ which is sufficient to ensure $G_{x,\frac1{p-1}+}=G_{x',\frac1{p-1}+}$:
\begin{enumerate}
\item If $\langle \alpha, v\rangle$ is already rational, we require that $\langle \alpha, v'\rangle=\langle \alpha, v\rangle$.
\item If $\langle \alpha, v\rangle$ is not rational, then there is an open set of values $V_\alpha$ containing $\langle \alpha, v\rangle$ which we require $\langle \alpha, v'\rangle$ to lie in.
\end{enumerate}
If every $\langle \alpha, v\rangle$ is already rational, then $v\in X_*(S\ad)_\Q$, so we can just take $Q=\{x\}$; assume without loss of generality that this is not the case.  Note that each $\langle \alpha, -\rangle$ is a rational linear functional $X_*(S\ad)_\Q\to \Q$, so our argument will boil down to mere linear algebra.

First, let $A$ denote the set of $v'\in X_*(S\ad)_\Q$ satisfying the condition that $\langle \alpha, v'\rangle=\langle \alpha, v\rangle$ for each $\alpha\in \Phi^{+,\mathrm{nd}}$ which has $\langle \alpha, v\rangle\in \Q$.  Since each $\langle \alpha, -\rangle$ is a rational linear functional, and these conditions have a mutual solution $v$ in $V(S\ad)=X_*(S\ad)_\Q\otimes_\Q \R$, it is an elementary linear algebra exercise to show that there is at least one solution in $X_*(S\ad)_\Q$.  Thus, $A$ is an affine subspace of $X_*(S\ad)_\Q$ over some subspace $W\sub X_*(S\ad)_\Q$.  Write $A_\R$ for the unique affine subspace of $V(S\ad)$ over $W\otimes_\Q \R$ satisfying $A_\R\cap X_*(S\ad)_\Q=A$, which is easily seen to contain $v$.

We now have, for each $\alpha\in \Phi^{+,\mathrm{nd}}$ such that $\langle \alpha, v\rangle$ is not already rational, an affine functional $\langle \alpha, -\rangle: A \to \Q$ which we require to attain values within an open set $V_\alpha\sub \R$ which contains $\langle \alpha, v\rangle$.  These induce real affine functionals $\langle \alpha, -\rangle: A_\R\to \R$; let $V$ denote the open subset of $v'\in A_\R$ satisfying our condition that $\langle \alpha, v'\rangle\in V_\alpha$ for each such $\alpha$, which is nonempty as it contains $v$ itself.  By the density of $\Q$ in $\R$, it is immediate that $V\cap A=V\cap X_*(S\ad)_\Q\sub V(S\ad)$ is nonempty and has $v$ as a limit point.  Thus, if we write $Q=x_0+(V\cap A)$, we find that $Q\sub x_0+X_*(S\ad)_\Q$ is a subset with $x_0$ as a limit point satisfying the requirement that $G_{x,r}=G_{x',r}$ for all $x'\in Q$.
\end{proof}

\begin{prop} \label{existence}
Suppose that $G$ is quasisplit and $T$ is weakly induced.  Then for each $x\in \mc B(G)$, there exists a unique $p$-valuation $\omega_x$ on $G_{x,\frac1{p-1}+}$ satisfying the following properties:
\begin{enumerate}[\normalfont(a)]
\item $\omega$ is compatible with the Iwahori factorization
$$\prod_{\alpha\in \Phi^{-,\mathrm{nd}}}U_{\alpha,x,\frac1{p-1}+}\times T(K)_{\frac1{p-1}+}\times\prod_{\alpha\in \Phi^{+,\mathrm{nd}}}U_{\alpha,x,\frac1{p-1}+}\xto{\sim} G_{x,\frac1{p-1}+},$$
where the products are each taken in any (fixed) order.
\item If $\alpha\in \Phi$ is of type R1, then $\omega_x(u_\alpha(x))=v(x)+\langle \alpha,v\rangle$, where $x\in \mf m_{K_{\tilde\alpha}}^{m_\alpha}$ for $m_\alpha$ the least integer strictly greater than 
$$e_{K_{\tilde\alpha}}\cdot \left(\frac1{p-1}-\langle \alpha, v\rangle\right).$$
\item If $\alpha\in \Phi$ is of type R2, then
$$\omega_x(u_\alpha(x,y))=\min\{v(x)+\langle \alpha, v\rangle, v(y)+2\langle \alpha,v\rangle\},$$
where $x\in \mf m_{K_{\tilde\alpha}}^{m_\alpha}$ and $y\in \mf m_{K_{\tilde\alpha}}^{n_\alpha}$ satisfy $x\ol x=y+\ol y$ for $m_\alpha, n_\alpha$ the least integers strictly greater than 
$$e_{K_{\tilde\alpha}}\cdot \left(\frac1{p-1}-\langle \alpha, v\rangle\right)\qquad\mathrm{and}\qquad e_{K_{\tilde\alpha}}\cdot \left(\frac1{p-1}-\langle 2\alpha, v\rangle\right),$$
respectively.
\item If $t\in T(K)_{\frac1{p-1}+}$, then 
$$\omega_x(t)=\sup\{r\in \R^+\st t\in T(K)_r\}=\min\{v\big(\chi(t)-1\big)\st \chi\in X^*(T)\}.$$
\end{enumerate}
Moreover, this $p$-valuation is saturated, and given by the Moy-Prasad filtration associated to the point $x\in \mc B(G)$ via
$$\omega_x(g)=\sup\{r>0\st g\in G_{x,r}\}$$
for all $g\in G_{x,\frac1{p-1}+}$.
\end{prop}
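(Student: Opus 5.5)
First reduce to $x\in C$ as in the setup above, and treat the case $v\in X_*(S\ad)_\Q$. Lemma \ref{conjlemma} gives $E/K$ and $t=\mu(c)\in T(E)$ with $tG_{x,\frac1{p-1}+}t\inv\sub K_r$, $r$ the least integer $>e_E/(p-1)$. Lemma \ref{congruence-pval} gives the $p$-valuation $\omega_E(g)=\frac1{e_E}\sup\{n\st g\in K_n\}$ on $K_r$, and we set $\omega_x(g):=\omega_E(tgt\inv)$. The main computation is to evaluate $\omega_x$ on each Iwahori factor and check (b)--(d):
\begin{itemize}
\item For $u_\alpha(y)$ of type R1, Remark \ref{Chev-Stein-isoms} expresses $tu_\alpha(y)t\inv$ as a commuting product of the $u_{\sigma\tilde\alpha}((\sigma\tilde\alpha)(t)\sigma(y))$. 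The computation in Lemma \ref{conjlemma} gives $v((\sigma\tilde\alpha)(t))=\langle\alpha,v\rangle$, and $K_n$ has a product decomposition into absolute root groups, so $\omega_E=v(y)+\langle\alpha,v\rangle$.
\item For type R2, the same argument with the factors $\tilde\alpha,\tilde\alpha',\tilde\beta$ gives the minimum in (c).
\item For the torus, $t$ commutes with $T(K)$ and $T(E)_{n/e_E}\cap T(K)=T(K)_{n/e_E}$, which gives (d).
\end{itemize}
Compatibility (a) then follows from the fact that the Iwahori factorization of $K_n$ is a homeomorphism onto $K_n$ for every $n$. Conjugating, an element $g=\prod g_i$ lies in $K_n$ iff each conjugated factor does, once the absolute root factors are regrouped by relative root. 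This regrouping is legitimate because the relative factorization refines to the absolute one with matching orders, so $\omega_E(tgt\inv)=\min_i\omega_x(g_i)$.

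Next I would identify $\omega_x$ with the Moy-Prasad expression. By the Iwahori factorization of $G_{x,s}$ for every $s>\frac1{p-1}$ (\cite[Proposition 13.2.5]{KP}), $g\in G_{x,s}$ iff each factor lies in the corresponding $U_{\alpha,x,s}$ or $T(K)_s$. By Remark \ref{root-filts} and (b)--(d), each factor lies there iff its $\omega_x$-value is $\ge s$, so (a) gives $\omega_x(g)=\sup\{s\st g\in G_{x,s}\}$. Uniqueness is immediate, since (a)--(d) determine $\omega_x$ on each factor and hence on all of $G_{x,\frac1{p-1}+}$.

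For general $v$, use Lemma \ref{density} to pick $x'_n\in Q$ with $x'_n\to x$, or $x'_n=x$ if $x\in Q$. Each $G_{x'_n,\frac1{p-1}+}$ equals $G_{x,\frac1{p-1}+}$, with the same Iwahori factors, and each carries $\omega_{x'_n}$. The formulas (b)--(d) are continuous in $v$, so $\omega_{x'_n}(g)$ converges to the factorwise minimum of the formulas at $v$. These limits are bounded away from $\frac1{p-1}$: on each factor the values are $\ge\frac{m}{e}+\langle\alpha,v'\rangle$ with $m$ fixed, and the limit exceeds $\frac1{p-1}$ by the definition of $m$. Lemma \ref{limits} then makes the limit a $p$-valuation satisfying (a)--(d), and the Moy-Prasad identification goes through exactly as before.

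Finally, saturation. By (a), Remark \ref{compatible-saturation} reduces it to saturation on each factor. Proposition \ref{root-group-pval} gives this for the root groups, since its $\omega$ coincides with (b) and (c). Corollary \ref{torus-saturation} gives it for $T(K)_{\frac1{p-1}+}$. I expect the main obstacle to be the compatibility step, namely the bookkeeping that matches the relative Iwahori factorization, conjugated by $t$, with the absolute factorization of $K_n$ uniformly in $n$. In the R2 case one must also check that $\omega_E$ of $tu_\alpha(x,y)t\inv$ is exactly the minimum and is not reduced by interactions within the product.
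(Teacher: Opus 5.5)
Your proposal is correct and follows the paper's proof essentially step for step. You handle the rational case by conjugating into $K_r$ via $t=\mu(c)$ and reading off (a)--(d) from the two Iwahori factorizations, pass to general $v$ via Lemma \ref{density} and Lemma \ref{limits}, identify $\omega_x$ with the Moy-Prasad filtration via the factorization of each $G_{x,s}$, and get saturation factorwise from Remark \ref{compatible-saturation}, Proposition \ref{root-group-pval} and Corollary \ref{torus-saturation}; the R2 ``interaction'' worry you flag is settled by the uniqueness of the product decomposition of $K_n$ in an arbitrary order of the absolute roots, which you already use for compatibility.
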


\begin{proof}
This is a generalization of \cite[Proposition 3.5]{LS}.  Uniqueness is clear, as any $p$-valuation compatible with the Iwahori factorization will be determined by its restriction on each factor, where its values are determines by properties (b) and (c).  

Existence is similarly easy: suppose first that $v\in X_*(S\ad)_\Q$, and fix $(E,t)$ for $t=\mu(c)$ as in the proof of Lemma \ref{conjlemma}.  Then the $p$-valuation on $G_{x,\frac1{p-1}+}$ is defined by 
$$\omega_x(g)=\frac1{e_E}\cdot \sup\{n\in \Z^+\st tgt\inv \in K_n\},$$
and we obtain properties (a), (b), (c), and (d) by comparing the Iwahori factorization of $I$ and the analogous factorization of $K_n$, exactly as in the proof of Lemma \ref{conjlemma}.  We can then allow the case where $v\in V(S\ad)$ does not lie in $X_*(S\ad)_\Q$ by a simple application of Lemma \ref{limits}.  Indeed, use Lemma \ref{density} to construct a sequence of points $x_n\in \mc A(S)$ converging to $x$ which satisfy the requirements that $G_{x_n,\frac1{p-1}+}=G_{x,\frac1{p-1}+}$ and $v_n\coloneqq x_n-x_0\in X*(S\ad)_\Q$ for every $n$.  Then the $p$-valuations $\omega_{x_n}$ converge pointwise to the function $\omega_x$, which is bounded away from $\frac1{p-1}$, proving that $\omega_x$ is a $p$-valuation by Lemma \ref{limits}.

To see that $\omega_x$ is also given by the characterization using the Moy-Prasad filtration, we note that by \cite[Proposition 13.2.5]{KP} there is for all $r>0$ an Iwahori factorization on $G_{x,r}$.  Specifically, the product map
$$\prod_{\alpha\in \Phi^{-,\mathrm{nd}}} U_{\alpha, x, r}\times T(K)_r\times \prod_{\alpha\in \Phi^{+,\mathrm{nd}}} U_{\alpha, x, r}\to G_{x,r}$$
is a bijection.  It is now sufficient to show that the $p$-valuation $\omega_x$ agrees with the Moy-Prasad filtration on each Iwahori factor.  By construction, the two filtrations on the tori are equal, and the proof of Proposition \ref{root-group-pval} shows that the described filtrations on the root groups are precisely those coming from the Moy-Prasad filtration, as claimed.

Finally, as $\omega_x$ is compatible with the Iwahori factorization, it is enough to show that $\omega_x$ is saturated on each Iwahori factor to conclude that it is saturated on $G_{x,\frac1{p-1}+}$ by Remark \ref{compatible-saturation}.  Corollary \ref{torus-saturation} shows that $\omega_x$ is saturated on $T(K)_{\frac1{p-1}+}$, and Proposition \ref{root-group-pval} shows that it is saturated on $U_{\alpha, x, \frac1{p-1}+}$ for each $\alpha\in \Phi^{\mathrm{nd}}$.
\end{proof}

\subsection{The general case} \label{general-case}

Let now $G$ be any connected reductive $K$-group for $K/\Q_p$ a finite extension.  As always, assume $p>2$.  Assume that $G$ has a weakly induced maximally $K$-split maximal $K$-torus $T\leq G$, let $S\leq T$ be a maximal $K$-split torus, and let $Z\leq G$ be the centralizer of $S$ in $G$.  In \cite[Section 9.8]{KP}, a filtration on $Z(K)$ is constructed such that we have an Iwahori decomposition 
\begin{equation}\label{IwaDecomp}\prod_{\alpha\in \Phi^{-,\mathrm{nd}}} U_{\alpha,x,r}\times Z(K)_r\times\prod_{\alpha\in \Phi^{+,\mathrm{nd}}}U_{\alpha,x,r}\xto{\sim} G_{x,r}\end{equation}
for any $x\in \mc B(G)$ and any $r>0$ by \cite[Proposition 13.2.5(3)]{KP}.

We then have the following generalization of Proposition \ref{existence}.

\begin{thm} \label{general-existence}
For each $x\in \mc B(G)$, the filtration on $G_{x,\frac1{p-1}+}$ defined by
\begin{equation}\label{pval-def}\omega_x(g)\coloneqq \sup\{r>0\st g\in G_{x,r}\}\end{equation}
is a saturated $p$-valuation.  Moreover, this is compatible with the Iwahori decomposition \ref{IwaDecomp}.
\end{thm}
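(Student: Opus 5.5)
The plan is to reduce to the quasisplit case, Proposition \ref{existence}, by unramified descent, and then use Proposition \ref{invariants} (in the form of Remark \ref{gen_invariants}) to transfer saturation. By Steinberg's theorem, $G$ becomes quasisplit over the maximal unramified extension $\breve K$. It is in fact quasisplit over some finite unramified extension $L/K$, which I take to be Galois with group $\Gamma=\Gal(L/K)$. I choose $L$ so that the maximally split torus $T$ is also maximally $L$-split; this is the standard choice in \cite{KP}, since $T$ may be taken to be the centralizer of a maximal $\breve K$-split torus defined over $K$.

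Over $L$, the torus $T_L$ is still weakly induced, because $L/K$ is unramified and so the wild inertia is unchanged. The building satisfies $\mc B(G)=\mc B(G_L)^{\Gamma}$, and since $L/K$ is unramified the Moy-Prasad groups descend: $G_{x,r}=G_L(L)_{x,r}\cap G(K)=G_L(L)_{x,r}^{\Gamma}$ for all $r$ (see \cite[Section 13]{KP}). Proposition \ref{existence} applied to $G_L$ at $x$ shows that
$$\omega_x^L(g)=\sup\{r>0\st g\in G_L(L)_{x,r}\}$$
is a saturated $p$-valuation on $G_L(L)_{x,\frac1{p-1}+}$. Note that $v$ is normalized by $v(p)=1$, so the bound $\frac1{p-1}$ does not change under the base change. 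Taking invariants gives $G_{x,\frac1{p-1}+}=\big(G_L(L)_{x,\frac1{p-1}+}\big)^{\Gamma}$. Each $\sigma\in\Gamma$ acts by a group automorphism of this group, since it preserves the filtration because $x$ is $\Gamma$-fixed. Remark \ref{gen_invariants} then shows that $\omega_x^L$ restricts to a saturated $p$-valuation on $G_{x,\frac1{p-1}+}$. By the descent equality $G_{x,r}=G_L(L)_{x,r}^\Gamma$, this restriction is exactly the function $\omega_x$ in \eqref{pval-def}.

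Compatibility with the Iwahori decomposition \eqref{IwaDecomp} follows from the same descent identity. Let $g=u^-zu^+$ be a decomposition into factors. If $g\in G_{x,r}$, then uniqueness of the decomposition in \eqref{IwaDecomp} at level $r$ forces every factor to lie in the corresponding level-$r$ group, so $\omega_x(g)\ge r$ exactly when each factor's value is at least $r$. Hence
$$\omega_x(g)=\min\{\omega_x(\text{factors})\}.$$
The multiplication map for the factors is bijective at every level $r>0$ by \cite[Proposition 13.2.5]{KP}, and this is all the argument needs.

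I expect the main obstacle to be justifying the descent identities precisely, namely
$$\mc B(G)=\mc B(G_L)^\Gamma\qquad\text{and}\qquad G_{x,r}=G_L(L)_{x,r}^\Gamma\quad\text{for all } r>\tfrac1{p-1},$$
including at non-vertex points and for the filtration on $Z(K)$ versus $T(L)$. For the filtration on $Z(K)$, I would try to use that for weakly induced $T$ the filtration of \cite[Section 9.8]{KP} on $Z(K)$ is compatible with unramified base change, analogously to \cite[Remark B.6.8]{KP}. I would also use that $\Gamma$-fixed Moy-Prasad groups in the unramified case are handled by \cite[Proposition 13.2.5]{KP} together with Galois descent for parahoric group schemes. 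If $x$ does not lie in a $\Gamma$-stable apartment containing $T_L$, I would first conjugate by $G(K)$ to arrange this, as was done in Subsection \ref{qsplit-case}.
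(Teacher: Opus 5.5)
Your proposal is correct and follows essentially the same route as the paper: pass to a finite unramified Galois extension $L/K$ over which $G$ becomes quasisplit, use the descent identity $G_{x,r}=(G_L)_{x,r}\cap G(K)=(G_L)_{x,r}^{\Gal(L/K)}$ (the paper takes this directly from \cite[Proposition 9.8.3]{KP}, so the ``main obstacle'' you flag is settled by that citation), apply Proposition \ref{existence} over $L$, and conclude saturation via Remark \ref{gen_invariants}, with compatibility read off from uniqueness in the Iwahori decomposition. Your extra checks are left implicit in the paper but are correct: that $T_L$ stays weakly induced and maximally split, and that $\Gal(L/K)$ preserves $(G_L)_{x,\frac1{p-1}+}$ because $x$ is fixed.
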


\begin{proof}
By a result of Steinberg in \cite{Steinberg}, there is a finite unramified (hence Galois) extension $L/K$ such that $G_L$ is a quasisplit reductive $L$-group (this fact is also used in \cite[p. 218]{totaro}).  Let $\Theta=\Gal(L/K)$.  By \cite[Proposition 9.8.3]{KP}, we have for any $r>0$ that 
\begin{equation*}
G_{x,r}=(G_L)_{x,r}\cap G(K)=(G_L)_{x,r}^\Theta,
\end{equation*}
so $\omega_x:G_{x,\frac1{p-1}+}\setminus\{1\}\to \left(\frac1{p-1},\infty\right)$ is merely the restriction to $G_{x,\frac1{p-1}+}=(G_L)_{x,\frac1{p-1}+}^\Theta$ of the saturated $p$-valuation $\omega_x:(G_L)_{x,\frac1{p-1}+}\setminus\{1\}\to \left(\frac1{p-1},\infty\right)$.  Thus, by Remark \ref{gen_invariants}, we find that $\omega_x$ is a saturated $p$-valuation on $G_{x,\frac1{p-1}+}$.  It is now immediate from the definition \ref{pval-def} that $\omega_x$ is compatible with the Iwahori decomposition \ref{IwaDecomp}.
\end{proof}

We record the following immediate group-theoretic consequences.  Part \ref{uniform} should be compared with \cite[Corollary C.4]{koziol-appendix} in the appendix authored by Kozioł and Schwein.

\begin{cor}
Fix any $x\in \mc B(G)$.
\begin{enumerate}[\normalfont(1)]
\item $G_{x,\frac1{p-1}+}$ is a $p$-torsion free pro-$p$ group.
\item For any $r>\frac1{p-1}$, $G_{x,r+1}=\{g^p\st g\in G_{x,r}\}$.
\item \label{uniform} For any $r\geq 1$, $G_{x,r}$ is a uniform pro-$p$ group.
\end{enumerate}
\end{cor}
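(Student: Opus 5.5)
All three parts follow from Theorem \ref{general-existence}, which makes $(G_{x,\frac1{p-1}+},\omega_x)$ a saturated $p$-valued group with $G_{x,r}=\{g\st \omega_x(g)\geq r\}$ for $r>\frac1{p-1}$. We take the three parts in order, feeding standard facts from \cite[Part B]{schneiderbook} into this input.

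For (1), $G_{x,\frac1{p-1}+}$ is compact, since it is open in the compact group $G_{x,0}$. Because $\omega_x$ is defined by the Moy-Prasad filtration, it is topology-defining. As recalled in Subsection \ref{pval-background}, such a $p$-valued group is pro-$p$. For $p$-torsion freeness, take $g\neq 1$ with $g^p=1$. Then axiom (d) gives $\infty=\omega_x(g^p)=\omega_x(g)+1<\infty$, a contradiction. Alternatively, this is the case $h=1$ of Proposition \ref{pth}.

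For (2), fix $r>\frac1{p-1}$. The inclusion $\{g^p\st g\in G_{x,r}\}\sub G_{x,r+1}$ is immediate from axiom (d). For the reverse, let $h\in G_{x,r+1}$, so $\omega_x(h)\geq r+1>\frac p{p-1}$. Saturation then gives $g\in G_{x,\frac1{p-1}+}$ with $g^p=h$. Axiom (d) again gives $\omega_x(g)=\omega_x(h)-1\geq r$, so $g\in G_{x,r}$. Note that $\{g^p\}$ here is only the set of $p$th powers; (2) asserts that this set is already the subgroup $G_{x,r+1}$.

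For (3), I would use the characterization recalled in the introduction (\cite[Proposition A.1]{HKN}): a group is uniform exactly when it admits an equi-$p$-valuation. Concretely, we check the definition from \cite{DDMS} directly: finitely generated, torsion free, powerful, and $|P_i(G):P_{i+1}(G)|$ independent of $i$. Fix $r\geq 1$ and set $H=G_{x,r}$, which is torsion free and finitely generated by (1) and finite rank.

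\textbf{Powerful.} We need $[H,H]\sub H^p$. Since $r\geq 1$, we have $[H,H]\sub G_{x,2r}\sub G_{x,r+1}$. By (2) applied at level $r$, every element of $G_{x,r+1}$ is a $p$th power of an element of $H$, so $[H,H]\sub H^p$.

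\textbf{Lower $p$-series.} For a powerful group, $P_{i+1}(H)=H^{p^i}$ is the set of $p^i$th powers. Iterating (2) gives $P_{i+1}(H)=G_{x,r+i}$.

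\textbf{Constant index.} The remaining point, which I expect to be the main obstacle, is showing $|G_{x,r+i}:G_{x,r+i+1}|$ is independent of $i$. For this, take an ordered basis $(g_1,\ldots,g_d)$ of $(H,\omega_x|_H)$; note $(H,\omega_x|_H)$ is still $p$-valued, with values $\geq r$. By (2), $g\mapsto g^{p^i}$ is a bijection $H\to G_{x,r+i}$ for every $i$. Using the ordered basis, both $G_{x,r+i}$ and $G_{x,r+i+1}$ correspond to explicit subsets $\prod p^{n_j}\Z_p$ of $\Z_p^d$. The index is then $p^{d'}$, where $d'$ counts how many of the values $\omega(g_j)+v(x_j)$ lie in $[r+i,r+i+1)$. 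This count is the same for every $i$, since shifting by $1$ corresponds to multiplying $x_j$ by $p$. Hence $|H:G_{x,r+1}|=p^d$ for all levels, and $H$ is uniform.
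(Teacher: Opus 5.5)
Your parts (1) and (2), and your proof that $G_{x,r}$ is powerful in (3), are the same as the paper's argument. Commutators land in $G_{x,2r}\sub G_{x,r+1}$, and saturation in the form of (2) makes them $p$th powers of elements of $G_{x,r}$.

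The difference is in finishing (3). The paper stops at powerfulness and cites \cite[Theorem 4.5]{DDMS}: a finitely generated powerful pro-$p$ group is uniform if and only if it is torsion free. So (1) plus powerfulness already suffice, and no index computation is needed. You instead verify the constant-index condition by hand, and that step has a hole.

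Your count at level $r+i$ is $\#\{j\st \omega(g_j)<r+i+1\}$. The shift $x_j\mapsto px_j$ only injects the level-$i$ count into the level-$(i+1)$ count. Constancy for all $i\geq 0$, and your conclusion $|H:G_{x,r+1}|=p^d$, both require $\omega(g_j)<r+1$ for every $j$. You never derive this, and it is not automatic for an ordered basis of an arbitrary $p$-valued group. For example, take $\omega(x,y)=\min\{v(x)+1,v(y)+\frac52\}$ on $\Z_p^2$ with $r=1$; then $|G_1:G_2|=p$ but $|G_2:G_3|=p^2$.

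You also identify the group index $|G_{x,s}:G_{x,s+1}|$ with the index of the coordinate subsets $\prod p^{n_j}\Z_p$. But $(x_j)\mapsto\prod g_j^{x_j}$ is not a homomorphism. The identification is true, but it needs the fact that $\gr H$ is free over $\F_p[\eps]$ on the $\symb(g_j)$.

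Both points are easy to repair, and you already have the needed ingredient in the bijection $g\mapsto g^{p^i}$ from (2); you just don't apply it to the basis.

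\begin{itemize}
\item \textbf{Fixing the basis bound.} If $\omega(g_j)\geq r+1$, then by (2) $g_j=h^p$ with $h\in H$. Hence $\symb(g_j)=\eps\,\symb(h)\in\eps\gr H$, contradicting that the $\symb(g_k)$ form an $\F_p[\eps]$-basis.
\item \textbf{Bypassing the ordered basis.} More simply, use the identity $\omega(g^{-p}h^p)=\omega(g\inv h)+1$ from \cite[Proposition 25.1]{schneiderbook}, as in the proof of Proposition \ref{pth}. It shows that $gG_{x,s+1}\mapsto g^pG_{x,s+2}$ is a well-defined injection $G_{x,s}/G_{x,s+1}\to G_{x,s+1}/G_{x,s+2}$ for $s\geq r$. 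By (2) it is surjective, so all the indices $|P_i(H):P_{i+1}(H)|=|G_{x,r+i-1}:G_{x,r+i}|$ agree.
\end{itemize}

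With either fix, your direct verification is complete and self-contained. The paper's citation gets constancy of the index from the general theory of powerful groups. There, $x\mapsto x^p$ always induces surjections $P_i/P_{i+1}\to P_{i+1}/P_{i+2}$, and torsion-freeness makes them bijective.
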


\begin{proof}
These statements all are easy to prove for any finite rank saturated pro-$p$ group $(G,\omega)$, and are not unique to our situation.  Fact \ref{uniform} is perhaps the least standard, so we will prove that one for the convenience of the reader.  Because $p$-valued groups are always torsion-free, we know by \cite[Theorem 4.5]{DDMS} that it is sufficient to prove for any $r\geq 1$ that $G_r=\{g\in G\st \omega(g)\geq r\}$ is powerful.  Indeed, given any $x,y\in G_r$, we have that
$$\omega([x,y])\geq \omega(x)+\omega(y)\geq 2r\geq r+1,$$
so the saturation yields some $z\in G_r$ such that $z^p=[x,y]$.  This means that $G_r/(G_r)^p$ is abelian, so $G_r$ is powerful, as desired.
\end{proof}

\subsection{Quasisplit pro-\texorpdfstring{$p$}{p} Iwahori}

We keep the same setup as in Subsection \ref{qsplit-case}, so $p>2$ and $G$ is a quasisplit connected reductive $K$-group for $K/\Q_p$ a finite extension with maximal torus $T$, maximal split torus $S\leq T$, Borel subgroup $B\leq G$, and a fixed absolutely special vertex $x_0\in \mc A(S)$ arising from our Chevalley-Steinberg system $X$.

Suppose that $x$ is a fixed point lying inside a chamber of $\mc B(G)$.  Without loss of generality, we may assume that $x$ lies in the dominant base chamber $C\sub \mc A(S)$ relative to $x_0$ and $B$ in the semisimple apartment associated to $S$, as otherwise we could have changed our choices above to arrange this; write $x=x_0+v$ for $v\in V(S\ad)$.  Then $I\coloneqq G_{x,0+}$ is a maximal open pro-$p$ subgroup of $G(K)^0$ (the kernel of the Kottwitz homomorphism) by \cite[Proposition 13.5.2(1)]{KP}, referred to as a pro-$p$ Iwahori subgroup, and we have defined a saturated $p$-valuation $\omega_x$ on its open subgroup $G_{x,\frac1{p-1}+}$; a natural question to pose is whether or not this subgroup is all of $I$.  Using that $G$ is quasisplit, it is possible to find explicit conditions ensuring that this is the case.

\begin{rem} \label{torus-assumption}
Using the Iwahori decomposition, we see that $G_{x,0+}=G_{x,\frac1{p-1}+}$ if and only if $T(K)_{0+}=T(K)_{\frac1{p-1}+}$ and $U_{\alpha,x,0+}=U_{\alpha,x,\frac1{p-1}+}$ for all $\alpha\in \Phi^{\mathrm{nd}}$.  If $T\iso \prod_{i=1}^m R_{L_i/K}\G_m$ is in fact an induced torus, then the condition that $T(K)_{0+}=T(K)_{\frac1{p-1}+}$ translates exactly to the requirement that $p-1>e_{L_i}$ for all $i$, however for more general weakly induced tori it is difficult to reduce this assumption to a simpler statement.
\end{rem}

On the other hand, we can provide numerical bounds that are equivalent to $U_{\alpha,x,0+}=U_{\alpha,x,\frac1{p-1}+}$.  To set notation, for $\alpha\in \Phi^{+,\mathrm{nd}}$, we will write $m_\alpha\coloneqq 1$ if $\alpha$ is of type R1, and $m_\alpha\coloneqq 2$ if $\alpha$ is of type R2.

\begin{lem} \label{qpslit-zeroplus-roots}
For $\alpha\in \Phi^{+,\mathrm{nd}}$, the condition $U_{\alpha,x,0+}=U_{\alpha,x,\frac1{p-1}+}$ holds if and only if 
$$\frac1{p-1}<\langle \alpha,v\rangle,$$
and $U_{-\alpha,x,0+}=U_{-\alpha,x,\frac1{p-1}+}$ holds if and only if 
$$\langle \alpha,v\rangle<\frac1{m_\alpha}\left(\frac1{e_{K_{\tilde\alpha}}}-\frac1{p-1}\right).$$
\end{lem}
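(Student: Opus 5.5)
We compare the explicit descriptions of the root-group filtrations from Remark \ref{root-filts} (type R1) and Remark \ref{R2-desc} (type R2), using that the valuation on $K_{\tilde\alpha}$ takes values in $\frac1{e_{K_{\tilde\alpha}}}\Z$. Throughout, $x=x_0+v$ with $v$ dominant and $x$ in the open chamber $C$. By \cite[Fact 6.4.5]{KP} this gives $0<\langle\alpha,v\rangle<\frac1{m_\alpha e_{K_{\tilde\alpha}}}$ for all $\alpha\in\Phi^{+,\mathrm{nd}}$. Write $e=e_{K_{\tilde\alpha}}$ and $a=\langle\alpha,v\rangle$. The underlying elementary fact is this: for a real number $c$, the sets $\{z:v(z)>c\}$ and $\{z:v(z)>c'\}$ coincide exactly when no element of $\frac1e\Z$ lies in the half-open interval between them. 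Hence $\{v(z)>-b\}=\{v(z)>\frac1{p-1}-b\}$ if and only if $(-b,\frac1{p-1}-b]\cap\frac1e\Z=\emptyset$.

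For $\alpha$ of type R1 the root group is $u_\alpha(K_{\tilde\alpha})$, and $U_{\alpha,x,r+}$ corresponds to $v(z)>r-a$. Equality at $r=0$ and $r=\frac1{p-1}$ therefore means that $(-a,\frac1{p-1}-a]$ contains no point of $\frac1e\Z$. Since $0<a<\frac1e$, the interval $(-a,\frac1{p-1}-a]$ contains $0$ precisely when $a\le\frac1{p-1}$. It contains $\frac1e$ only if $\frac1{p-1}-a\ge\frac1e$, but $\frac1{p-1}-a<\frac1{p-1}\le\frac1e$ would fail only when $e>p-1$; in that case $a>\frac1{p-1}$ is impossible since $a<\frac1e$. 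So I will handle the degenerate regime by noting that the stated inequality already fails there. This yields the first criterion, $a>\frac1{p-1}$. For $-\alpha$ we have $\langle-\alpha,v\rangle=-a$, and the relevant interval is $(a,\frac1{p-1}+a]$. It contains no element of $\frac1e\Z$ if and only if $\frac1{p-1}+a<\frac1e$, because $a\in(0,\frac1e)$ means the first lattice point above $a$ is $\frac1e$. This is the second criterion with $m_\alpha=1$.

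For type R2, Remark \ref{R2-desc} shows that $U_{\pm\alpha,x,r+}$ is cut out by the two conditions $v(x)>r\mp a$ and $v(y)>r\mp 2a$ on pairs $(x,y)$ with $x\ol x=y+\ol y$. Here the valuations live in $\frac1e\Z$ with $e=e_{K_{\tilde\alpha}}$. Sufficiency is immediate: if both intervals $(\mp a,\frac1{p-1}\mp a]$ and $(\mp2a,\frac1{p-1}\mp2a]$ avoid $\frac1e\Z$, the two subgroups agree. For the positive root, $0<2a<\frac2e$ and $a<\frac1{2e}$, so the binding constraint is $a>\frac1{p-1}$; the condition on $y$ then follows automatically. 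For $-\alpha$, the binding constraint is $\frac1{p-1}+2a<\frac1e$, which is $a<\frac12(\frac1e-\frac1{p-1})$.

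The main obstacle is necessity in the R2 case. The $x$- and $y$-coordinates are linked by $x\ol x=y+\ol y$, so one must check that a single failing constraint really produces an element in $U_{\alpha,x,0+}\setminus U_{\alpha,x,\frac1{p-1}+}$. I plan to use the elements from Proposition \ref{compatibility}, specifically $u_\alpha(0,z\gamma)$ for a traceless $\gamma$ and $u_\alpha(w,\frac12w^2)$ with $w\in K_{\tilde\beta}$. These are valid root group elements whose coordinates can be tuned independently. The difficulty is that they give only values $v(z\gamma)\in v(\gamma)+\frac1{e_{K_{\tilde\beta}}}\Z$. So I would split into the cases $K_{\tilde\alpha}/K_{\tilde\beta}$ unramified versus ramified: in the unramified case take $v(\gamma)=0$, and in the ramified case take $\gamma$ a traceless uniformizer. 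I would then check that the lattice points in the offending interval are hit, invoking the chamber bound $a<\frac1{2e}$ where needed.
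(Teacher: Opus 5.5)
Your argument is correct and is essentially the paper's own: both read the conditions off the explicit descriptions in Remarks \ref{root-filts} and \ref{R2-desc}, using that $v$ takes values in $\frac1{e_{K_{\tilde\alpha}}}\Z$ together with the chamber bound $0<\langle\alpha,v\rangle<\frac1{m_\alpha e_{K_{\tilde\alpha}}}$ from \cite[Fact 6.4.5]{KP}, and your phrasing via lattice points in half-open intervals simply replaces the paper's floor-function comparisons. Your extra care over necessity in type R2, which the paper takes directly from Remark \ref{R2-desc}, is justified and your witnesses suffice: $u_\alpha(1,\frac12)$ handles $\alpha$, and for $-\alpha$ you only need $u_{-\alpha}(0,y)$ with $y$ traceless of valuation exactly $\frac1{e_{K_{\tilde\alpha}}}$, which exists whether $K_{\tilde\alpha}/K_{\tilde\beta}$ is unramified or ramified.
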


\begin{proof}
Fix any $\alpha\in \Phi^{+,\mathrm{nd}}$, and write $a\coloneqq \langle \alpha,v\rangle$ for convenience.  By \cite[Fact 6.4.5]{KP}, we have that $a\in \left(0,\frac1{m_\alpha e_{K_{\tilde\alpha}}}\right)$.

Our analysis will use similar ideas to those in the proof of Lemma \ref{density}.  Suppose that $\alpha$ is first of type R1, so as in Subsection \ref{R1} we have an isomorphism $u_\alpha:K_{\tilde\alpha}\xto{\sim} U_\alpha(K)$ which satisfies
$$U_{\alpha,x,r+}=\{u_\alpha(x)\st v(x)+a>r\}=u_\alpha(\mf m_{K_{\tilde\alpha}}^i)$$
for all $r\geq 0$, where $i$ is the least integer strictly greater than
$$e_{K_{\tilde\alpha}}\cdot\left(r-a\right),$$
which can be described succinctly as 
$$i=1+\lfloor e_{K_{\tilde\alpha}}\cdot\left(r-a\right)\rfloor.$$
We then see that $U_{\alpha,x,0+}=U_{\alpha,x,\frac1{p-1}+}$ if and only if 
$$\left\lfloor e_{K_{\tilde\alpha}}\cdot \left(\frac1{p-1}-a\right)\right\rfloor=\lfloor e_{K_{\tilde\alpha}}(-a)\rfloor,$$
and the latter is equal to $-1$.  It is easy to see that this condition is thus equivalent to $a>\frac1{p-1}$.  Similarly, $U_{-\alpha,x,0+}=U_{-\alpha,x,\frac1{p-1}+}$ holds if and only if 
$$\left\lfloor e_{K_{\tilde\alpha}}\cdot \left(\frac1{p-1}+a\right)\right\rfloor=\lfloor e_{K_{\tilde\alpha}}\cdot a\rfloor,$$
and the latter automatically equaling 0 means this is equivalent to requiring
$$e_{K_{\tilde\alpha}}\cdot \left(\frac1{p-1}+a\right)<1.$$
This is precisely the same as saying $a<\frac1{e_{K_{\tilde\alpha}}}-\frac1{p-1}$.

Suppose now that $\alpha\in \Phi^{+,\mathrm{nd}}$ is of type R2, so now $0<a<\frac1{2e_{K_{\tilde\alpha}}}$.  Similar to the previous case, we have by Remark \ref{R2-desc} that $U_{\alpha,x,0+}=U_{\alpha,x,\frac1{p-1}+}$ if and only if 
$$\left\lfloor e_{K_{\tilde\alpha}}\cdot\left(\frac1{p-1}-a\right)\right\rfloor=\lfloor e_{K_{\tilde\alpha}}(-a)\rfloor\quad\mathrm{and}\quad\left\lfloor e_{K_{\tilde\alpha}}\cdot\left(\frac1{p-1}-2a\right)\right\rfloor=\lfloor e_{K_{\tilde\alpha}}(-2a)\rfloor.$$
The latter values in both expressions equal $-1$, so this is equivalent to requiring
$$e_{K_{\tilde\alpha}}\cdot\left(\frac1{p-1}-a\right)<0\quad\mathrm{and}\quad e_{K_{\tilde\alpha}}\cdot\left(\frac1{p-1}-2a\right)<0;$$
the former is a stricter condition since $a>0$, and can be rewritten as $a>\frac1{p-1}$.  One also has $U_{-\alpha,x,0+}=U_{-\alpha,x,\frac1{p-1}+}$ if and only if 
$$\left\lfloor e_{K_{\tilde\alpha}}\cdot\left(\frac1{p-1}+a\right)\right\rfloor=\lfloor e_{K_{\tilde\alpha}}\cdot a\rfloor\quad\mathrm{and}\quad\left\lfloor e_{K_{\tilde\alpha}}\cdot\left(\frac1{p-1}+2a\right)\right\rfloor=\lfloor e_{K_{\tilde\alpha}}\cdot 2a\rfloor;$$
in this case, the latter is a stricter condition, and $\lfloor e_{K_{\tilde\alpha}}\cdot 2a\rfloor=0$ means that it can be rewritten as 
$$a<\frac12\left(\frac1{e_{K_{\tilde\alpha}}}-\frac1{p-1}\right).$$
\end{proof}

We obtain the following as a corollary of the argument preceding Lemma \ref{qpslit-zeroplus-roots}.

\begin{cor} \label{polytope}
If $G$ is quasisplit, $T$ is weakly induced, and $T(K)_{0+}=T(K)_{\frac1{p-1}+}$, then there is an open convex polytope $U$ of the dominant base chamber $C\sub \mc A(S)$ given by
$$U=\left\{x\in \mc A(S)\Biggst x=x_0+v, \quad \frac1{p-1}<\langle \alpha, v\rangle<\frac1{m_\alpha}\left(\frac1{e_{K_{\tilde\alpha}}}-\frac1{p-1}\right)\quad \forall \alpha\in \Phi^{+,\mathrm{nd}}\right\}$$
such that each point $x\in U$ corresponds to a distinct saturated $p$-valuation $\omega_x$ on $I$, which is described by the Moy-Prasad filtration at the point $x$.  This subset $U$ is nonempty precisely when 
$$p-1>(m_\alpha \mathrm{ht}(\alpha)+1)\cdot e_{K_{\tilde\alpha}}$$
for all $\alpha\in \Phi^{+,\mathrm{nd}}$.  
\end{cor}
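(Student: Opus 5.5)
The plan is to combine Remark \ref{torus-assumption}, Lemma \ref{qpslit-zeroplus-roots} and Proposition \ref{existence}.

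\textbf{Step 1: $I$ equals the saturated group.} For $x=x_0+v\in C$, the Iwahori factorization (Remark \ref{torus-assumption}) shows that $G_{x,0+}=G_{x,\frac1{p-1}+}$ holds exactly when two conditions hold. The first is $T(K)_{0+}=T(K)_{\frac1{p-1}+}$, which is a hypothesis. The second is $U_{\pm\alpha,x,0+}=U_{\pm\alpha,x,\frac1{p-1}+}$ for every $\alpha\in\Phi^{+,\mathrm{nd}}$. By Lemma \ref{qpslit-zeroplus-roots}, this second condition is precisely the pair of inequalities defining $U$. So for $x\in U$ we get $I=G_{x,0+}=G_{x,\frac1{p-1}+}$, and Proposition \ref{existence} then says that $\omega_x$ is a saturated $p$-valuation on $I$ given by the Moy-Prasad filtration.

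\textbf{Step 2: shape of $U$.} $U$ is cut out by finitely many strict affine inequalities in $v$, so it is an open convex polytope. It lies in $C$ because $\frac1{p-1}>0$ and $\frac1{m_\alpha}\bigl(\frac1{e_{K_{\tilde\alpha}}}-\frac1{p-1}\bigr)<\frac1{m_\alpha e_{K_{\tilde\alpha}}}$, using the description of $C$ from \cite[Fact 6.4.5]{KP}.

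\textbf{Step 3: distinctness.} For $x\neq x'$ in $U$, some simple root $\alpha$ has $\langle\alpha,v\rangle\neq\langle\alpha,v'\rangle$. Take a nontrivial $u_\alpha(y)\in U_{\alpha,x,0+}$; in type R2 take an element with a nonzero first coordinate, for example from $U_{\alpha,1}^a$. By Proposition \ref{existence}(b),(c), its $\omega_x$ and $\omega_{x'}$ values differ by $\langle\alpha,v-v'\rangle\neq 0$. Hence $\omega_x\neq\omega_{x'}$.

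\textbf{Step 4: nonemptiness criterion (the main step).} Write $\Delta$ for the simple roots and $a_\beta=\langle\beta,v\rangle$ for $\beta\in\Delta$; these can be chosen freely since $\Delta$ is a basis of the dual of $V(S\ad)$. Any $\alpha\in\Phi^{+,\mathrm{nd}}$ can be written as $\alpha=\sum_\beta c_\beta\beta$ with $\sum_\beta c_\beta=\mathrm{ht}(\alpha)$, so $\langle\alpha,v\rangle=\sum_\beta c_\beta a_\beta$.

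Necessity: if $U\neq\emptyset$, every $a_\beta>\frac1{p-1}$, so $\langle\alpha,v\rangle>\frac{\mathrm{ht}(\alpha)}{p-1}$. Combining this with the upper bound gives $\frac{\mathrm{ht}(\alpha)}{p-1}<\frac1{m_\alpha}\bigl(\frac1{e_{K_{\tilde\alpha}}}-\frac1{p-1}\bigr)$. Rearranging yields $p-1>(m_\alpha\mathrm{ht}(\alpha)+1)e_{K_{\tilde\alpha}}$.

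Sufficiency: take all $a_\beta=\frac1{p-1}+\delta$ with $\delta>0$ small. Then $\langle\alpha,v\rangle=\mathrm{ht}(\alpha)\bigl(\frac1{p-1}+\delta\bigr)$. Non-simple roots also satisfy the lower bound, since $\mathrm{ht}(\alpha)\geq1$. The strict form of the hypothesis gives room for the upper bound once $\delta$ is small, because there are finitely many roots.

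The subtle point to check is the meaning of $\mathrm{ht}$ and $e_{K_{\tilde\alpha}}$ for nonreduced relative root systems. One must also confirm that the condition for a simple root is implied by, or matches, the one stated. If some simple $\beta$ has a smaller $e$ than the roots it appears in, the uniform choice still works because each inequality is checked root by root.
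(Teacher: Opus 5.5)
Your proposal is correct and follows essentially the paper's argument. Step 1 is exactly the reasoning (Remark~\ref{torus-assumption}, Lemma~\ref{qpslit-zeroplus-roots}, Proposition~\ref{existence}) the paper invokes by calling this a corollary, and Step 4 is the paper's simple-root-coordinate argument with a uniform choice $a_i=\frac1{p-1}+\delta$. The differences are minor: you prove necessity directly from $a_i>\frac1{p-1}$ instead of first shrinking to equal $a_i$, and your Step 3 supplies a proof that the $\omega_x$ are distinct, which the paper asserts without proof.
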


In comparison, we note that by \cite[Fact 6.4.5]{KP}, the dominant base chamber $C$ itself can be described in this manner as
$$C=\left\{x\in \mc A(S)\Biggst x=x_0+v,\quad 0<\langle \alpha, v\rangle <\frac1{m_\alpha e_{K_{\tilde\alpha}}}\quad \forall \alpha\in \Phi^{+,\mathrm{nd}}\right\}.$$

\begin{proof}
One can easily check that $U$ is an open convex polytope using the conditions defining $U$: for each $\alpha\in \Phi^{+,\mathrm{nd}}$, we have a linear functional $V(S\ad)\to \R$ given by $v\mapsto \langle \alpha, v\rangle$.  The set 
$$U_0\coloneqq \left\{v\in V(S\ad)\Biggst \frac1{p-1}<\langle \alpha, v\rangle<\frac1{m_\alpha}\left(\frac1{e_{K_{\tilde\alpha}}}-\frac1{p-1}\right)\quad \forall \alpha\in \Phi^{+,\mathrm{nd}}\right\}$$
is thus the intersection of finitely many open half-spaces, and is bounded as the set of simple roots $\Delta\sub \Phi^{+,\mathrm{nd}}$ forms a basis of the dual space $V(S\ad)^*$.  Then $U=x_0+U_0$ is indeed a convex open polytope in $C$.  

The only claim which remains unproven is the characterization of when $U$ is nonempty.  Say $\Delta=\{\alpha_1,\ldots, \alpha_s\}\sub \Phi^{+,\mathrm{nd}}$ is a labeling for our simple roots, and set $a_i\coloneqq \langle \alpha_i,v\rangle$; choosing our point $x=x_0+v$ for $v\in V(S\ad)$ is equivalent to selecting values $a_i\in \R$ for $i\leq s$.  For each $\alpha\in \Phi^{+,\mathrm{nd}}$, write $\alpha=\sum_i c^\alpha_i\alpha_i$ for $c^\alpha_i\geq 0$ with $\mathrm{ht}(\alpha)=\sum_i c^\alpha_i$.  Now $x\in U$ if and only if
$$\frac1{p-1}<\sum_i c_i^\alpha a_i<\frac1{m_\alpha}\left(\frac1{e_{K_{\tilde\alpha}}}-\frac1{p-1}\right)$$
for all $\alpha\in \Phi^{+,\mathrm{nd}}$.  If this set is nonempty, we claim that we can set all of the $a_i$ to be the same value.  Indeed, choosing $\alpha=\alpha_i$ for each $i$ shows that $a_i>\frac1{p-1}$ for each $i$, and the only remaining conditions required for our choice of $a_i$ to result in a point lying inside $U$ are upper bounds on nonnegative linear combinations of the $a_i$ (as $c^\alpha_i\geq 0$).  Thus, we may shrink our values of $a_i>\frac1{p-1}$ without forcing $x$ to exit the set $U$, proving that there is some $\eps>0$ such that $a_i\coloneqq\frac1{p-1}+\eps$ for all $i$ results in a valid point within $U$.  Set $a\coloneqq \frac1{p-1}+\eps$.  Note that now $\langle \alpha,v\rangle=\sum_i c_i^\alpha a=\mathrm{ht}(\alpha) \cdot a$.

Thus, to characterize when $U$ is nonempty, it is sufficient to check when there is some value $a\in \R$ such that 
$$\frac1{p-1}<\mathrm{ht}(\alpha)\cdot a<\frac1{m_\alpha}\left(\frac1{e_{K_{\tilde\alpha}}}-\frac1{p-1}\right)$$
for all $\alpha\in \Phi^{+,\mathrm{nd}}$, which evidently exists precisely when 
$$p-1>(m_\alpha \mathrm{ht}(\alpha)+1)\cdot e_{K_{\tilde\alpha}}$$
for all $\alpha\in \Phi^{+,\mathrm{nd}}$. 
\end{proof}

\begin{rem} \label{splitcase}
Suppose that $G$ is split and $p-1>e_K h$ for $h=1+\max\{\mathrm{ht}(\alpha)\st \alpha\in \Phi\}$ the Coxeter number.  Under these assumptions, $T$ is certainly weakly induced, and our condition $T(K)_{0+}=T(K)_{\frac1{p-1}+}$ is automatically satisfied by Remark \ref{torus-assumption}.  Moreover, $U$ is nonempty by our above characterization, and one can verify that the barycenter $x$ of $C$ is an element of $U$.  As observed by \cite{dottolehung} (warning: their convention uses $v(p)=e_K$), the $p$-valuation constructed in \cite[Proposition 3.5]{LS} on the pro-$p$ Iwahori subgroup $I=G_{x,0+}$ is precisely the $p$-valuation $\omega_x$ in our notation for $x$ the barycenter of $C$.  Thus, Theorem \ref{general-existence} contains \cite[Proposition 3.5]{LS} as a special case.
\end{rem}

\subsection{Semisimple pro-\texorpdfstring{$p$}{p} Iwahori}

We now consider the case when $G$ is semisimple and quasisplit.

\begin{rem} \label{simplyconn}
Recall that if $G$ is quasisplit, semisimple, and simply connected, then its maximal torus $T$ is induced; more concretely, it can be written as 
$$T\iso \prod_{\alpha\in \Delta} R_{K_{\tilde\alpha}/K}\G_m.$$
Here, the map $R_{K_{\tilde\alpha}/K}\G_m\to T\sc$ is given by the product
$$\alpha^\vee\coloneqq \prod_{\sigma\in \Theta/\Theta_{\tilde\alpha}} \sigma\tilde\alpha,$$
where this notation is justified by the fact that the composition of this with the canonical map $\G_m\to R_{K_{\tilde\alpha}/K} \G_m$ has image lying in $S$, and is precisely $\alpha^\vee:\G_m\to S$.  Thus, if we assume that $p-1>(\mathrm{ht}(\tilde\alpha)+1)\cdot e_{K_{\tilde\alpha}}$, when $G$ is quasisplit, semisimple, and simply connected, it is immediate that $T$ is induced with $T(K)_{0+}=T(K)_{\frac1{p-1}+}$.  

We can use this to better understand the non-simply connected case as follows.  For $G$ a semisimple $K$-group, let $G\sc$ denote its simply connected cover, and let $T\sc$ denote the maximal torus given by the preimage of $T$ under the map $G\sc\to G$.  In this case, the apartment $\mc A(S)=\mc A(S\sc)$ is an affine space over $V(S)=V(S\sc)$, where $S\sc$ denotes the maximal split torus of $G\sc$ lying in the preimage of $S$.
\end{rem}

\begin{cor} \label{qsplit-ss}
Suppose that $G$ is a connected quasisplit semisimple $K$-group, and assume that 
$$p-1>\big(m_\alpha\mathrm{ht}(\alpha)+1\big)\cdot e_{K_{\tilde\alpha}}$$
for all $\alpha\in \Phi^{+,\mathrm{nd}}$.  Then the maximal pro-$p$ subgroups of $T(K)$ and $T\sc(K)$ are canonically isomorphic, and the roots and root groups of $G(K)$ and $G\sc(K)$ are identified as always, allowing us to conclude that the pro-$p$ Iwahori subgroups $I\sc\sub G\sc(K)$ and $I\sub G(K)$ associated to the chamber $C$ are isomorphic.  

In particular, all of Corollary \ref{polytope} applies to such a group $G$: there is an open convex nonempty polytope $U$ of $C\sub \mc A(S)$ given by 
$$U=\left\{x\in \mc A(S)\Biggst x=x_0+v, \quad \frac1{p-1}<\langle \alpha, v\rangle<\frac1{m_\alpha}\left(\frac1{e_{K_{\tilde\alpha}}}-\frac1{p-1}\right)\quad \forall \alpha\in \Phi^{+,\mathrm{nd}}\right\}$$
such that each point $x\in U$ corresponds to a distinct saturated $p$-valuation $\omega_x$ on $I$, which is described by the Moy-Prasad filtration at the point $x$.
\end{cor}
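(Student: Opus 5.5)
The plan is to reduce to the simply connected cover $\pi:G\sc\to G$ and then apply Corollary \ref{polytope} to $G$ itself. By Remark \ref{simplyconn}, $T\sc\iso\prod_{\alpha\in\Delta}R_{K_{\tilde\alpha}/K}\G_m$ is induced, hence weakly induced. Since simple roots have height $1$, the hypothesis applied to $\alpha\in\Delta$ gives $p-1>(m_\alpha+1)e_{K_{\tilde\alpha}}\geq 2e_{K_{\tilde\alpha}}$. In particular $p-1>e_{K_{\tilde\alpha}}$ for every factor, so Remark \ref{torus-assumption} yields $T\sc(K)_{0+}=T\sc(K)_{\frac1{p-1}+}$.

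The main step is to compare the pro-$p$ parts of the tori. The isogeny $\pi|_{T\sc}:T\sc\to T$ has finite central kernel $\mu$, whose order divides the order of the fundamental group of $G$. I will show that $\pi$ induces an isomorphism $T\sc(K)_{0+}\xto{\sim}T(K)_{0+}$, in the spirit of \cite{totaro}.

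For injectivity, note that $\mu(K)\cap T\sc(K)_{0+}$ is a torsion subgroup of a group that is $p$-torsion free, since it is $p$-valued by Corollary \ref{torus-saturation}. It is also prime-to-$p$ torsion inside a pro-$p$ group. Hence this intersection is trivial. Here I would also use that $p\nmid|\mu|$, which should follow from the hypothesis, because $p-1$ exceeds $h$-type bounds and primes dividing $|\pi_1|$ are at most the Coxeter number.

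For surjectivity, I would use that $\pi$ restricts on $S\sc$, and on each $R_{K_{\tilde\alpha}/K}\G_m$ via coroots, to a map of the form $t\mapsto t^{n}$ with $n$ prime to $p$. On pro-$p$ groups such maps are bijective. A cleaner version of this argument works at the level of the Moy-Prasad filtrations: the isogeny is étale since $p\nmid|\mu|$, so it induces isomorphisms on $T_{0+}$ by Hensel's lemma / \cite[Ch.~B]{KP}. I expect this surjectivity, together with matching the filtrations $T\sc(K)_r\leftrightarrow T(K)_r$, to be the main obstacle. If necessary I will check it after base change to a tame splitting field and then take Galois invariants, using Proposition \ref{invariants}-style descent.

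Next, the root groups $U_{\alpha,x,r}$ of $G\sc$ map isomorphically onto those of $G$, by the identification of root data in Subsection \ref{qsplit-review}. Then the Iwahori factorizations from \cite[Proposition 13.2.5]{KP} give $I\sc\iso I$, and similarly $G\sc_{x,r}\iso G_{x,r}$ for all $r>0$. Consequently $T(K)_{0+}=T(K)_{\frac1{p-1}+}$. The torus $T$ is weakly induced, since $T\sc\to T$ is a prime-to-$p$ isogeny and wild inertia therefore acts on $X^*(T)\otimes\Z_{(p)}$ compatibly; alternatively one can simply transport everything along the isomorphism and apply Corollary \ref{polytope} to $G\sc$. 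In either case Corollary \ref{polytope} applies and gives the polytope $U$. Nonemptiness of $U$ is exactly the stated inequality, and distinctness of the $\omega_x$ follows because different $x$ give different values $\langle\alpha,v\rangle$, which appear in $\omega_x$ on root subgroups.
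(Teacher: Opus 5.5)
Your overall strategy (pass to $G\sc$, use that $\ker(G\sc\to G)$ has order prime to $p$, identify root groups, compare tori through the Iwahori factorizations) matches the paper's. However, there is a genuine gap: the step you flag as the main obstacle and postpone is the substantive half of the proof. A group isomorphism $T\sc(K)_{0+}\iso T(K)_{0+}$ does not by itself give $T(K)_{0+}=T(K)_{\frac1{p-1}+}$. Nor does it show that the $p$-valuation transported from $I\sc$ to $I$ is the Moy--Prasad filtration of $G$ at $x$, rather than that of $G\sc$. For both you need $\pi(T\sc(K)_r)=T(K)_r$ for every $r>0$.

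The inclusion $\sub$ is formal, since characters of $T$ pull back to characters of $T\sc$. The paper phrases it via functoriality of the minimal congruent filtration and $T(K)_r^{\mathrm{mc}}\sub T(K)_r^{\mathrm{std}}$. The idea you are missing for $\sups$ is the following. The map $X^*(T)\into X^*(T\sc)$ has cokernel dual to $\ker\pi$, hence of order $n$ prime to $p$, so $\chi_0^n$ lies in $X^*(T)$ for every $\chi_0\in X^*(T\sc)$. Also $v(y^n-1)=v(y-1)$ whenever $v(y-1)>0$ and $p\nmid n$. Applying this to $y=\chi_0(s)$, for $s$ the preimage in $T\sc(K)_{0+}$ of $t\in T(K)_r$, gives $s\in T\sc(K)_r$. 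Your fallback of checking over a tame splitting field and taking invariants can be made to work, but only with this same input. Over the splitting field, $\pi$ is an integer matrix of determinant prime to $p$, hence lies in $\GL_\ell(\Z_p)$ and acts isometrically on principal units. The proposal does not supply this.

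Two of your sub-arguments are also wrong as stated.

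First, the isogeny $T\sc\to T$ is not of the form $t\mapsto t^n$ on the factors $R_{K_{\tilde\alpha}/K}\G_m$. For $\SL_3\to\mathrm{PGL}_3$, in coroot/coweight coordinates it is given by the Cartan matrix. Surjectivity on pro-$p$ parts should instead come from one of two routes:
\begin{itemize}
\item the sequence $T\sc(K)\to T(K)\to H^1(K,\ker\pi)$, whose last term is finite of prime-to-$p$ order (the paper runs this argument for $G\sc(K)\to G(K)\to H^1(K,Z)$ with maximal pro-$p$ subgroups);
\item an isogeny $\psi$ with $\pi\circ\psi=[n]$.
\end{itemize}

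Second, weak inducedness of $T$ does not follow from anything you say about $X^*(T)\otimes\Z_{(p)}$. A permutation structure after localization does not give a $\Z$-basis permuted by wild inertia. A correct reason is that $T$ splits over the same field as $T\sc$, and that field is tame because $e(K_{\tilde\alpha}/K)\le e_{K_{\tilde\alpha}}<p$. Even granting this, applying Corollary \ref{polytope} to $G$ directly still requires $T(K)_{0+}=T(K)_{\frac1{p-1}+}$, which again reduces to the filtration comparison above.
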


The key difference between Corollary \ref{polytope} and Corollary \ref{qsplit-ss} is that we have dropped the technical condition that $T$ is weakly induced with $T(K)_{0+}=T(K)_{\frac1{p-1}+}$ in the latter, and replaced it with the assumption that $G$ is semisimple.  The idea is that $G$ being semisimple exerts enough control over the torus to allow us to drop our earlier additional conditions.

\begin{proof}
To prove this, we will use an argument from the beginning of the proof of \cite[Proposition 12.1]{totaro}, which was also used in the more recent proof of \cite[Lemma 5.4]{dottolehung}.

Let $Z$ be the kernel of the map $G\sc\to G$, which is a finite subgroup of the center of $G\sc$.  As stated in \cite{totaro}, any prime number $q$ which divides the order of the center of a simply connected semisimple group is at most the maximum of the Coxeter numbers of its simple factors in the algebraic closure.  In particular, this means that $q\leq \max_{\tilde\alpha\in \tilde\Phi} \mathrm{ht}(\tilde\alpha)+1$.  Thus, by our hypotheses, $Z$ has order prime to $p$.

We now have an exact sequence
$$Z(K)\to  G\sc(K)\to G(K)\to H^1(K,Z)$$
by using Galois cohomology, and what we just said shows that $Z(K)$ is a finite abelian group with order prime to $p$, that hence has no pro-$p$ subgroups.  It is a standard fact that $H^1(K,Z)$ is finite (e.g., see \cite[Theorem 6.16]{PR}); we claim that $Z$ having order prime-to-$p$ implies that this finite abelian group $H^1(K,Z)$ also has order prime-to-$p$.  Indeed, we know that $H^1(K,Z)=\dirlim_{L/K\text{ finite}}H^1(\Gal(L/K), Z(L))$, and thus there is some finite $L/K$ for which $H^1(K,Z)=H^1(\Gal(L/K), Z(L))$ as the former is finite.  Now the fact that $Z(L)$ is finite with order prime-to-$p$ gives us immediately that there are no elements of order $p$ in $H^1(K,Z)$.  Thus, our exact sequence $Z(K)\to G\sc(K)\to G(K)\to H^1(K,Z)$ has on its ends finite abelian groups of order prime-to-$p$.

To fix notation, choose $x$ to be a point lying within $U$, so $I=G_{x,0+}$ and $I\sc=(G\sc)_{x,0+}$.  Because $G\sc$ is simply connected, $G\sc(K)^0=G\sc(K)$ (using the notation of \cite[Definition 2.6.23]{KP}) by \cite[Proposition 11.5.4]{KP}, hence $I\sc$ is a maximal pro-$p$ subgroup of $G\sc(K)$ by \cite[Proposition 13.5.2]{KP}.  Because $Z(K)$ has no pro-$p$ subgroups, the map $I\sc\to G(K)$ is injective, and has image a maximal pro-$p$ subgroup of $G(K)$ since $H^1(K,Z)$ has no pro-$p$ subgroups.  
Thus, the natural isogeny $I\sc\to I$ must be an isomorphism as the latter is a pro-$p$ subgroup of $G(K)$.

In order to justify that the induced $p$-valuations on $I$ are coming from the Moy-Prasad filtrations on $G$ and not just from those on $G\sc$, we must show that the isogeny of tori $\phi:T\sc\to T$ arising from $G\sc\to G$ preserves the minimal congruent filtration on the respective tori, i.e., that $\phi(T\sc(K)_r^\mathrm{mc})=T(K)_r^\mathrm{mc}$ for any $r>0$.  Since $T\sc$ is an induced torus, we know by \cite[Corollary B.10.13]{KP} that $T\sc(K)^\mathrm{mc}_r=T\sc(K)^\mathrm{std}_r$ is just the standard filtration.  By the functoriality of the minimal congruent filtration (see \cite[Proposition B.10.10]{KP}), we know that
$$\phi(T\sc(K)_r^\mathrm{std})=\phi(T\sc(K)_r^\mathrm{mc})\sub T(K)_r^\mathrm{mc}.$$
Since the minimal congruent filtration is admissible by \cite[Corollary B.10.13]{KP}, we know that $T(K)_r^\mathrm{mc}\sub T(K)_r^\mathrm{std}$ holds by \cite[Lemma B.10.4]{KP}, so it is sufficient to note that $\phi(T\sc(K)_r^\mathrm{std})\sups T(K)_r^\mathrm{std}$.  To see this claim, note that our isogeny $\phi:T\sc\to T$ induces an injection $X^*(T)\into X^*(T\sc)$ with finite cokernel.  Moreover, because $Z=\ker\phi$ has order prime-to-$p$, we find that the cokernel of $X^*(T)\into X^*(T\sc)$ has order prime-to-$p$; to fix notation, let $n$ be this order.  Suppose that $t\in T(K)_{0+}$ is any element of $T(K)_r^\mathrm{std}$, so by definition $v(\chi(t)-1)\geq r$ for all $\chi\in X^*(T)$.  To simplify notation, we identify $t$ with its unique preimage in $T\sc(K)_{0+}$.  Fix any $\chi_0\in X^*(T\sc)$, so $\chi_0^n$ lies in the image of $X^*(T)$.  By hypothesis, we know $v(\chi_0(t)^n-1)\geq r$.  Since $t\in T\sc(K)_{0+}$, write $\chi_0(t)=1+\varpi_E^i u$ for some finite $E/K$ which splits $T\sc$, where $u\in \O_E^\times$.  Now
$$v(\chi_0(t)^n-1)=v((1+\varpi_E^i u)^n-1)=v(n\varpi_E^iu)=v(\varpi_E^iu)=v(\chi_0(t)-1),$$
so we conclude that in fact $v(\chi_0(t)-1)\geq r$ for all $\chi_0\in X^*(T\sc)$, and hence $t$ lies in $\phi(T\sc(K)_r^\mathrm{std})$.  This proves that $\phi(T\sc(K)_r^\mathrm{mc})=T(K)_r^\mathrm{mc}$ for all $r>0$.
\end{proof}

\begin{note}
We note as a corollary to the proof that, under the hypotheses of Corollary \ref{qsplit-ss}, the pro-$p$ Iwahori $I\leq G(K)$ associated to $C$ is a maximal pro-$p$ subgroup of $G(K)$ (not just of $G(K)^0$, as is provided by \cite[Proposition 13.2.5]{KP}).
\end{note}

\begin{rem}
We note that our results subsume part of \cite[Proposition 12.1]{totaro}.  To show this, we will briefly recall the setup from that proposition.  Let $G$ be a connected semisimple group over $K$.  Then $G$ becomes quasisplit after base changing to $G_E$ over a finite unramified extension $E/K$ by \cite{Steinberg}.  The universal covering of $G_E$ is a product of restrictions of scalars of absolutely simple quasisplit groups $H_L$ over various finite extensions $L/E$.  Suppose that $p-1>h(H_L)e_L$, where $h(H_L)$ is a generalized Coxeter number defined to be
$$h(H_L)\coloneqq\max_{\tilde\alpha\in \tilde\Phi_{H_L}^+}(\mathrm{ht}(\tilde\alpha)+1)\cdot [L_{\tilde\alpha}:L]$$
if $H_L$ has reduced relative root system, and is twice this value if $H_L$ has nonreduced relative root system.  The relevant result of Totaro is that it follows that every pro-$p$ subgroup of $G(K)$ admits a $p$-valuation.

Given the same setup as in \cite{totaro}, we note that Corollary \ref{qsplit-ss} constructs a $p$-valuation on the pro-$p$ Iwahori of $G$ which is moreover saturated. Indeed, by Remark \ref{gen_invariants}, it is sufficient to show that we get a $p$-valuation on a pro-$p$ Iwahori $I$ of $H_L$ for any simple factor $H_L$ that is saturated.  Fix such a simple factor $H_L$.  Indeed, we first note that in Totaro's proof, he mentions that the condition $p-1>h(H_L)e_L$ implies that the maximal torus $T$ of $H_L$ splits over a finite extension $\tilde K/K$ with $e_{\tilde K}<p-1$, which means our condition on the torus in all the results above is automatically satisfied.  Next, we note that if $\tilde\alpha$ is a root of $H_L$ of type R1, then $e_L\cdot [L_{\tilde\alpha}:L]\geq e_{L_{\tilde\alpha}}$ implies automatically that $p-1>(\mathrm{ht}(\tilde\alpha)+1)\cdot e_{L_{\tilde\alpha}}$.  If there is some $\tilde\alpha$ which lies over a root of type R2 in the relative root system, then by the definition of the generalized Coxeter number in Totaro, we find that
$$p-1>2(\mathrm{ht}(\tilde\alpha)+1)\cdot [L_{\tilde\alpha}:L]\cdot e_L>(2\mathrm{ht}(\tilde\alpha)+1)e_{L_{\tilde\alpha}},$$
which implies as claimed that we get a saturated $p$-valuation on $I$.
\end{rem}

\section{The Lie Algebra}

In this section, we seek to generalize the arguments of \cite[Section 3]{dottolehung} to our case and compute the graded Lie algebras associated to the $p$-valuations on the Moy-Prasad filtration subgroups $G_{x,\frac1{p-1}+}$ using a more abstract argument.  This argument also yields a computation of the $\Z_p$-Lie algebra $L_{\omega_x}(G_{x,\frac1{p-1}+})_{\frac1{p-1}+}$, which in turn is a generalization of \cite[Lemma 5.3.1]{BHH+} (which only applied to $G=\GL_2$ and $F/\Q_p$ unramified).  We should also note for posterity that a similar computation to that of \cite[Section 3]{dottolehung} was explicitly performed in \cite[Section 3]{koziol-bharatha-bryan}.

As in Section \ref{moy-prasad}, we will first perform our computations in the quasisplit case, and then proceed via unramified descent.  However, we must handle some generalities before beginning work on any specific cases.


\subsection{Generalities on Lie algebras}


For this subsection, let $K/\Q_p$ be a finite extension and $G$ a linear algebraic $K$-group.

\begin{lem}
Let $X$ be a smooth, separated $K$-scheme of finite type.  Then $X(K)$ has a natural structure as a (locally analytic) $K$-manifold, in the sense of \cite[Section 8]{schneiderbook}, and $\dim_K(X(K))=\dim(X)$.
\end{lem}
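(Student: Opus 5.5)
The plan is to build the manifold structure chart by chart, using the implicit function theorem for $K$-analytic maps in the form given in \cite[Part A]{schneiderbook}, and then to glue the charts.

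\emph{Step 1 (reduction to the affine case).} Cover $X$ by finitely many affine open subschemes $X_i$. Since the $X_i$ are Zariski open, each $X_i(K)$ is open in $X(K)$ for the topology coming from $K$, and the $X_i(K)$ cover $X(K)$. Separatedness is needed so that this topology on $X(K)$ is Hausdorff, as the definition of a manifold in \cite[Section 8]{schneiderbook} requires.

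\emph{Step 2 (local charts).} Shrinking further, we may assume $X=\Spec K[T_1,\dots,T_N]/(f_1,\dots,f_m)$ and that $X$ has pure dimension $d$ near a given point $a\in X(K)$. Smoothness means the Jacobian $(\partial f_j/\partial T_k)(a)$ has rank $N-d$. So we can choose $N-d$ of the $f_j$, say $f_1,\dots,f_{N-d}$, and $N-d$ of the coordinates, such that the corresponding minor is invertible at $a$.

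Apply the $K$-analytic implicit function theorem to the polynomial map $F=(f_1,\dots,f_{N-d})\colon K^N\to K^{N-d}$. This gives an open neighbourhood $W$ of $a$ on which $F^{-1}(0)\cap W$ is the graph of a locally analytic function of the remaining $d$ coordinates. Projection onto those $d$ coordinates is then a chart of dimension $d$.

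It remains to see that, near $a$, $X(K)$ coincides with $F^{-1}(0)$. At the scheme level, $V(f_1,\dots,f_{N-d})$ is smooth of dimension $d$ at $a$, hence its local ring at $a$ is a regular domain of dimension $d$. That ring surjects onto the local ring of $X$ at $a$, which also has dimension $d$. A surjection from a $d$-dimensional local domain onto a ring of the same dimension is an isomorphism, so the two schemes agree on a Zariski neighbourhood of $a$. Intersecting $W$ with that neighbourhood gives $X(K)\cap W=F^{-1}(0)\cap W$.

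\emph{Step 3 (compatibility and dimension).} Transition maps between charts are compositions of polynomial maps and implicit-function inverses, so they are locally analytic. This shows the charts form an atlas independent of the choices made, and in particular the manifold structure does not depend on the chosen affine embeddings. The resulting charts have dimension $d=\dim X$ at every point, which gives $\dim_K X(K)=\dim X$.

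The main obstacle is the scheme-theoretic comparison at the end of Step 2 (the Zariski-local complete intersection property of smooth schemes), together with one caveat. If $X(K)$ is empty, or meets only some components, the dimension equality has to be read with the usual convention; for the groups $G$ we care about, $G(K)$ is Zariski dense, so this issue does not arise.
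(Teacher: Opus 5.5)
Your proposal is correct and takes the route the paper points to. The paper only sketches the argument: it topologizes $X(K)$ through an affine embedding $X(K)\into K^n$ and says the rest goes exactly as in the complex case of Hartshorne's Appendix B, which is your argument by affine covers, the Jacobian criterion, the $K$-analytic implicit function theorem, and gluing, and your write-up supplies details the paper omits (the Zariski-local identification of $X$ with $V(f_1,\dots,f_{N-d})$ near $a$, and the correct caveat that $\dim_K X(K)=\dim X$ is really a pointwise statement at points of $X(K)$, harmless for $G(K)$ since $G$ is equidimensional and $e\in G(K)$), with only the cosmetic slip that transition maps between different affine opens involve regular functions $h/g^n$ with $g(a)\neq 0$ rather than just polynomials, which are still locally analytic.
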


\begin{proof}
This is certainly well-known, however the author could not find a precise reference; the proof proceeds exactly as in the case when $X$ is a $\C$-scheme, where this is explained in \cite[Appendix B]{hartshorne} or even \cite{gaga}.  For convenience, we note that the analytic topological structure is built as in \cite{con12} by writing $X=\Spec R$ for $R\iso K[x_1,\ldots, x_n](f_1,\ldots, f_m)$ (since $X$ is of finite type), which induces an embedding $X(K)\into K^n$, from which $X(K)$ is endowed with the subspace topology coming from the usual topology on $K$.  One can show the result is independent of this construction and makes $X(K)$ a manifold over $K$.
\end{proof}

We then get the following immediate corollary.

\begin{cor}
The group $G(K)$ is an analytic $K$-group, in the sense of \cite[Section 13]{schneiderbook}, with $\dim_K(G(K))=\dim(G)$.
\end{cor}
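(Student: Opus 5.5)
The plan is to deduce this directly from the preceding lemma, applied to the smooth separated $K$-scheme of finite type underlying $G$, and then check that the group operations are analytic. First, since $G$ is a linear algebraic group over a field of characteristic zero, Cartier's theorem makes it smooth. It is also affine, hence separated, and of finite type. The lemma therefore gives $G(K)$ the structure of a locally analytic $K$-manifold with $\dim_K G(K)=\dim G$. Concretely, I would fix a closed embedding $G\into\GL_N\sub \mathbb A^{N^2+1}$. The topology on $G(K)$ is then the subspace topology from $K^{N^2+1}$, and the charts come from the implicit function theorem applied to the defining equations at each point, using smoothness to get the Jacobian rank condition.

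Second, I need the multiplication $m:G(K)\times G(K)\to G(K)$ and the inversion $G(K)\to G(K)$ to be locally analytic, as required by the definition of a Lie group over $K$ in \cite[Section 13]{schneiderbook}.

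\begin{itemize}
\item These maps are $K$-points of morphisms of $K$-schemes $G\times_K G\to G$ and $G\to G$.
\item In the embedding, both are given by polynomials in the matrix coordinates and $\det^{-1}$, so they are restrictions of polynomial maps $K^{2(N^2+1)}\to K^{N^2+1}$.
\item Polynomial maps are analytic. The restriction of an ambient analytic map to analytic submanifolds, landing in an analytic submanifold, is analytic in the charts: compose with the chart inverses, which are analytic by the implicit function theorem.
\end{itemize}

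I would also note that $(G\times_K G)(K)=G(K)\times G(K)$, and that the manifold structure the lemma gives on the product scheme agrees with the product manifold structure. This holds because the embedding $G\times G\into K^{N^2+1}\times K^{N^2+1}$ induces the product charts.

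The main obstacle is this compatibility: morphisms of schemes should induce locally analytic maps, and the product structures should agree. The lemma as stated only produces a manifold and does not explicitly assert functoriality. I would handle it by proving the small functoriality statement in the same coordinate way as in the lemma's proof. A morphism $f:X\to Y$ of affine finite-type schemes is, after choosing embeddings, the restriction of a polynomial map $K^n\to K^m$, so $f(K)$ is analytic in charts. Independence of the choice of embedding was already asserted in the lemma. Given this, the corollary follows immediately, with the dimension statement inherited from the lemma.
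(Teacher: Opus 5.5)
Your proposal is correct and is essentially the paper's argument. The paper states this as an immediate corollary of the preceding lemma applied to the smooth, affine, finite-type scheme $G$; your checks that multiplication and inversion are restrictions of polynomial maps on $K^{N^2+1}$ (hence locally analytic), and that the product charts are compatible, simply make explicit the step the paper leaves unstated.
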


To fix notation, we write $G=\Spec A$, where $A$ is now naturally a Hopf $K$-algebra.  Let $\aug:A\to K$ denote its augmentation map.

Consider now the following two Lie algebras, each endowed with a natural Lie bracket (which, for our purposes, we need not define carefully).  On the algebraic side, following \cite[Section 10b]{milneAG}, we have 
$$\Lie(G)(K)\coloneqq \ker\left(G(K[\eps]/(\eps^2))\xto{\eps\mapsto 0} G(K)\right).$$
By \cite[Proposition 10.28]{milneAG}, we have a $K$-linear isomorphism $\Lie(G)(K)\xto{\sim} \Der_\aug(A,K)$, $\aug+\eps D\mapsto D$, where $\Der_\aug(A,K)$ consists of \textbf{$\aug$-derivations}, i.e., the $K$-linear maps $D:A\to K$ satisfying for all $a,b\in A$ the condition
$$D(ab)=D(a)\aug(b)+\aug(a)D(b).$$

On the analytic side, the $K$-analytic group $G(K)$ has Lie algebra given by
$$\Lie(G(K))=T_e(G(K))$$
the tangent space at the identity, following \cite[Section III.13]{schneiderbook}.  By \cite[Corollary 18.8]{schneiderbook}, we know $G(K)$ is paracompact, so we have by \cite[Proposition 9.18]{schneiderbook} that the $K$-linear map
$$T_e(G(K))\xto{\sim} \Der_e(G(K),K)$$
sending $t\mapsto (\Delta_t)$ is an isomorphism.  Here, $\Der_e(G(K),K)$ is the space of \textbf{$e$-derivations}, i.e., the $K$-linear maps $\Delta:C\an(G(K),K)\to K$ satisfying $\Delta(fg)=\Delta(f)g(e)+f(e)\Delta(g)$ for all $f,g\in C\an(G(K),K)$ (locally $K$-analytic functions $G(K)\to K$); given $t\in T_e(G(K))$, the $e$-derivation $\Delta_t$ is defined by
$$\Delta_t(f)\coloneqq (d_ef)(t)\in K$$
for $f\in C\an(G(K),K)$.

\begin{cons}
We now seek to construct a natural map
$$\Lie(G(K))\to \Lie(G)(K)$$
which will ultimately be an isomorphism of Lie $K$-algebras.  While one could conclude using a shorter argument based on \cite[Theorem 10.23]{milneAG} that these are abstractly isomorphic, we want to have an actual map with which we can perform computations; later, we will be comparing $\O_K$-lattices within these two Lie algebras via the map we construct.

One can summarize our identifications above as
$$\Lie(G(K))\iso \Der_e(G(K),K)\leq \Hom_K(C\an(G(K),K),K)$$
and
$$\Lie(G)(K)\iso \Der_\aug(A,K)\leq \Hom_K(A,K)$$
where $G(K)=\Hom_{K-\alg}(A,K)$.  Note that any element $a\in A$ is a polynomial function on $G(K)$ (merely by evaluation), and in particular is locally analytic; thus, we have a map
$$A\into C\an(G(K),K)$$
which is an inclusion as $A$ is reduced.  (We will often identify $A$ with its image under this map.)  This induces a map 
$$\Hom_K(C\an(G(K),K),K)\to \Hom_K(A,K).$$
\end{cons}

\begin{lem}
This restricts to a map 
$$\Phi_G:\Der_e(G(K),K)\to \Der_\aug(A,K),$$
which is functorial in $G$.
\end{lem}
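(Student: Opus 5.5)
The plan is to show two things: that the restriction of an $e$-derivation along $A\into C\an(G(K),K)$ is an $\aug$-derivation, and that this restriction is natural with respect to morphisms of algebraic groups.

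Let $\Delta\in \Der_e(G(K),K)$, and write $\Phi_G(\Delta)\coloneqq \Delta|_A$ for the restriction along $A\into C\an(G(K),K)$. This is certainly a $K$-linear map $A\to K$. The only point to check is the Leibniz rule, and it reduces to two compatibilities of the inclusion $A\into C\an(G(K),K)$.

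First, the inclusion is a homomorphism of $K$-algebras. Indeed, for $a,b\in A$ and $g\in G(K)=\Hom_{K-\alg}(A,K)$, evaluating gives $(ab)(g)=g(ab)=g(a)g(b)$, so the product in $A$ goes to the pointwise product of functions.

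Second, evaluation at the identity corresponds to the augmentation. The identity $e\in G(K)$ is by definition the $K$-point $\aug:A\to K$, so $a(e)=\aug(a)$.

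Combining these, for $a,b\in A$ we get
$$\Delta(ab)=\Delta(a)\,b(e)+a(e)\,\Delta(b)=\Delta(a)\aug(b)+\aug(a)\Delta(b).$$
Hence $\Delta|_A\in\Der_\aug(A,K)$, and $\Phi_G$ is well defined and $K$-linear.

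For functoriality, let $f:G\to H$ be a morphism of linear algebraic $K$-groups, with comorphism $f^*:B\to A$, where $H=\Spec B$. On the algebraic side, the induced map $\Der_\aug(A,K)\to\Der_{\aug}(B,K)$ is $D\mapsto D\circ f^*$. On the analytic side, $f(K):G(K)\to H(K)$ is locally analytic, since in coordinates it is given by polynomials. So pullback $\varphi\mapsto \varphi\circ f(K)$ maps $C\an(H(K),K)\to C\an(G(K),K)$, and the induced map on $e$-derivations is $\Delta\mapsto \Delta\circ f(K)^*$. This is compatible with $T_e$ via $d_e f$, by the chain rule in \cite[Section 9]{schneiderbook}.

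Functoriality then amounts to the commutativity of the square
$$B\into C\an(H(K),K)\xto{f(K)^*}C\an(G(K),K)\quad\text{versus}\quad B\xto{f^*}A\into C\an(G(K),K).$$
Both composites send $b\in B$ to the function $g\mapsto g(f^*(b))=(f(K)(g))(b)$ on $G(K)$. Restricting a derivation along either composite therefore gives the same result, which is exactly $\Phi_H(\Delta\circ f(K)^*)=\Phi_G(\Delta)\circ f^*$.

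I expect no real obstacle here. The one step needing care is the identification of the analytic functorial map on derivations, namely that it is precomposition with pullback of locally analytic functions. That identification is immediate from the definition $\Delta_t(\varphi)=(d_e\varphi)(t)$ and the chain rule $d_e(\varphi\circ f)=d_{e}\varphi\circ d_ef$.
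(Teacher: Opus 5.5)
Your proposal is correct and follows the paper's proof: restriction along $A\into C\an(G(K),K)$ is an $\aug$-derivation because that inclusion is a $K$-algebra map and $e\in G(K)$ is the point $\aug$. For functoriality, the paper just says it is clear; your commutative square $f(K)^*\circ(B\into C\an(H(K),K)) = (A\into C\an(G(K),K))\circ f^*$ fills that in correctly.
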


\begin{proof}
Recall that $e\in G(K)=\Hom_{K-\alg}(A,K)$ is precisely the augmentation map $\aug:A\to K$.  Thus, if we have an $e$-derivation $\Delta$, which means
$$\Delta(fg)=\Delta(f)g(e)+f(e)\Delta(g)$$
for $f,g\in C\an(G(K),K)$, it follows immediately that for $a,b\in A$, 
$$\Delta(ab)=\Delta(a)\aug(b)+\aug(a)\Delta(b).$$
Thus, $\Delta$ restricts to an $\aug$-derivation of $A$.  Functoriality in $G$ is clear.
\end{proof}

\begin{prop} \label{explicit-iso}
The map $\Phi_G$ is an isomorphism of Lie $K$-algebras, providing us with a concrete isomorphism between the analytically and algebraically defined Lie $K$-algebras
$$\Lie(G(K))\iso \Der_e(G(K),K)\xto{\Phi_G} \Der_\aug(A,K)\iso \Lie(G)(K).$$
\end{prop}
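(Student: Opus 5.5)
We need to show three things about $\Phi_G$: it is well defined and linear (already done in the preceding lemma), it is bijective, and it preserves brackets. The plan is to reduce as much as possible to $G=\GL_N$ by functoriality.

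\textbf{Step 1: bijectivity for smooth affine $X$.} Both sides have $K$-dimension $\dim G$. The analytic side has this dimension by the corollary on $\dim_K G(K)$. The algebraic side has it because $G$ is smooth in characteristic $0$, so $\Der_\aug(A,K)\iso(\mf m_e/\mf m_e^2)^\vee$ has dimension $\dim G$. It therefore suffices to prove that $\Phi_G$ is injective.

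Choose a closed embedding $G\into\mathbb A^n$ with coordinate functions $x_1,\ldots,x_n\in A$. Near $e$, the analytic manifold $G(K)$ is cut out in $K^n$ with the $x_i$ as ambient coordinates. Hence a tangent vector $t\in T_e(G(K))$ is determined by the numbers $(d_ex_i)(t)=\Delta_t(x_i)$, since the $d_ex_i$ span $T_e^*$. These numbers are exactly the values of $\Phi_G(\Delta_t)$ on $x_i\in A$. So if $\Phi_G(\Delta_t)=0$ then $t=0$, which gives injectivity and hence bijectivity. The same argument works for any smooth affine $K$-scheme.

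\textbf{Step 2: compatibility of brackets.} By functoriality (the preceding lemma), a closed embedding $\iota:G\into\GL_N$ gives a commutative square
\[\Phi_{\GL_N}\circ d\iota^{\an}=d\iota^{\alg}\circ\Phi_G.\]
Both $d\iota^{\an}$ and $d\iota^{\alg}$ are injective Lie algebra homomorphisms; this is standard on the algebraic side, and on the analytic side \cite[Section 13]{schneiderbook} gives functoriality of the Lie algebra of analytic groups. So if $\Phi_{\GL_N}$ preserves brackets, then $\Phi_G$ does too, because the bracket of $\Phi_G(a),\Phi_G(b)$ is computed inside $\Lie(\GL_N)(K)$, where $d\iota^{\alg}$ is injective.

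For $\GL_N$, identify both Lie algebras with $M_N(K)$.
1. On the analytic side, $t\in T_e$ corresponds to the matrix $(\Delta_t(x_{ij}))$. The bracket is computed from the second-order term of the group commutator, or equivalently via $\exp$ near the identity \cite[Section 18]{schneiderbook}, and comes out to the matrix commutator.
2. On the algebraic side, $\eps$-points $1+\eps X$ give the commutator bracket as well \cite[Section 10]{milneAG}.
3. Under these identifications, $\Phi_{\GL_N}$ sends $t$ to the derivation $x_{ij}\mapsto\Delta_t(x_{ij})$, which is the identity on $M_N(K)$.

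\textbf{Main obstacle.} The hardest step is pinning down the analytic bracket on $\Lie(\GL_N(K))$ as $XY-YX$ with the conventions of \cite{schneiderbook}. That text defines the bracket through the Lie algebra of invariant vector fields, or derivations. My first attempt would be to use the left-invariant vector field attached to $X$, namely the derivation $f\mapsto d_g f(gX)$ on $C\an$. Applying the commutator of two such vector fields to the coordinate functions $x_{ij}$ gives the matrix commutator directly. The algebraic side, computed as the commutator of left-invariant derivations of $A$, gives the same formula on the $x_{ij}$, and Step 1 ensures that the values on the $x_{ij}$ determine everything.
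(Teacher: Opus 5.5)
Your proposal is correct and follows essentially the paper's argument: the paper also identifies both sides with $M_N(K)$ for $\GL_N$, using that the linear coordinates $x_{ij}$ are their own differentials so that $\Delta_M(x_{ij})=m_{ij}$. It then embeds $G\into\GL_N$, gets injectivity of $\Phi_G$ from the commutative square plus a dimension count, and transfers bracket compatibility through the vertical maps. The only cosmetic difference is that you derive injectivity directly from the ambient coordinates of an affine embedding rather than from the $\GL_N$ square, which is the same observation applied one step earlier.
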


\begin{proof}
We first seek to show this when $G=\GL_n$, i.e., that $\Phi_{\GL_n}$ is an isomorphism of Lie algebras over $K$.  Indeed, since $\GL_n(K)\sub M_{n\times n}(K)$ is an open subset, we have a natural identification
$$\Lie(\GL_n(K))=T_e(M_{n\times n}(K))=M_{n\times n}(K);$$
the exercise on \cite[p. 101]{schneiderbook} shows that this identifies the Lie brackets on both sides, where the right side has the Lie bracket coming from its associative algebra structure.  Similarly, \cite[p. 188]{milneAG} shows that a derivation $D\in D_\aug(B,K)$ (for $B=K[x_{ij},\det(x_{ij})\inv]$ the ring of regular functions on $\GL_n$) corresponds bijectively to an element $(a_{ij})\in M_{n\times n}(K)$ by $a_{ij}\coloneqq D(x_{ij})$, and this $K$-linear isomorphism
$$\Lie(G)(K)\iso \Der_\aug(A,K)\iso M_{n\times n}(K)$$
identifies the Lie brackets on both sides.  We then claim that the diagram
\[\begin{tikzcd}[row sep=tiny]
	{\Lie(\GL_n(K))} && {\Lie(\GL_n)(K)} \\
	& {M_{n\times n}(K)}
	\arrow["{\Phi_{\GL_n}}", from=1-1, to=1-3]
	\arrow["\sim", from=1-1, to=2-2]
	\arrow["\sim", from=2-2, to=1-3]
\end{tikzcd}\]
commutes, which would prove that $\Phi_{\GL_n}$ is an isomorphism of $K$-Lie algebras.  Indeed, say $\Delta\in \Der_e(\GL_n(K),K)$, so there exists a unique $M=(m_{ij})\in M_{n\times n}(K)$ such that $\Delta=\Delta_M$, i.e., $\Delta(f)=(d_ef)(M)$ for $f\in C\an(G(K),K)$.  It is now sufficient to show for any $i,j$ that $\Delta(x_{ij})=m_{ij}$, since this would prove that $\Delta$ restricts to the derivation in $\Der_\aug(A,K)$ uniquely determined by $M$.  This follows from the definition on \cite[p. 57]{schneiderbook}: we have the chart
$$\iota: \GL_n(K)\into M_{n\times n}(K),$$
and $\Delta(x_{ij})=(d_e x_{ij})(M)$ is defined by
$$d_ex_{ij}:T_e(\GL_n(K))\xto{\Theta_\iota\inv} M_{n\times n}(K)\xto{D_{I_n}(x_{ij}\circ \iota\inv)}K.$$
This exactly says that $(d_ex_{ij})(M)$ is given by $D_{I_n}(x_{ij}\circ \iota\inv)(M)$, where $D_{I_n}(x_{ij}\circ \iota\inv)$ is the derivative at $I_n$ of $x_{ij}\circ \iota\inv$, which is defined on an open neighborhood of $I_n\in M_{n\times n}(K)$.  But this extends to a linear map $x_{ij}:M_{n\times n}(K)\to K$ defined on the full vector space, hence is its own derivative; this shows that $D_{I_n}(x_{ij}\circ \iota\inv)(M)=m_{ij}$, as desired.  Thus, $\Phi_{\GL_n}$ is an isomorphism of Lie $K$-algebras.

For our more general case, since $G$ is a linear algebraic group, we can pick an embedding $G\into \GL_n$, which induces an embedding $G(K)\into \GL_n(K)$.  We then have a commutative diagram
\[\begin{tikzcd}
	{\Lie(\GL_n(K))} & {\Lie(\GL_n)(K)} \\
	{\Lie(G(K))} & {\Lie(G)(K).}
	\arrow["{\Phi_{\GL_n}}", from=1-1, to=1-2]
	\arrow[hook, from=2-1, to=1-1]
	\arrow["{\Phi_G}", from=2-1, to=2-2]
	\arrow[hook, from=2-2, to=1-2]
\end{tikzcd}\]
where we now know that the top arrow is an isomorphism; this immediately implies that $\Phi_G$ is injective.  This is now a $K$-linear injective map of $K$-vector spaces of equal dimension (both equal to $\dim_K(G(K))=\dim(G)$ by \cite[p. 101]{schneiderbook} and \cite[Proposition 1.37]{milneAG}), hence is a $K$-linear isomorphism.  But we know the vertical arrows preserve Lie brackets by naturality, and the top arrow preserves Lie brackets by our earlier argument, so we can conclude that $\Phi_G$ is an isomorphism of Lie $K$-algebras.
\end{proof}

\begin{rem} \label{int-model-rem}
Let $\mc G=\Spec B$ be an integral model of $G$ over $\O_K$, in the sense of \cite[Definition 2.10.1]{KP}, which we assume to be a linear algebraic group over $\O_K$ in a way compatible with the structure on $G$; thus, we may also use $\aug$ to denote the augmentation map $B\to \O_K$.  By \cite[Remark 2.10.8]{KP}, the Lie algebra 
$$\Lie(\mc G)(\O_K)\coloneqq \ker\left(\mc G(\O_K[\eps]/(\eps^2)\xto{\eps\mapsto 0} \mc G(\O_K)\right)$$
is canonically identified with $\Der_\aug(B,\O_K)$ via $\aug+\eps D\mapsto D$, where $\Der_\aug(B,\O_K)$ is defined to be the $\aug$-derivations, i.e., the $\O_K$-linear maps $D$ satisfying 
$$D(ab)=D(a)\aug(b)+\aug(a)D(b)$$
for $a,b\in B$.  Then the same remark says that the canonical map $\Lie(\mc G)(\O_K)\into \Lie(G)(K)$ is injective with image an $\O_K$-lattice in this $K$-vector space.

We also note that \cite[Remark 2.10.3]{KP} says $\mc G(\O_K)$ is open in $G(K)$, hence $\mc G(\O_K)$ is also an analytic $K$-group.
\end{rem}

\subsection{The quasisplit Lie algebra} \label{qsplit-LieAlg}

In this subsection, we take the same setup as Subsection \ref{qsplit-case}, with the additional assumption that $T$ is weakly induced.  That is, $p>2$, $G$ is a quasisplit connected reductive $K$-group, $x\in \mc B(G)$ is any fixed point, $T\leq G$ is a maximally $K$-split maximal $K$-torus assumed to be weakly induced, $S\leq T$ its maximal $K$-split subtorus, $B\leq G$ is a Borel subgroup containing $T$, $\mc A(S)$ is the apartment associated to $S$, $x_0\in \mc A(S)$ is an absolutely special vertex arising from a fixed Chevalley-Steinberg system $X$, all chosen such that $x$ lies in $\mc A(S)$.  We write $x=x_0+v$ for $v\in V(S\ad)$.

\begin{rem}
Following \cite[Section 8.5]{KP} (which still applies because $G$ is quasisplit despite $K$ having proper unramified extensions; see \cite[13.3.3]{KP}, for example), there exist smooth integral $\O_K$-models $\mc G_{x,r}$ and $\mc T_r$ of $G$ and $T$, as well as $\mc U_{\alpha,x,r}$ of $U_\alpha$ for $\alpha\in \Phi^{+,\mathrm{nd}}$ for all $r\geq 0$, satisfying the following:
$$G_{x,r}=\mc G_{x,r}(\O_K),\qquad T(K)_r=\mc T_r(\O_K),\qquad U_{\alpha, x, r}=\mc U_{\alpha, x, r}(\O_K).$$
Write $\mf g_{x,r}=\Lie(\mc G_{x,r})(\O_K)$, $\mf t_r=\Lie(\mc T_r)(\O_K)$, and $\mf u_{\alpha, x, r}=\Lie(\mc U_{\alpha, x, r})(\O_K)$, all of which are $\O_K$-lattices in their respective ambient Lie $K$-algebras $\Lie(G)(K)$, $\Lie(T)(K)$, and $\Lie(U_\alpha)(K)$.  By \cite[8.5.20]{KP}, we have for any $r\geq 0$ a natural identification of $\O_K$-modules
\begin{equation}\label{OK-lattices}
\mf g_{x,r}=\bigoplus_{\alpha\in \Phi^{-,\mathrm{nd}}} \mf u_{\alpha, x, r}\oplus \mf t_r \oplus\bigoplus_{\alpha\in \Phi^{+,\mathrm{nd}}} \mf u_{\alpha, x, r}
\end{equation}
lying within
\begin{equation}\label{K-vs}
\Lie(G)(K)=\bigoplus_{\alpha\in \Phi^{-,\mathrm{nd}}} \Lie(U_\alpha)(K)\oplus \Lie(T)(K)\oplus\bigoplus_{\alpha\in \Phi^{+,\mathrm{nd}}}\Lie(U_\alpha).
\end{equation}
\end{rem}

\begin{rem}
Recall by Proposition \ref{existence} that we have a saturated $p$-valuation $\omega\coloneqq\omega_x$ on $G_{x,\frac1{p-1}+}$ given by
$$\omega(g)=\sup\{r>0\st g\in G_{x,r}\}.$$
Moreover, we have for each $r>\frac1{p-1}$ an Iwahori decomposition
$$\prod_{\alpha\in \Phi^{-,\mathrm{nd}}} U_{\alpha, x, r}\times T(K)_r\times \prod_{\alpha\in \Phi^{+,\mathrm{nd}}} U_{\alpha, x, r}\xto{\sim} G_{x,r}$$
and $\omega|_{G_{x,r}}$ is compatible with this decomposition for all $r>\frac1{p-1}$.  One can easily show using Remark \ref{compatible-saturation} and \cite[Proposition 31.2]{schneiderbook}, which gives us a way of creating explicit $\Q_p$-bases of all such Lie algebras, that there is a natural identification of $\Q_p$-vector spaces
\begin{equation}\label{Qp-vs-0}
L_\omega(G_{x,\frac1{p-1}+})=\bigoplus_{\alpha\in \Phi^{-,\mathrm{nd}}} L_\omega(U_{\alpha,x, \frac1{p-1}+})\oplus L_\omega(T(K)_{\frac1{p-1}+})\oplus\bigoplus_{\alpha\in \Phi^{+,\mathrm{nd}}} L_\omega(U_{\alpha, x, \frac1{p-1}+}).
\end{equation}
One can further use the explicit description of the filtrations $\tilde\omega$ on each of these Lie $\Q_p$-algebras from \cite[Proposition 31.2]{schneiderbook} to conclude that, within this decomposition, there is an identification of $\Z_p$-lattices
\begin{equation}\label{Zp-lattices}
L_\omega(G_{x,\frac1{p-1}+})_r=\bigoplus_{\alpha\in \Phi^{-,\mathrm{nd}}} L_\omega(U_{\alpha,x, \frac1{p-1}+})_r\oplus L_\omega(T(K)_{\frac1{p-1}+})_r\oplus\bigoplus_{\alpha\in \Phi^{+,\mathrm{nd}}} L_\omega(U_{\alpha, x, \frac1{p-1}+})_r
\end{equation}
for every $r>\frac1{p-1}$.  We note for future use that $L_\omega(G_{x,\frac1{p-1}+})_r=\log(G_{x,r})$ for all $r>\frac1{p-1}$ by \cite[p. 220]{schneiderbook}, and the same holds for each of the terms in our Iwahori decomposition \ref{Zp-lattices}.  We also remark that one can easily show that there is a natural identification of $\Q_p$-vector spaces
\begin{equation}\label{Qp-vs}
\Lie(G(K))=\bigoplus_{\alpha\in \Phi^{-,\mathrm{nd}}} \Lie(U_\alpha(K))\oplus\Lie(T(K))\oplus \bigoplus_{\alpha\in \Phi^{+,\mathrm{nd}}} \Lie(U_\alpha(K)).
\end{equation}
\end{rem}

We now seek to prove the following.

\begin{prop} \label{qsplit-LieAlg-iso}
There is a natural isomorphism of Lie $\Z_p$-algebras
$$L_\omega(G_{x,\frac1{p-1}+})_{\frac1{p-1}+}\iso \mf g_{x,\frac1{p-1}+}$$
which respects the filtrations; that is, it induces for all $r>\frac1{p-1}$ an isomorphism
$$L_\omega(G_{x,\frac1{p-1}+})_r\iso \mf g_{x,r}.$$
\end{prop}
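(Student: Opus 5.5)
The natural route goes through Lazard's identification of $L_\omega(G)$ with the Lie algebra of the analytic group. By \cite[Section 31]{schneiderbook}, for the saturated group $H\coloneqq G_{x,\frac1{p-1}+}$, which is open in $G(K)$, the $\Q_p$-Lie algebra $L_\omega(H)$ is canonically isomorphic to $\Lie(H)=\Lie(G(K))$, viewed as a $\Q_p$-Lie algebra. Concretely, $\log(g)$ corresponds to the tangent vector of the one-parameter subgroup $\Z_p\to H$, $\lambda\mapsto g^\lambda$. Composing with $\Phi_G$ from Proposition \ref{explicit-iso} gives a concrete isomorphism of $\Q_p$-Lie algebras
$$\Psi: L_\omega(H)\xto{\sim} \Lie(G(K))\xto{\Phi_G}\Lie(G)(K).$$
This map is natural in $G$: functoriality of $\Phi_G$ together with functoriality of Lazard's construction under filtration-compatible homomorphisms should give naturality with respect to closed subgroups such as $U_\alpha$ and $T$. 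Bracket compatibility is then automatic. What remains is to show that $\Psi$ carries the lattice $L_\omega(H)_r=\log(G_{x,r})$ onto $\mf g_{x,r}$ for every $r>\frac1{p-1}$.

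To do this, I would use the decompositions \eqref{Zp-lattices} and \eqref{OK-lattices}. Since $\Psi$ restricts to the corresponding maps for $U_\alpha$ and $T$ compatibly with \eqref{Qp-vs} and \eqref{K-vs}, it suffices to prove
$$\Psi(\log U_{\alpha,x,r})=\mf u_{\alpha,x,r}\quad\text{for each }\alpha\in\Phi^{\mathrm{nd}},\qquad \Psi(\log T(K)_r)=\mf t_r.$$
I would treat the three factor types separately.

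For $\alpha$ of type R1, the group $U_\alpha(K)\iso K_{\tilde\alpha}$ is a vector group, so $\log$ is the identity under $u_\alpha$. The differential of $u_\alpha$ sends $y\mapsto \sum_\sigma \sigma(y)X_{\sigma\tilde\alpha}$, and the lattice $\mf u_{\alpha,x,r}$ is by construction (\cite[Section 8.5]{KP}) the image of $\{y: v(y)+\langle\alpha,v\rangle\ge r\}$. The two therefore match directly.

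For type R2, I would work inside $\SU_3$ as in Subsection \ref{R2}. Here $u_\alpha(x,y)=1+N$ with $N$ strictly upper triangular and $N^3=0$, so $\log u_\alpha(x,y)=N-\frac12N^2$. Its entries are $x$, $\ol x$, and $y-\frac12 x\ol x$, and the latter is traceless since $x\ol x=y+\ol y$. The valuation conditions $v(x)+a\ge r$ and $v(y-\frac12x\ol x)+2a\ge r$ are equivalent to the original ones because $p>2$ and $v(x\ol x)+2a\ge 2r>r$. Combined with the description of $\mf u_{\alpha,x,r}$ as $\{(x,z): z+\ol z=0\}$ with the same bounds, this should give equality.

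For the torus, I would reduce via Proposition \ref{invariants}-style Galois descent to a split torus over a tamely ramified $E$. For $\G_m$ the identity $\log(1+\mf m^n)=\mf m^n$ holds in the range $n>e/(p-1)$, and compatibility $\mf t_r=\Lie(\mc T_{r,E})^{\Gal}$ holds for weakly induced $T$ (\cite[Appendix B]{KP}).

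I expect the main obstacle to be the first step: checking rigorously that Lazard's $L_\omega(H)$, with its $\log$ defined purely group-theoretically via $p^n$-th roots, agrees with the analytic tangent vector map. The identification must be compatible with restriction to the Iwahori factors, so that \eqref{Zp-lattices} maps onto \eqref{OK-lattices} summand by summand. I would first try to realize everything inside $\GL_N(E)$ via the embedding of Lemma \ref{conjlemma}. There $\log$ is the matrix logarithm series, which converges on $K_r$, and the tangent map is the identity on matrices. It remains to verify that the conjugation by $t$ and the subsequent restriction to the factors are compatible with $\Phi_G$, which follows from functoriality.
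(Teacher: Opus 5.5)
Your proposal is correct and has the same skeleton as the paper's proof. Both use the chain $L_\omega(H)\cong\Lie(G(K))\xto{\Phi_G}\Lie(G)(K)$, reduce along the Iwahori decompositions to the factors, handle R1 by direct computation, and treat the torus via the split case plus Galois descent. The differences are in how $\log$ is computed on the factors.

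\begin{itemize}
\item \textbf{The identification with $\Lie(H)$.} You describe $L_\omega(H)\cong\Lie(H)$ through the velocity of $\lambda\mapsto g^\lambda$, so each factor becomes a one-line derivative computation. The paper instead argues with charts on $\mf m_K^n$ and $1+\mf m_K^n$. The step you flag as the main obstacle is in fact immediate. Schneider uses $\log$ itself as a global chart on $H$, and this $\Q_p$-analytic structure agrees with the one coming from $G(K)$ by uniqueness of analytic structures on $p$-adic Lie groups. Since $\log(g^\lambda)=\lambda\log(g)$ by \cite[Lemma 31.1]{schneiderbook}, the curve $\lambda\mapsto g^\lambda$ is a line in this chart with velocity $\log(g)$. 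So the fallback through $\GL_N(E)$ and Lemma \ref{conjlemma} is unnecessary, and it would only have been available for $v\in X_*(S\ad)_\Q$ anyway.

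\item \textbf{The R2 root groups.} Your matrix logarithm gives the closed formula $\log u_\alpha(x,y)=(x,\,y-\frac12x\ol x)$ and needs no auxiliary integral model. The price is that you take as input from \cite{KP} the description $\mf u_{\alpha,x,r}=\{(s,t)\st t+\ol t=0,\ v(s)\geq r-a,\ v(t)\geq r-2a\}$. The paper instead builds a model from the three $\G_a$ factors of Proposition \ref{compatibility}, matches it with KP's model via \cite[Corollary 2.10.11]{KP}, and applies Claim \ref{R1-LieAlg} three times. Your formula recovers the paper's Remark \ref{qsplit-expl-desc}, since the $(1,3)$-entry of the logarithm vanishes on $U^a_{\alpha,1}$ and $U^a_{\alpha,2}$.

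\item \textbf{A small correction on the torus.} A weakly induced torus only becomes \emph{induced} over a tame extension; it need not split over one (for example $\WR_{L/K}\G_m$ with $L/K$ wildly ramified). The descent works with any finite Galois splitting field, as in the paper, so nothing in your argument breaks.
\end{itemize}
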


The strategy will be as follows.  Fix any $r>\frac1{p-1}$.  The Lie $\Z_p$-algebra $L_\omega(G_{x,\frac1{p-1}+})_r=\log(G_{x,r})$ is a lattice within the Lie $\Q_p$-algebra $L_\omega(G_{x,\frac1{p-1}+})$, which is naturally isomorphic to $\Lie(G_{x,\frac1{p-1}+})$ by \cite[p. 231]{schneiderbook}.  Since $G_{x,\frac1{p-1}+}$ is an open subgroup of $G(K)$, then $\Lie(G_{x,\frac1{p-1}+})=\Lie(G(K))$, and we have an explicit isomorphism of Lie $K$-algebras
$$\Lie(G(K))\iso \Lie(G)(K)$$
by Proposition \ref{explicit-iso}.  Naturally lying within the right side is the $\O_K$-lattice 
$$\mf g_{x,r}=\Lie(\mc G_{x,r})(\O_K)$$
by Remark \ref{int-model-rem}.  Altogether, this information can be arranged into the following diagram:
\[\begin{tikzcd}[cramped, row sep=tiny]
	{\log(G_{x,r})} & {L_\omega(G_{x,\frac1{p-1}+})_r} & {L_\omega(G_{x,\frac1{p-1}+})} & {\Lie(G(K))} & \\
	&&&& {\Lie(G)(K)}. \\
	&&{\mf g_{x,r}} & {\Lie(\mc G_{x,r})(\O_K)}
	\arrow[between={0.2}{0.8}, equals, nfold, from=1-1, to=1-2]
	\arrow[hook, from=1-2, to=1-3]
	\arrow[between={0.2}{0.8}, equals, nfold, from=1-3, to=1-4]
	\arrow["\sim", from=1-4, to=2-5]
	\arrow[between={0.2}{0.8}, equals, nfold, from=3-3, to=3-4]
	\arrow[hook, from=3-4, to=2-5]
\end{tikzcd}\]
In order to prove Proposition \ref{qsplit-LieAlg-iso}, because every map preserves Lie brackets in this diagram, it is sufficient to show that the image of $\log(G_{x,r})$ under the top row is equal to the image of $\mf g_{x,r}$ under the bottom row.

One can readily verify that the maps in this diagram are all compatible with our Iwahori decompositions above (\ref{OK-lattices}, \ref{K-vs}, \ref{Qp-vs-0}, \ref{Zp-lattices}, \ref{Qp-vs}).  Thus, as we need only show that our two lattices have equal image, it is sufficient to prove the following two claims.

\begin{lem} \label{LieAlg-cases}
Let $r>\frac1{p-1}$ be fixed.
\begin{enumerate}
\item \label{LieAlg-cases-torus} The images of $\log(T(K)_r)$ and $\mf t_r$ under these two maps coincide.
\item \label{LieAlg-cases-root} For each $\alpha\in \Phi^{\mathrm{nd}}$, the images of $\log(U_{\alpha,x,r})$ and $\mf u_{\alpha, x, r}$ under these two maps coincide.
\end{enumerate}
\end{lem}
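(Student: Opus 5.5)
In each case the plan is to make the top-row map explicit on the relevant factor and compare lattices inside a single $K$-vector space. The key tool is functoriality. Each factor ($T(K)_r$, $U_{\alpha,x,r}$) is the $\O_K$-points of a smooth integral model $\mc T_r$ or $\mc U_{\alpha,x,r}$. The map $\Phi_G$ of Proposition \ref{explicit-iso} is natural in $G$, and Lazard's $\log$ commutes with continuous group homomorphisms. For a saturated group, Lazard's Lie algebra $L_\omega$ is identified with the analytic Lie algebra via $g\mapsto \frac{d}{dt}g^t|_{t=0}$ (the map $\log$ composed with that identification). So on each factor the composite of the top row sends $g$ to the tangent vector of the one-parameter subgroup $t\mapsto g^t$, read in $\Lie(G)(K)$.

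For part (\ref{LieAlg-cases-torus}), first reduce to the split case. Let $E/K$ be a Galois splitting field of $T$. Then $T(K)_r=T(E)_r^{\Gal(E/K)}$, and $\mf t_r$ should equal $(\mf t^E_r)^{\Gal(E/K)}$ because $T$ is weakly induced, so the standard and minimal congruent filtrations agree (\cite[Corollary B.10.13]{KP}). Both identifications are Galois-equivariant, so it suffices to treat $T_E\iso\G_m^n$, and by functoriality just $\G_m$. There $U_E^{(m)}$ with $m=\lceil re_E\rceil$ maps under $\log$ to $\mf m_E^m$, by Remark \ref{Gm-log-exp}. The tangent vector of $t\mapsto (1+y)^t$ is exactly the classical $\log(1+y)\in E=\Lie(\G_m)(E)$. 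The lattice $\mf t_r$ is $\mf m_E^m$, since the filtration model is the dilatation $1+\varpi^m\O$. I expect the one subtle point here to be checking that taking invariants commutes with forming $\Lie(\mc T_r)(\O_K)$. I would deduce this from the lattice description in \cite[B.10]{KP}.

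For part (\ref{LieAlg-cases-root}) with $\alpha$ of type R1, compose $u_\alpha$ with the isomorphism $K_{\tilde\alpha}\iso U_\alpha(K)$. The group $U_{\alpha,x,r}$ is $u_\alpha(\mf m^i_{K_{\tilde\alpha}})$, and $g^t$ is $u_\alpha(ty)$. Hence the tangent vector is $d u_\alpha(y)$, and $\log(U_{\alpha,x,r})$ maps onto $du_\alpha(\mf m^i)$. By \cite[Section 8.5]{KP}, $\mf u_{\alpha,x,r}$ is precisely $du_\alpha$ of the same ideal, since $\mc U_{\alpha,x,r}$ is the model with $\O$-points $u_\alpha(\mf m^i)$.

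The type R2 case is the main obstacle. Here I would use the compatible decomposition $U^a_{\alpha,1}\cdot U^a_{\alpha,2}\cdot U^a_{\alpha,3}$ of Proposition \ref{compatibility} for the level $r$. Each of these subgroups is a one-parameter $\G_a$-type subgroup: $t\cdot(x,\tfrac12x^2)$ gives $u_\alpha(tx,\tfrac12t^2x^2)$, and similarly for the other two. So the tangent vectors are $du_\alpha(x,0)$, $du_\alpha(y\gamma,0)$ and $du_\alpha(0,z\gamma)$, with $x,y,z$ ranging over the ideals $\mf m^{n_j}$. By Remark \ref{compatible-saturation}, $\log(U_{\alpha,x,r})$ is the direct sum of the logs of the three factors. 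Its image is therefore $\{du_\alpha(s,w): s\in\mf m_E^{i},\ w\in \gamma\,\mf m_F^{n_3}\}$, using Lemma \ref{traceless} to combine the first two pieces. It remains to match this with $\Lie(\mc U_{\alpha,x,r})(\O_K)$. The Lie algebra of $U_{K_{\tilde\alpha}/K_{\tilde\beta}}$ is $\{(s,w): w+\ol w=0\}$, and the traceless $w$ with $v(w)+2a\ge r$ are exactly $\gamma\mf m_F^{n_3}$ up to the indexing of Remark \ref{R2-desc}. So I would verify this against the explicit model in \cite[Section 8.5]{KP}, taking care that the Chevalley--Steinberg normalization of $du_\alpha$ agrees on both sides.
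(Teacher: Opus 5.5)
Your proposal is correct and has the same skeleton as the paper's proof: reduction to Iwahori factors, Galois descent to $\G_m$ for the torus, reduction to $\G_a$ through $u_\alpha$ for type R1, and the three one-parameter subgroups of Proposition~\ref{compatibility} for type R2. What differs is how you compute the analytic side.

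\emph{The paper's route.} It proves by hand, in the $\G_a$ case, that Lazard's $\log$ followed by $L_\omega\cong\Lie$ is just the chart inclusion, using an ordered basis to put a $K$-structure on $L_\omega$. It then checks the derivation diagrams for $\G_a$ and $\G_m$ explicitly. Finally it bootstraps the $\G_m$ case (through the classical logarithm) and the R2 case (as three copies of $\G_a$) from that computation.

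\emph{Your route.} You observe once that, because $\log$ is the chart and $\log(g^t)=t\log(g)$, the top row sends $g$ to the velocity of $t\mapsto g^t$, followed by $\Phi_G$. This description is intrinsic, so compatibility with subgroups, with the Weil restrictions and with the $\Gal(E/K)$-action comes for free. Every case then reduces to differentiating an explicit parametrization. The cost is that you rely on Schneider's identification $L_\omega\cong\Lie$ being the one induced by the chart $\log$, which the paper in effect checks directly for $\G_a$. For the descent of $\mf t_r$ that you flag, the paper simply cites \cite[B.6.8, 2.5.22]{KP}.

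\emph{Where your sketch is thinner: the algebraic side of R2.} You propose to verify against the model of \cite{KP} by hand. The paper instead builds the model $\G_{a,r_1}\times\G_{a,r_2}\times\G_{a,r_3}$ through the pointed isomorphism $\G_a^3\cong U_{L/K}$. The transported law $(x,y,z)(x',y',z')=(x+x',y+y',z+z'+2x'y)$ is integral because $r\geq 0$. The paper identifies this model with the one of \cite{KP} via \cite[Corollary 2.10.11]{KP}, and reads off its Lie algebra as the image of $(x,y,z)\mapsto(x+y\gamma,\gamma z)$. That image is exactly your target lattice. In that approach both lattices are computed inside $\Lie(U_{L/K})(K)$ and transported by the same $du_\alpha$. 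So no separate normalization check is needed.
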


We will begin with proving part \ref{LieAlg-cases-root} in the case when $\alpha$ is a root of type R1; as a corollary of the proof, we obtain part \ref{LieAlg-cases-torus} when $T$ is a split torus.  We will then generalize to the case of any weakly induced torus via a descent argument.  Finally, we will prove part \ref{LieAlg-cases-root} when $\alpha$ is a root of type R2.

\begin{claim} \label{R1-LieAlg}
Part \ref{LieAlg-cases-root} of Lemma \ref{LieAlg-cases} is true when $\alpha$ is of type R1.
\end{claim}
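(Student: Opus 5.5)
The argument reduces everything to a computation inside the one-parameter additive group $K_{\tilde\alpha}$. For $\alpha$ of type R1 we have the isomorphism $u_\alpha:K_{\tilde\alpha}\xto{\sim}U_\alpha(K)$, which comes from the isomorphism of $K$-groups $R_{K_{\tilde\alpha}/K}\G_a\xto{\sim}U_\alpha$ induced by the Chevalley-Steinberg system. Transporting along $u_\alpha$, the lattice $U_{\alpha,x,r}$ becomes $\mf a_r:=\{y\in K_{\tilde\alpha}\st v(y)+\langle\alpha,v\rangle\geq r\}$. By Subsection \ref{R1} and Lemma \ref{OKsaturation}, the $p$-valuation $\omega$ pulls back to $y\mapsto v(y)+\langle\alpha,v\rangle$, which is the standard additive $p$-valuation shifted by a constant. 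The plan is to identify both lattices in question with $\mf a_r$ inside the one-dimensional $K_{\tilde\alpha}$-space $\Lie(R_{K_{\tilde\alpha}/K}\G_a)(K)=K_{\tilde\alpha}$, using naturality of all maps in the diagram with respect to the closed immersion $U_\alpha\into G$.

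\textbf{Analytic side.} The group $(\mf a_{\frac1{p-1}+},+)$ is abelian and saturated. For an abelian saturated group, Lazard's $\log$ is obtained by dividing $p^n$-th powers by $p^n$ and taking a limit, as in \cite[III.2.1.10]{lazard65}. For an additive group this process is the identity, so $L_\omega(\mf a_{\frac1{p-1}+})=K_{\tilde\alpha}$ viewed as a $\Q_p$-space, and $\log(\mf a_r)=\mf a_r$. I also need to identify $L_\omega$ with $\Lie$ of the analytic group $\mf a_{\frac1{p-1}+}\sub K_{\tilde\alpha}$, i.e.\ with $T_0K_{\tilde\alpha}=K_{\tilde\alpha}$, via \cite[p.\ 231]{schneiderbook}. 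That identification sends $\log(g)$ to the tangent vector of the one-parameter subgroup $t\mapsto g^t=tg$, which is $g$ itself. Under $\Phi_{U_\alpha}$ of Proposition \ref{explicit-iso}, the tangent vector $y$ maps to the derivation $f\mapsto$ (directional derivative of $f$ at $0$ in direction $y$) on $A=\mathrm{Sym}_K(K_{\tilde\alpha}^\vee)$. This is the point $\eps y\in\Lie(R_{K_{\tilde\alpha}/K}\G_a)(K)=K_{\tilde\alpha}\otimes_KK[\eps]$. Hence the image of $\log(U_{\alpha,x,r})$ is $\mf a_r$.

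\textbf{Algebraic side.} By the construction in \cite[Section 8.5]{KP}, the smooth model $\mc U_{\alpha,x,r}$ is the $\O_K$-group whose $\O_K$-points are $\mf a_r$. Concretely it is $R_{\O_{K_{\tilde\alpha}}/\O_K}$ of $\G_a$ rescaled by an element generating the fractional ideal $\mf a_r$, pushed forward along $u_\alpha$. It is a vector group, so $\Lie(\mc U_{\alpha,x,r})(\O_K)=\ker(\mc U(\O_K[\eps])\to\mc U(\O_K))=\eps\,\mf a_r$. This lands in $\Lie(U_\alpha)(K)$ as $\mf a_r$, matching the analytic side.

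\textbf{Main obstacle.} The hardest step is keeping track of the identifications. One must check that Schneider's isomorphism $L_\omega(H)\iso\Lie(H)$ really sends $\log g$ to the derivative of $t\mapsto g^t$, with no normalizing constant. One must also check that the differential of $u_\alpha$ at the origin is the identity on $K_{\tilde\alpha}$, so that the various copies of $K_{\tilde\alpha}$ agree; this holds because $u_\alpha$ is defined by a linear map of vector groups. I would first verify the $\G_a$ case directly, then deduce the general case by functoriality of $\Phi_G$ applied to the closed immersion $U_\alpha\into G$. The same computation, applied to $\G_m$ via $\log:1+\mf m^n\to\mf m^n$ (Remark \ref{Gm-log-exp}), then gives part \ref{LieAlg-cases-torus} for split tori.
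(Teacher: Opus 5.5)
Your proposal is correct and follows essentially the same route as the paper. Both transport along $u_\alpha$ to the additive group $K_{\tilde\alpha}$, observe that Lazard's $\log$ becomes the inclusion into the tangent space, identify $\Lie$ of the rescaled Weil-restricted $\G_a$ model with the same fractional ideal, and check via the derivative computation that $\Phi$ matches the two copies of $K_{\tilde\alpha}$; the differences are cosmetic, since the paper first strips off the Weil restriction (reducing to $L=K$) and justifies the $\log$ step with an ordered basis and a $K$-analytic chart, whereas you work directly with $\mathrm{Sym}_K(K_{\tilde\alpha}^\vee)$ and use the one-parameter subgroups $t\mapsto tg$.
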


\begin{proof}
Fix $r>\frac1{p-1}$, and let $L=K_{\tilde\alpha}$ for notational simplicity.  By \cite[Construction 2.9.16]{KP}, there is an isomorphism
$$\WR_{L/K}\G_a\xto{\sim} U_\alpha,$$
which induces an isomorphism $u_\alpha: L\xto{\sim} U_\alpha(K)$.  Now 
$$U_{\alpha, x, r}=\{u_\alpha(x)\st x\in \mf m_L^{n_r}\},$$
where $n_r$ is the least integer $\geq e_L\cdot (r-\langle \alpha,v\rangle)$.  This has associated integral model given by $\WR_{\mc O_L/\mc O_K} \G_{a,r}$, where we write
$$\G_{a,r}=\Spec \O_L[\varpi_L^{-n_r}t]$$
as in \cite[C.2]{KP}.  We also set notation and write $\G_a=\Spec L[t]$.  Notice that 
$$\Lie(\WR_{L/K} \G_a)(K)=\Lie(\G_a)(L)\quad \mathrm{and}\quad \Lie(\WR_{\O_L/\O_K}\G_{a,r})(\O_K)=\Lie(\G_{a,r})(\O_L);$$ thus, to prove our claim, we may suppose without loss of generality that $L=K$ (as proving this for an arbitrary finite extension $K/\Q_p$ will give us our case, with fewer notational burdens).  We can then work directly with $\G_a$ and its integral model $\G_{a,r}$ via the isomorphisms above.

We now have a $K$-linear isomorphism
$$\Der_\aug(K[t],K)\xto{\sim} K,\qquad D\mapsto D(t)$$
as well as an $\O_K$-linear isomorphism
$$\Der_\aug(\O_K[\varpi_K^{-n_r}t],\O_K)\xto{\sim} \mf m_K^{n_r}=\{x\in K\st v(x)\geq r-\langle \alpha,v\rangle\},\qquad D\mapsto D(t).$$
In fact, one can easily verify that these fit into a commutative diagram
\[\begin{tikzcd}[cramped]
	{\Lie(\G_{a,r})(\O_K)} & {\Der_\aug(\O_K[\varpi_K^{-n_r}t],\O_K)} & {\mf m_K^{n_r}} \\
	{\Lie(\G_a)(K)} & {\Der_\aug(K[t],K)} & K.
	\arrow["\sim", from=1-1, to=1-2]
	\arrow[hook, from=1-1, to=2-1]
	\arrow["\sim", from=1-2, to=1-3]
	\arrow[hook, from=1-3, to=2-3]
	\arrow["\sim", from=2-1, to=2-2]
	\arrow["\sim", from=2-2, to=2-3]
\end{tikzcd}\]

Next, we compute the analytic side.  Via $g\mapsto g(t)$, we have identifications fitting into a commutative diagram
\[\begin{tikzcd}[cramped]
	{\G_{a,r}(\O_K)} & {m_K^{n_r}} \\
	{\G_a(K)} & {K.}
	\arrow["\sim", from=1-1, to=1-2]
	\arrow[hook, from=1-1, to=2-1]
	\arrow[hook, from=1-2, to=2-2]
	\arrow["\sim", from=2-1, to=2-2]
\end{tikzcd}\]
The inclusion $\iota:\mf m_K^{n_r}\into K$ is itself a chart for our $p$-adic analytic group $\mf m_K^{n_r}$, giving us an identification $\Lie(\mf m_K^{n_r})\xto{\sim} K$.  We claim then that the composition
\begin{equation}\label{iota-comp}
\mf m_K^{n_r}\xto{\log} \log(\mf m_K^{n_r})\into \Lie(\mf m_K^{n_r})\xto{\sim} K\end{equation}
is merely $\iota$, where $\log$ is defined as in \cite[Section 31]{schneiderbook} via our $p$-valuation.  Indeed, by choosing an ordered basis $(b_i)_{i\leq \ell}$ of $\O_K$ (see Remark \ref{OKbasis}), one can construct an explicit isomorphism of $\Q_p$-vector spaces $L_\omega(\mf m_K^{n_r})\xto{\sim} K$ by sending our $\Q_p$-basis vectors $\log(\varpi_K^{n_r}b_i)$ each to $\varpi_K^{n_r} b_i$.  In particular, this allows us to put a $K$-linear structure on $L_\omega(\mf m_K^{n_r})$, after which
$$\log:\mf m_K^{n_r}\to \log(\mf m_K^{n_r})\into L_\omega(\mf m_K^{n_r})$$
is in fact a $K$-analytic chart which induces a map $\Lie(\mf m_K^{n_r})\xto{\sim} L_\omega(\mf m_K^{n_r})$; a priori, it was only known to be $\Q_p$-analytic.  It now follows that the diagram
\[\begin{tikzcd}[row sep=tiny]
	{L_\omega(\mf m_K^{n_r})} & \\
	& {\Lie(\mf m_K^{n_r})} \\
	K
	\arrow["\sim"', from=2-2, to=1-1]
	\arrow["\sim", from=1-1, to=3-1]
	\arrow["\sim"', from=2-2, to=3-1]
\end{tikzcd}\]
commutes, from which our desired fact that \ref{iota-comp} is the inclusion $\iota$ follows.

We have now proven that both
$$\log(\mf m_K^{n_r})\into \Lie(\mf m_K^{n_r})=\Lie(K)=K$$
and
$$\Lie(\G_{a,r})(\O_K)\into \Lie(\G_a)(K)\xto{\sim} \Der_\aug (K[t],K)\xto{\sim} K$$
have image equal to $\mf m_K^{n_r}$.  The only part which remains is to verify that
\begin{equation}\label{Ga-diag}\begin{tikzcd}[row sep=tiny]
	{\Lie(K)} & \\
	& K \\
	{\Lie(\G_a)(K)}
	\arrow["\sim", from=1-1, to=2-2]
	\arrow["{\Phi_{\G_a}}"', from=1-1, to=3-1]
	\arrow["\sim"', from=3-1, to=2-2]
\end{tikzcd}\end{equation}
commutes.  This is easily verified: given any $a\in K$, we have a corresponding $\Delta_a\in \Der_e(K,K)$ sending $f\in C\an(K,K)$ to $\Delta_af\coloneqq (d_0 f)(a)$.  Then if $f\in K[t]$ (being viewed as a polynomial on $K$), we have that $(d_0f)(a)=f_1a$ if $f=\sum f_it^i$, so $\Delta_a|_{K[t]}\in \Der_\aug(K[t],K)$ has image under $D\mapsto D(t)$ given precisely by $\Delta_a(t)=a$.  This proves that \ref{Ga-diag} commutes, giving us our claim.
\end{proof}

\begin{claim} \label{splitTor-LieAlg}
Part \ref{LieAlg-cases-torus} of Lemma \ref{LieAlg-cases} is true when $T$ is a split $K$-torus.
\end{claim}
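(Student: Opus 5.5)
The plan is to reduce to the case $T=\G_m$ and then run the same three-step comparison as in Claim \ref{R1-LieAlg}, this time for the multiplicative group.

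\emph{Reduction to $\G_m$.} Since $T$ is split, choose an isomorphism $T\iso\G_m^m$ over $K$. The standard filtration is then the product filtration:
$$T(K)_r=(U_K^{(n_r)})^m,$$
where $n_r$ is the least integer $\geq e_K r$. The integral model $\mc T_r$ is the corresponding product of the congruence models
$$\G_{m,r}=\Spec \O_K\big[\varpi_K^{-n_r}(t-1),\,t^{-1}\big]$$
(see \cite[Appendix C]{KP}). The objects involved are $\log$, $\Lie(\mc T_r)(\O_K)$, and $\Phi_T$. The map $\Phi_T$ is functorial and $\Lie$ commutes with products. The $p$-valuation on $T(K)_{\frac1{p-1}+}$ is the minimum over factors (Lemma \ref{split-torus-sat}), so by \cite[Proposition 31.2]{schneiderbook} the Lazard logarithm of a product is the product of the logarithms. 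Hence every object in the comparison splits as a product over the $m$ factors, and it suffices to treat $\G_m$.

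\emph{Algebraic side.} The map $D\mapsto D(t)$ identifies $\Der_\aug(K[t,t^{-1}],K)\iso K$. It also identifies $\Der_\aug(\O_K[\varpi_K^{-n_r}(t-1),t^{-1}],\O_K)$ with $\mf m_K^{n_r}$, because such a derivation is determined by its value on the generator $\varpi_K^{-n_r}(t-1)$, and that value must lie in $\O_K$. So $\mf t_r$ maps onto $\mf m_K^{n_r}\sub K$.

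\emph{Analytic side.} The chart $\iota:U_K^{(1)}\into K$ identifies $\Lie(K^\times)=K$. The same verification as for diagram \ref{Ga-diag} shows $\Delta_a(t)=a$, so $\Phi_{\G_m}$ is compatible with these identifications. It remains to show that the Lazard $\log$ on $U_K^{(n)}$, composed with the identification $L_\omega\iso\Lie(U_K^{(n)})\iso K$, is the usual $p$-adic logarithm $\log(1+y)=\sum(-1)^{i+1}y^i/i$. The usual logarithm is a $K$-analytic group isomorphism $U_K^{(n)}\xto{\sim}\mf m_K^n$ for $n>e_K/(p-1)$ (Remark \ref{Gm-log-exp}), with differential the identity at $1$. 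It is also compatible with $p$-valuations: it carries $\omega(x)=v(x-1)$ to the valuation of Lemma \ref{OKsaturation}. By functoriality of Lazard's $\log$ for valuation-preserving isomorphisms of saturated groups, the Lazard logarithm of $U_K^{(n)}$ is identified with that of $\mf m_K^n$. Claim \ref{R1-LieAlg} already identifies the latter with the inclusion $\mf m_K^n\into K$. Therefore the image of $\log(T(K)_r)$ is $\log_p(U_K^{(n_r)})=\mf m_K^{n_r}$, which matches the algebraic side.

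\emph{Main obstacle.} The step requiring the most care is the compatibility of the Lazard $\log$ with the analytic chart. One must check that the Lie-algebra identification $\Lie(U_K^{(n)})\iso L_\omega(U_K^{(n)})$ of \cite[p.~231]{schneiderbook} is natural under the isomorphism $\log_p$, so that it transports the $\G_a$ computation. I would argue this through naturality of $L_\omega$ and of the identification with the tangent space under isomorphisms of $p$-valued $\Q_p$-analytic groups, and then use $d_1\log_p=\mathrm{id}$.
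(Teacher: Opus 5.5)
Your proposal is correct and follows essentially the same route as the paper. Like the paper, you reduce to $\G_m$, identify $\mf t_r$ with $\mf m_K^{n_r}$ via $D\mapsto D(t)$, transport the $\G_a$ computation of Claim \ref{R1-LieAlg} along the usual logarithm $1+\mf m_K^{n_r}\xto{\sim}\mf m_K^{n_r}$ (whose differential is the identity), and check $\Delta_a(t)=a$ for $\Phi_{\G_m}$; your extra justification of the product reduction and of the naturality of $L_\omega\iso\Lie$ under isomorphisms of $p$-valued groups only makes explicit what the paper leaves implicit.
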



\begin{proof}
Fix $r>\frac1{p-1}$.  Since $T$ is split, it is sufficient to prove this when $T=\G_m=\Spec K[t,t\inv]$.

First, we compute the algebraic side.  In this case, by \cite[Example A.5.21]{KP}, our $\O_K$-integral model is given by
$$\G_{m,r}=\Spec\O_K[\varpi_K^{-n_r}(t-1),t\inv]$$
where $n_r=\lceil e_Kr\rceil$ is the least integer such that $\frac{n_r}{e_K}\geq r$.  Now we have a $K$-linear isomorphism
$$\Der_\aug(K[t,t\inv],K)\xto{\sim} K,\qquad D\mapsto D(t),$$
as well as an $\O_K$-linear isomorphism
$$\Der_\aug(\O_K[\varpi_K^{-n_r}(t-1),t\inv],\O_K)\xto{\sim} \mf m_K^{n_r}=\{x\in K\st v(x)\geq r\},\qquad D\mapsto D(t).$$
Once again, one can easily check that the diagram
\[\begin{tikzcd}[cramped]
	{\Lie(\G_{m,r})(\O_K)} & {\Der_\aug(\O_K[\varpi_K^{-n_r}(t-1),t\inv],\O_K)} & {\mf m_K^{n_r}} \\
	{\Lie(\G_m)(K)} & {\Der_\aug(K[t,t\inv],K)} & K
	\arrow["\sim", from=1-1, to=1-2]
	\arrow[hook, from=1-1, to=2-1]
	\arrow["\sim", from=1-2, to=1-3]
	\arrow[hook, from=1-3, to=2-3]
	\arrow["\sim", from=2-1, to=2-2]
	\arrow["\sim", from=2-2, to=2-3]
\end{tikzcd}\]
commutes.

Next, we compute the analytic side.  Via $g\mapsto g(t)$, we have identifications fitting into a commutative diagram
\[\begin{tikzcd}[cramped]
	{\G_{m,r}(\O_K)} & {1+\mf m_K^{n_r}} \\
	{\G_m(K)} & {K^\times.}
	\arrow["\sim", from=1-1, to=1-2]
	\arrow[hook, from=1-1, to=2-1]
	\arrow[hook, from=1-2, to=2-2]
	\arrow["\sim", from=2-1, to=2-2]
\end{tikzcd}\]
We have an identification $\Lie(1+\mf m_K^{n_r})\xto{\sim} K$ coming from the inclusion $1+\mf m_K^{n_r}\into K$, and we claim that the composition
$$1+\mf m_K^{n_r}\xto{\log} \log(1+\mf m_K^{n_r})\into \Lie(1+\mf m_K^{n_r})\xto{\sim} K$$
has image precisely $\mf m_K^{n_r}$.  In fact, this follows from the analogous claim in the proof of Claim \ref{R1-LieAlg} (see \ref{iota-comp}): the usual logarithm (defined in, e.g., \cite[Proposition 5.5]{neukirch}, which exists as $n_r\geq e_Kr>\frac{e_K}{p-1}$) is an isomorphism of $p$-valued groups $1+\mf m_K^{n_r}\xto{\log} \mf m_K^{n_r}$ (see Remark \ref{Gm-log-exp}).  We also have an identification $\Lie(\mf m_K^{n_r})\xto{\sim} K$ coming from the inclusion $\mf m_K^{n_r}\into K$, and from the power series of $\log$, one can verify that the map it induces on Lie algebras after these identifications is precisely the identity $K\to K$.  Thus, our claim literally follows directly from the analogous claim we previously proved.

Just as in the proof of Claim \ref{R1-LieAlg}, all that remains to be proven is that
\begin{equation}\label{Gm-diag}\begin{tikzcd}[row sep=tiny]
	{\Lie(K^\times)} & \\
	& K \\
	{\Lie(\G_m)(K)}
	\arrow["\sim", from=1-1, to=2-2]
	\arrow["{\Phi_{\G_m}}"', from=1-1, to=3-1]
	\arrow["\sim"', from=3-1, to=2-2]
\end{tikzcd}\end{equation}
commutes.  Indeed, given any $a\in K$, we have a corresponding $\Delta_a\in \Der_e(K^\times, K)$ sending $f\in C\an(K^\times, K)$ to $\Delta_af\coloneqq (d_1f)(a)$.  Then viewing $t\in K[t,t\inv]$ as a polynomial on $K^\times$, we find that $\Delta_a(t)=a$ (as $d_1t$ is just the identity $K\to K$).  This proves our claim.
\end{proof}

\begin{rem} \label{cochars}
We note the following as a corollary of the proof above.  Say $r>\frac1{p-1}$.  If $T$ is a split torus, we have by \cite[Lemma B.2.2]{KP} an identification
$$T(K)_r=X_*(T)\otimes_\Z (1+\mf m_K^{n_r}),$$
where $n_r=\lceil e_K r\rceil$.  We similar have identifications
$$\Lie(T)(K)=X_*(T)\otimes_\Z K\qquad\mathrm{and}\qquad\Lie(\mc T_r)(\O_K)=X_*(T)\otimes_\Z \mf m_K^{n_r}$$
by \cite[B.6.8 and 2.5.22]{KP}.  After all of these identifications, the proof of Claim \ref{splitTor-LieAlg} above shows that the map
$$T(K)_r\xto{\log} \log(T(K)_r)\to \Lie(T(K))\to \Lie(T)(K)$$
is merely the map $X_*(T)\otimes_\Z (1+\mf m_K^{n_r})\to X_*(T)\otimes_\Z K$ induced by the usual logarithm $\log:1+\mf m_K^{n_r}\xto{\sim} \mf m_K^{n_r}\into K$.  Similarly, the map $\Lie(\mc T_r)(\O_K)\to \Lie(T)(K)$ viewed as $X_*(T)\otimes_\Z \mf m_K^{n_r}\to X_*(T)\otimes_\Z K$ is merely induced by the inclusion $\mf m_K^{n_r}\into K$.
\end{rem}

\begin{claim} \label{genTor-LieAlg}
Part \ref{LieAlg-cases-torus} of Lemma \ref{LieAlg-cases} is true when $T$ is a weakly induced $K$-torus.
\end{claim}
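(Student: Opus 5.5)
Descent from the split case, exactly as in Corollary \ref{torus-saturation}. Fix $r>\frac1{p-1}$. Choose a finite Galois extension $E/K$ splitting $T$, and set $\Gamma=\Gal(E/K)$. By Claim \ref{splitTor-LieAlg} applied to $T_E$ over $E$, the image of $\log(T(E)_r)$ in $\Lie(T)(E)$ equals the image of $\Lie(\mc T^E_r)(\O_E)$. By Remark \ref{cochars}, under the identifications
$$T(E)_r=X_*(T)\otimes_\Z(1+\mf m_E^{n})\qquad\text{and}\qquad \Lie(T)(E)=X_*(T)\otimes_\Z E$$
the first map is $\mathrm{id}\otimes\log$, and the lattice is $X_*(T)\otimes\mf m_E^{n}$. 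Both maps are visibly $\Gamma$-equivariant, where $\Gamma$ acts diagonally on the tensor products. Hence the common lattice $\Lambda_E\sub\Lie(T)(E)$ is $\Gamma$-stable.

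Next I would identify both $K$-objects with $\Gamma$-invariants of the $E$-objects. On the group side, $T(K)_r=T(E)_r^\Gamma$, as in Corollary \ref{torus-saturation}. Since the $E$-logarithm is $\Gamma$-equivariant and bijective, $\log(T(K)_r)$ maps onto $\Lambda_E^\Gamma$. The compatibility of the analytic side with base change needs checking. I would verify that the square with $\Lie(T(K))\to\Lie(T(E))$ and $\Phi_T,\Phi_{T_E}$ commutes. This follows from functoriality of $\Phi$ and the fact that $A\into A\otimes_K E$ induces $\Der_\aug(A,K)\into\Der_\aug(A_E,E)$, which is the usual inclusion $\Lie(T)(K)=\Lie(T)(E)^\Gamma$. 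I also need that the Lazard logarithm of $T(K)_{\frac1{p-1}+}$ agrees with the restriction of that of $T(E)_{\frac1{p-1}+}$. This holds because $\log$ is defined intrinsically via $p$-power roots, as in Proposition \ref{log-exp}, and the $p$-valuation on $T(K)_{\frac1{p-1}+}$ is the restriction of the one on $T(E)_{\frac1{p-1}+}$. So the top-row image of $\log(T(K)_r)$ is $\Lambda_E\cap\Lie(T)(K)=\Lambda_E^\Gamma$.

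On the algebraic side I need $\mf t_r=\Lie(\mc T_r)(\O_K)$ to equal $\Lambda_E\cap\Lie(T)(K)$. This is the main obstacle, since integral models do not in general commute with invariants (Lie algebras of Néron-type models can drop). Here weak inducedness is used. For such $T$, the standard filtration equals the minimal congruent one, and by the construction of $\mc T_r$ in \cite[Appendix B]{KP} its Lie algebra is $\{X\in\Lie(T)(K)\st d\chi(X)\in\mf m_E^{n}\text{ for all }\chi\in X^*(T)\}$, the Lie-algebra analogue of the definition of $T(K)_r$. I would try to cite the Lie-algebra filtration statement in \cite[B.6.8, B.10]{KP} directly. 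Failing that, I would reduce via a tame extension $L/K$ over which $T$ becomes induced. For an induced torus $\prod R_{L_i/K}\G_m$, the description of $\mf t_r$ as $\prod\mf m_{L_i}^{n_i}$ can be checked exactly as in Claim \ref{R1-LieAlg}, using the Weil restriction identity $\Lie(\WR_{\O_{L_i}/\O_K}\G_{m,r})(\O_K)=\Lie(\G_{m,r})(\O_{L_i})$. Tame descent (the $\Gal(L/K)$-invariants, where Yu's filtration is compatible) then handles the remaining step. With this description, $\mf t_r=\Lambda_E^\Gamma$, and the two images coincide.
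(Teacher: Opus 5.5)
Your proposal is correct and follows essentially the same route as the paper: you deduce the claim from Claim \ref{splitTor-LieAlg} over a Galois splitting field, use Remark \ref{cochars} to see that both maps into $X_*(T)\otimes_\Z E$ are Galois-equivariant, take $\Gal(E/K)$-invariants, and identify the invariants with $T(K)_r$, $\Lie(T)(K)$ and $\mf t_r$. The paper settles that last identification by citing \cite[B.6.8 and 2.5.22]{KP}, which is the citation you were hoping for, so your tame-descent fallback is not needed; you are also more careful than the paper, which leaves implicit the compatibility of $\Phi$ and of the Lazard logarithm with passing from $T(E)_{\frac1{p-1}+}$ to $T(K)_{\frac1{p-1}+}$.
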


\begin{proof}
Because $T$ is weakly induced, the minimal congruent filtration is the standard filtration by \cite[Corollary B.10.13]{KP}.  Fix now $r>\frac1{p-1}$, and let $L/K$ be a finite Galois extension for which $T_L$ is a split torus.  By Remark \ref{cochars} above, we know that we can interpret Claim \ref{splitTor-LieAlg} as saying that the images of the two maps
\[\begin{tikzcd}[row sep=tiny, cramped]
	{X_*(T)\otimes_\Z (1+\mf m_L^{n_re(L/K)})} & \\
	& {X_*(T)\otimes_\Z L} \\
	{X_*(T)\otimes_\Z \mf m_L^{n_r e(L/K)}}
	\arrow[from=1-1, to=2-2]
	\arrow[from=3-1, to=2-2]
\end{tikzcd}\]
are equal; here, the top one is induced by the usual logarithm, and the bottom is induced by the inclusion.  Note that these maps are both evidently $\Gal(L/K)$-equivariant, so we can take invariants to conclude that the images of the two maps
\[\begin{tikzcd}[row sep=tiny, cramped]
	{\left(X_*(T)\otimes_\Z (1+\mf m_L^{n_re(L/K)})\right)^{\Gal(L/K)}} & \\
	& {\left(X_*(T)\otimes_\Z L\right)^{\Gal(L/K)}} \\
	{\left(X_*(T)\otimes_\Z \mf m_L^{n_r e(L/K)}\right)^{\Gal(L/K)}}
	\arrow[from=1-1, to=2-2]
	\arrow[from=3-1, to=2-2]
\end{tikzcd}\]
are equal.  By \cite[B.6.8 and 2.5.22]{KP}, this diagram can be rewritten as
\[\begin{tikzcd}[row sep=tiny, cramped]
	{T(K)_r} & \\
	& {\Lie(T)(K)} \\
	{\Lie(\mc T_r)(\O_K)}
	\arrow[from=1-1, to=2-2]
	\arrow[from=3-1, to=2-2]
\end{tikzcd}\]
which contains precisely the two maps which we needed to show have equal image to complete the proof of this claim.
\end{proof}

\begin{claim} \label{R2-LieAlg}
Part \ref{LieAlg-cases-root} of Lemma \ref{LieAlg-cases} is true when $\alpha$ is of type R2.
\end{claim}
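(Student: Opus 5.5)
We reduce, exactly as in Claim \ref{R1-LieAlg}, to a statement about the $\SU_3$-type root group of Subsection \ref{R2}. Write $E=K_{\tilde\alpha}$ and $F=K_{\tilde\beta}$, so that $U_\alpha\iso \WR_{F/K}U_{E/F}$. Since both the analytic and the algebraic Lie algebra functors commute with Weil restriction, the identifications
$$\Lie(\WR_{F/K}U_{E/F})(K)=\Lie(U_{E/F})(F)\qquad\text{and}\qquad \Lie(\WR_{\O_F/\O_K}\mc U)(\O_K)=\Lie(\mc U)(\O_F)$$
hold. We may therefore assume $F=K$. Then $U_\alpha(K)=\{u_\alpha(x,y)\st x\ol x=y+\ol y\}\sub \SL_3(E)$, and $\Lie(U_\alpha)(K)=\{(X,Y)\in E^2\st Y+\ol Y=0\}$, realized as the strictly upper triangular matrices with entries $X,\ol X,Y$.

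On the algebraic side, the smooth model $\mc U_{\alpha,x,r}$ is described in \cite[Lemma 6.1.6, 8.5]{KP}. I expect its Lie algebra, viewed inside this $K$-vector space via the $\eps$-derivation description of Remark \ref{int-model-rem}, to be
$$\mf u_{\alpha,x,r}=\{(X,Y)\st v(X)\geq r-a,\ v(Y)\geq r-2a,\ Y+\ol Y=0\}.$$
Here $a=\langle\alpha,v\rangle$. To justify this, I would compute the $\eps$-points of the model directly: $u_\alpha(\eps X,\eps Y)$ lies in $\mc U(\O_K[\eps])$ exactly when the coordinate bounds hold. The constraint $x\ol x=y+\ol y$ becomes $Y+\ol Y=0$ because $\eps^2=0$.

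On the analytic side, the key tool is the naturality of $\Phi$ in Proposition \ref{explicit-iso} together with the embedding $U_\alpha\into \GL_3$ over $E$, or over $K$ after restriction of scalars. The composite $\Lie(U_\alpha(K))\to\Lie(U_\alpha)(K)$ is then computed inside $M_3$, where it is the identity. The main task is to identify $\log(U_{\alpha,x,r})\sub L_\omega$ with a concrete subset of $\Lie(U_\alpha(K))$. I would use the matrix logarithm. For a unipotent $u=1+N$ with $N^3=0$, we have $\log u=N-\tfrac12 N^2$, which gives
$$\log u_\alpha(x,y)=\Big(x,\ y-\tfrac12 x\ol x\Big).$$
This is a polynomial map, so it is analytic with differential at $1$ equal to the identity, and it is a homeomorphism. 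The main obstacle is to show that the Lazard $\log$ of Section 31 of \cite{schneiderbook}, transported through $L_\omega\iso\Lie$ (via \cite[p. 231]{schneiderbook}), agrees with this matrix logarithm. I would argue as in Claim \ref{R1-LieAlg}. Lazard's $\log$ satisfies $\log(g^{\lambda})=\lambda\log g$, and on a one-parameter subgroup $\lambda\mapsto g^\lambda$ its derivative is the tangent vector. The matrix logarithm has the same two properties, since $u^\lambda=\exp(\lambda\log u)$ for unipotent matrices. Both maps send $g$ to the tangent vector of $\lambda\mapsto g^\lambda$ at $0$, so they coincide.

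Granting this, the image of $U_{\alpha,x,r}$ is the set of pairs $(x,\,y-\tfrac12x\ol x)$ with $v(x)\ge r-a$, $v(y)\ge r-2a$ and $x\ol x=y+\ol y$. Set $Y=y-\tfrac12 x\ol x$. Then $Y+\ol Y=0$, and $v(Y)\geq\min\{v(y),2v(x)\}\geq r-2a$, using $2(r-a)\geq r-2a$ and $p>2$. Conversely, given $(X,Y)$ in $\mf u_{\alpha,x,r}$, put $y=Y+\tfrac12X\ol X$. This inverts the map, and $v(y)\ge r-2a$ by the same estimate. Hence the two images agree, which proves the claim. The same bookkeeping works if one uses the decomposition of Proposition \ref{compatibility} instead, but the direct computation above seems cleaner.
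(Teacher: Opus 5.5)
Your argument is correct, but it follows a different route from the paper's. The paper never uses the $3\times3$ matrix realization or a matrix logarithm. Instead it:
\begin{itemize}
\item writes down the isomorphism of pointed schemes $\G_a^3\to U_{L/K}$, $(x,y,z)\mapsto (x,\tfrac12x^2)\cdot(y\gamma,-\tfrac12y^2\gamma^2)\cdot(0,z\gamma)$, built from the three subgroups of Proposition \ref{compatibility};
\item takes $\G_{a,r_1}\times\G_{a,r_2}\times\G_{a,r_3}$, with the transported group law, as the integral model, identified with that of \cite{KP} via \cite[Corollary 2.10.11]{KP};
\item splits the comparison along the three factors, as in the reduction to Iwahori factors, and concludes with three applications of Claim \ref{R1-LieAlg}.
\end{itemize}

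What this buys the paper is that Lazard's $\log$ only has to be identified on commutative one-parameter pieces, where Claim \ref{R1-LieAlg} already did the work. The price is a non-standard model, an appeal to uniqueness of smooth models, and a tacit use of the ordered-basis/compatibility decomposition of the lattices $L_\omega(\cdot)_r$ to justify working factor by factor.

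You instead treat the non-abelian group directly. This requires knowing that Schneider's identification $L_\omega\iso\Lie$ sends $\log g$ to the tangent vector of $\lambda\mapsto g^\lambda$, i.e.\ that it is the differential of the chart $\log$; the paper uses the same fact implicitly in Claim \ref{R1-LieAlg}. In return you get the explicit formula $\log u_\alpha(x,y)=(x,\,y-\tfrac12x\ol x)$ on all of $U_{\alpha,x,r}$. From it, Remark \ref{qsplit-expl-desc} and the Galois-equivariance needed in Theorem \ref{general-LieAlg-iso} follow immediately.

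The two approaches are consistent. For $x,y\in F$ your formula gives $\log u_\alpha(x,\tfrac12x^2)=(x,0)$ and $\log u_\alpha(y\gamma,-\tfrac12y^2\gamma^2)=(y\gamma,0)$. So the paper's three subgroups are exactly the exponentials of coordinate lines.

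The one step you left at the level of ``I expect'' is the description of $\mf u_{\alpha,x,r}$. To make it rigorous:
\begin{itemize}
\item Take the model to be the schematic closure of $U_{E/F}$ in $\WR_{\O_E/\O_F}(\G_a^2)$, rescaled so that its $\O_F$-points are $\{v(s)\geq r-a,\ v(t)\geq r-2a\}$.
\item This closure is smooth: since $p>2$ and $2(r-a)\geq r-2a$, you can solve integrally for the trace part $\tfrac12(t+\ol t)=\tfrac12 s\ol s$.
\item Because $\O_F[\eps]$ is $\O_F$-torsion free, the $\O_F[\eps]$-points of this closure are exactly the $F[\eps]$-points of $U_{E/F}$ satisfying the coordinate bounds. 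That gives your formula.
\end{itemize}
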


\begin{proof}
Fix $r>\frac1{p-1}$, and let $\alpha\in \Phi^\mathrm{nd}$ be of type R2.  Keeping the same setup as in the proof of Claim \ref{R1-LieAlg}, we get an isomorphism of $K$-groups
\begin{equation} \label{R2-isom-WR}\WR_{K_{\tilde\beta}/K} U_{K_{\tilde\alpha}/K_{\tilde\beta}}\xto{\sim} U_\alpha\end{equation}
induced by our Chevalley-Steinberg system $X$.  Exactly as in that proof, we may ignore the Weil restriction without loss of generality, as can be justified in general by \cite[Lemma A.3.12]{KP}.  Thus, we will simplify notation and replace $K_{\tilde\beta}$ by $K$, and $K_{\tilde\alpha}$ by $L$.  Recall that
$$U_{L/K}(R)=\{(s,t)\in (R\otimes_L K)^2\st t+\ol t=s\ol s\}$$
for any $K$-algebra $R$.  Let $U_{L/K,r}$ denote the subgroup of $U_{L/K}(K)$ corresponding to $U_{\alpha, x,r}$ via the isomorphism \ref{R2-isom-WR}, so
$$U_{L/K,r}=\left\{(s,t)\in U_{L/K}(K)\st v(s)\geq r-a,\quad v(t)\geq r-2a\right\}$$
where $a=\langle \alpha, v\rangle$.

In order to compute the algebraic side, we need to have an explicit description of the appropriate integral model of $U_{L/K}$ associated to $U_{L/K,r}$.  One description is given in \cite[Section C.4]{KP}, however we will construct a new model using an argument inspired by their construction which more closely matches Proposition \ref{compatibility}.  This will agree with their construction by \cite[Corollary 2.10.11]{KP}.  As in Proposition \ref{compatibility}, let $\gamma\in \O_L$ be a traceless element such that $\{1,\gamma\}$ is an $\O_K$-basis of $\O_L$.  Then we have an isomorphism of pointed $K$-schemes 
\begin{align*}
\G_a\times\G_a\times\G_a\to & \, U_{L/K},\\
(x,y,z)\mapsto& \left(x,\frac12x^2\right)\cdot \left(y\gamma,-\frac12y^2\gamma^2\right)\cdot (0,z\gamma)\\
&=\left(x+y\gamma, \frac12(x^2-y^2\gamma^2)+\gamma(z-xy)\right).
\end{align*}
Indeed, one can verify that the inverse of this is given by
$$(u,v)\mapsto \left(\frac12(u+\ol u),\quad \frac1{2\gamma}(u-\ol u),\quad \frac1{2\gamma}(v-\ol v)+\frac1{4\gamma}(u^2-\ol u^2)\right).$$
We can transport the group structure from $U_{L/K}$ to one on $(\G_a)^3$; after doing this, the restriction to each factor $\G_a$ recovers the usual group structure on $\G_a$.  Moreover, the proof of Proposition \ref{compatibility} shows that $U_{L/K,r}$ is precisely the image of 
$$\G_{a,r_1}(\O_K)\times \G_{a,r_2}(\O_K)\times \G_{a,r_3}(\O_K)$$
for $r_1=r-a$, $r_2=r-a-v(\gamma)$, and $r_3=r-2a-v(\gamma)$ (using the convention that $\G_{a,s}$ is the standard integral model for $\G_a$ satisfying $\G_{a,s}(\O_K)=\{x\in K\st v(x)\geq s\}$).  Just as in our original reduction to only needing to consider Iwahori factors in order to prove Proposition \ref{qsplit-LieAlg-iso}, in order to prove this claim, we can reduce to the case of proving it individually for each of these copies of $\G_a$ with their respective filtrations and integral models.  However, this was already accomplished in the proof of Claim \ref{R1-LieAlg}, so three applications of that result gives us our desired fact in this claim.
\end{proof}

This completes the proof of Proposition \ref{qsplit-LieAlg-iso}, giving us our desired computation of the Lie $\Z_p$-algebra $L_\omega(G_{x,\frac1{p-1}+})_{\frac1{p-1}+}$ when $G$ is quasisplit.

\begin{rem} \label{qsplit-expl-desc}
Fix $r>\frac1{p-1}$ and fix a Chevalley-Steinberg system, from which there arises an absolutely special point $x_0\in \mc A(S)$, and we can write $x=x_0+v$ for $v\in V(S\ad)$ (as in Remark \ref{root-filts}).  To ease notation, write $\Theta=\Gal(\ol K/K)$.  Given $\alpha\in \Phi^\mathrm{nd}$, we can describe the isomorphism of Lie $\Z_p$-algebras
\begin{equation}\label{root-LieAlg}
\log(U_{\alpha, x, r})\xto{\sim} \mf u_{\alpha, x, r}\end{equation}
by chasing through the proofs of Claims \ref{R1-LieAlg} and \ref{R2-LieAlg} as follows.
\begin{itemize}
\item If $\alpha$ is of type R1, then \ref{root-LieAlg} sends 
$$\log(u_\alpha(x))\mapsto \sum_{\sigma\in \Theta/\Theta_{\tilde\alpha}} \sigma(x\cdot X_{\tilde\alpha})$$
for $x\in K_{\tilde\alpha}$ with $v(x)\geq r-\langle \alpha,v\rangle$.
\item If $\alpha$ is of type R2, let $\gamma\in \O_{K_{\tilde\alpha}}$ be a traceless element such that $\{1,\gamma\}$ is an $\O_{K_{\tilde\beta}}$-basis of $\O_{K_{\tilde\alpha}}$, which exists by Remark \ref{tracelessexistence}.  Then \ref{root-LieAlg} sends
\begin{align*}
\log\left(u_\alpha\left(x,\frac12x^2\right)\right)&\mapsto \sum_{\sigma\in \Theta/\Theta_{\tilde\beta}} \sigma(x\cdot X_{\tilde\alpha}),\\
\log\left(u_\alpha\left(y\gamma,-\frac12y^2\gamma^2\right)\right)&\mapsto \sum_{\sigma\in \Theta/\Theta_{\tilde\beta}} \sigma(y\gamma\cdot X_{\tilde\alpha}),\\
\log\left(u_\alpha\left(0,z\gamma\right)\right)&\mapsto \sum_{\sigma\in \Theta/\Theta_{\tilde\beta}} \sigma(z\gamma\cdot X_{\tilde\beta}),
\end{align*}
where $x,y,z\in K_{\tilde\beta}$ satisfy
$$v(x)\geq r-\langle \alpha,v\rangle,\quad v(y)\geq r-\langle \alpha, v\rangle-v(\gamma),\quad v(z)\geq r-2\langle \alpha, v\rangle -v(\gamma).$$
\end{itemize}
\end{rem}

\subsection{The Lie algebra for more general reductive groups}

We now take the same setup as in Subsection \ref{general-case}.  That is, $p>2$, $G$ is any connected reductive $K$-group for $K/\Q_p$ a finite extension, and we assume that $G$ has a weakly induced maximally $K$-split maximal $K$-torus.  Fix any $x\in \mc B(G)$.  By \cite[Proposition 9.8.3]{KP}, there is a group scheme $\mc G_{x,r}$ of $G$ over $\O_K$ satisfying $G_{x,r}=\mc G_{x,r}(\O_K)$, and we write $\mf g_{x,r}=\Lie(\mc G_{x,r})(\O_K)$ for its Lie $\O_K$-algebra as in \cite[Section 13.3]{KP}.  We then have the following generalization of Proposition \ref{qsplit-LieAlg-iso}.

\begin{thm}\label{general-LieAlg-iso}
There is a natural isomorphism of Lie $\Z_p$-algebras
$$L_\omega(G_{x,\frac1{p-1}+})_{\frac1{p-1}+}\iso \mf g_{x,\frac1{p-1}+}$$
which respects the filtrations; that is, it induces for all $r>\frac1{p-1}$ an isomorphism
$$L_\omega(G_{x,\frac1{p-1}+})_r\iso \mf g_{x,r}.$$
\end{thm}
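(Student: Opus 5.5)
The plan is to deduce Theorem \ref{general-LieAlg-iso} from the quasisplit case, Proposition \ref{qsplit-LieAlg-iso}, by unramified descent. This mirrors how Theorem \ref{general-existence} was deduced from Proposition \ref{existence}.

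First, by Steinberg's theorem, fix a finite unramified Galois extension $L/K$ such that $G_L$ is quasisplit, and set $\Gamma\coloneqq\Gal(L/K)$. The group $G_L$ still has a weakly induced maximally split maximal torus: it contains the base change of $T$, and since $L/K$ is unramified the wild inertia is unchanged. After possibly enlarging $L$, I expect this to let us apply the quasisplit results to $G_L$.

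By \cite[Proposition 9.8.3]{KP} we have $G_{x,r}=(G_L)_{x,r}^\Gamma$ for all $r>0$. On the Lie algebra side, the integral model $\mc G_{x,r}$ is constructed by étale descent, so that $\mc G_{x,r}\otimes_{\O_K}\O_L$ is the corresponding model for $G_L$. Since $\O_L/\O_K$ is étale, the formation of $\Lie$ commutes with this base change, which gives
$$\mf g_{x,r}=\big((\mf g_L)_{x,r}\big)^\Gamma\subseteq \Lie(G)(L)^\Gamma=\Lie(G)(K).$$

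Second, I will check that every map in the diagram preceding Lemma \ref{LieAlg-cases}, formed for $G_L$, is $\Gamma$-equivariant:
\begin{itemize}
\item $\log$ on $(G_L)_{x,\frac1{p-1}+}$ is natural for automorphisms of the saturated group. This follows from the limit formulas of \cite[III.2.1.10]{lazard65}, and $\Gamma$ preserves $\omega_x$ because it preserves the Moy--Prasad filtration.
\item The map $\Phi_{G_L}$ of Proposition \ref{explicit-iso} commutes with the semilinear Galois action, since it is simply restriction of derivations along $A\otimes_K L\into C\an(G(L),L)$.
\end{itemize}
Here $\Gamma$ acts on the Lie algebras through its action on $G(L)$, which is $\Q_p$-analytic. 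The quasisplit isomorphism $\log((G_L)_{x,r})\iso (\mf g_L)_{x,r}$ inside $\Lie(G)(L)$ is therefore $\Gamma$-equivariant, and we take $\Gamma$-invariants, exactly as in the proof of Claim \ref{genTor-LieAlg}. The right side becomes $\mf g_{x,r}$ by the first step.

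For the left side, I need $\log((G_L)_{x,r})^\Gamma=\log((G_L)_{x,r}^\Gamma)=\log(G_{x,r})$. This holds because $\log$ is an equivariant bijection. I also need that $L_\omega(G_{x,\frac1{p-1}+})$ identifies with the $\Gamma$-invariants of $L_\omega((G_L)_{x,\frac1{p-1}+})$, compatibly with $\Lie(G(K))=\Lie(G(L))^\Gamma$. This should follow from functoriality of $L_\omega$ for the inclusion $G_{x,\frac1{p-1}+}\into (G_L)_{x,\frac1{p-1}+}$, which preserves $\omega$ and hence induces an injective map of filtered Lie algebras, together with the fact that both sides are $\Q_p$-spans of their $\log$-lattices.

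Finally, brackets are preserved because all maps are Lie homomorphisms. Filtrations are preserved since the statement holds levelwise for every $r>\frac1{p-1}$, and taking the union over $r$ gives the isomorphism on $\mf g_{x,\frac1{p-1}+}$.

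The main obstacle is the first step on the Lie algebra side. I must verify carefully that the \cite{KP} models $\mc G_{x,r}$ for the non-quasisplit $G$ are obtained by étale descent of those of $G_L$, so that $\Lie$ commutes with the base change $\O_K\to\O_L$ and with taking invariants. I would first try to extract this from the construction in \cite[Section 9.8]{KP}. Failing that, I would argue via \cite[Corollary 2.10.11]{KP}: a smooth model is determined by its $\O_{\breve K}$-points, and $\Lie$ of a smooth affine model is compatible with étale base change and Galois descent.
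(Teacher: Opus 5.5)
Your proposal is correct and is essentially the paper's proof: pass to a finite unramified $L/K$ with $G_L$ quasisplit, apply Proposition \ref{qsplit-LieAlg-iso} for each $r>\frac1{p-1}$, and take invariants under $\Gamma=\Gal(L/K)$. The only differences are in the justifications. The paper reads off $\Gamma$-equivariance from the explicit Iwahori-factor formulas in Remarks \ref{cochars} and \ref{qsplit-expl-desc}, whereas you obtain it from the naturality of $\log$, of $L_\omega\iso\Lie$, and of $\Phi_G$, which avoids tracking how $\Gamma$ permutes root groups and rescales the Chevalley--Steinberg system. You also spell out the identifications $\mf g_{x,r}=\big((\mf g_L)_{x,r}\big)^{\Gamma}$ and $L_\omega(G_{x,\frac1{p-1}+})=L_\omega((G_L)_{x,\frac1{p-1}+})^{\Gamma}$, which the paper simply asserts.
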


\begin{proof}
As in the proof of Theorem \ref{general-existence}, let $L/K$ be a finite unramified (hence Galois) extension such that $G_L$ is a quasisplit reductive $L$-group.  Then Proposition \ref{qsplit-LieAlg-iso} provides us with compatible isomorphisms
\begin{equation}\label{basechange}
\log(G(L)_{x,r})\xto{\sim} \mf g(L)_{x,r}
\end{equation}
of Lie $\Z_p$-algebras for any $r>\frac1{p-1}$.  Recall that, as an isomorphism of $\Z_p$-modules, this is a direct sum of isomorphisms defined on each Iwahori factor; moreover, we have concrete descriptions of these maps on the Iwahori factors given by Remarks \ref{cochars} and \ref{qsplit-expl-desc}.  One can immediately deduce from these descriptions that the map \ref{basechange} is in fact $\Gal(L/K)$-equivariant, from which we find an isomorphism of Lie $\Z_p$-algebras
$$\log\left(G(L)_{x,r}^{\Gal(L/K)}\right)\xto{\sim} \mf g(L)_{x,r}^{\Gal(L/K)},$$
which can be rewritten as $\log(G_{x,r})\xto{\sim} \mf g_{x,r}$.  Because these isomorphisms are compatible, it follows that we have an isomorphism $\log(G_{x,\frac1{p-1}+})\xto{\sim} \mf g_{x,\frac1{p-1}+}$ which respects the filtrations.
\end{proof}

From this result, we are also able to compute the Lie $\F_p[\eps]$-algebra $\gr G_{x,\frac1{p-1}+}$.  This is a more conceptual generalization of \cite[Theorem 3.4]{dottolehung}, which was a result that only applied to the split case where $p-1>he$ (see Remark \ref{splitcase} above); in such a case, $G_{x,\frac1{p-1}+}=G_{x,0+}$ is $I_1$ in their notation.  In fact, we are able to obtain information about the classical Moy-Prasad isomorphism (see \cite[Section 13.5]{KP}).

Recall from Proposition \ref{log-exp} that $\gr L_\omega(G_{x,\frac1{p-1}+})_{\frac1{p-1}+}$, as a Lie algebra over $\gr\Z_p\iso \F_p[\eps]$, is isomorphic to $\gr G_{x,\frac1{p-1}+}$.

\begin{cor} \label{MP-cor}
The Moy-Prasad isomorphisms
$$\mathrm{MP}_{x,r,r+}: G_{x,r}/G_{x,r+}\xto{\sim} \mf g_{x,r}/\mf g_{x,r+}$$
for $r>\frac1{p-1}$ fit together to form an isomorphism of Lie $\F_p[\eps]$-algebras
\begin{equation}\label{MP}\mathrm{MP}: \gr G_{x,\frac1{p-1}+}\xto{\sim} \gr \mf g_{x,\frac1{p-1}+}.\end{equation}
\end{cor}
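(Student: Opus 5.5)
The plan is to obtain $\mathrm{MP}$ as the composite of two isomorphisms that are already available: the isomorphism $\gr\log:\gr G_{x,\frac1{p-1}+}\xto{\sim}\gr L_\omega(G_{x,\frac1{p-1}+})_{\frac1{p-1}+}$ from Proposition \ref{log-exp}, and the isomorphism induced on associated graded objects by the filtered isomorphism $L_\omega(G_{x,\frac1{p-1}+})_{\frac1{p-1}+}\iso\mf g_{x,\frac1{p-1}+}$ of Theorem \ref{general-LieAlg-iso}. The second map takes $L_\omega(\cdot)_r$ onto $\mf g_{x,r}$ for every $r>\frac1{p-1}$, and hence also takes $L_\omega(\cdot)_{r+}$ onto $\mf g_{x,r+}$. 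It is $\Z_p$-linear and preserves brackets, so on graded pieces it gives an isomorphism of graded Lie algebras over $\gr\Z_p\iso\F_p[\eps]$; in particular it is compatible with $\eps$, which is multiplication by $p$ and raises degree by one. Composing the two, we get an isomorphism of Lie $\F_p[\eps]$-algebras $\gr G_{x,\frac1{p-1}+}\to\gr\mf g_{x,\frac1{p-1}+}$, and it respects the grading.

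What remains is to show that in each degree $r>\frac1{p-1}$ this composite is exactly $\mathrm{MP}_{x,r,r+}$. On $G_{x,r}/G_{x,r+}$ the composite is $g\mapsto \Psi(\log g)\bmod \mf g_{x,r+}$, where $\Psi$ is the isomorphism of Theorem \ref{general-LieAlg-iso}. Both this map and $\mathrm{MP}_{x,r,r+}$ are group isomorphisms. Both are also compatible with the Iwahori decomposition \ref{IwaDecomp} and the lattice decomposition \ref{OK-lattices}: for the Moy-Prasad isomorphism this is part of its construction in \cite[Section 13.5]{KP}, and for $\Psi$ it holds by construction. By unramified descent, exactly as in the proof of Theorem \ref{general-LieAlg-iso}, the maps are $\Gal(L/K)$-equivariant and the Moy-Prasad isomorphisms are compatible with the base change to $L$, so it suffices to treat the quasisplit case. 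There we compare the two maps factor by factor.

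\textbf{Torus factor.} By Remark \ref{cochars}, after passing to a splitting field the map is $X_*(T)\otimes(1+\mf m^{n})\to X_*(T)\otimes\mf m^{n}$ induced by $\log$. Since $\log(1+y)\equiv y$ modulo higher filtration, this agrees mod $r+$ with $t\mapsto t-1$, which is the Moy-Prasad map on tori.

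\textbf{Root factors.} By Remark \ref{qsplit-expl-desc}, for type R1 the element $\log u_\alpha(x)$ is sent to $\sum_\sigma\sigma(xX_{\tilde\alpha})$. This is precisely $\mathrm{MP}(u_\alpha(x))$, since the Moy-Prasad map sends $u_\alpha(x)$ to the class of $du_\alpha(x)$. For type R2 we use the ordered basis of Corollary \ref{basis}. The three generators $u_\alpha(x,\frac12x^2)$, $u_\alpha(y\gamma,-\frac12y^2\gamma^2)$ and $u_\alpha(0,z\gamma)$ have Moy-Prasad images the classes of $\sum\sigma(xX_{\tilde\alpha})$, $\sum\sigma(y\gamma X_{\tilde\alpha})$ and $\sum\sigma(z\gamma X_{\tilde\beta})$ respectively. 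This is because the quadratic term $y$-coordinate lies in strictly higher filtration, which is where the compatibility statement of Proposition \ref{compatibility} enters. These classes match the formulas in Remark \ref{qsplit-expl-desc}, so the two maps agree on generators and hence, being additive, on the whole factor.

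I expect the main obstacle to be pinning down the precise normalization of $\mathrm{MP}_{x,r,r+}$ in \cite[Theorem 13.5.1]{KP} well enough to make this factorwise comparison rigorous. The delicate cases are the R2 root groups, where one must check that the $y$-component of $u_\alpha(x,\frac12x^2)$ genuinely contributes nothing modulo $\mf g_{x,r+}$, and non-split tori, where one must check compatibility of $\mathrm{MP}$ with Galois descent of the standard filtration.
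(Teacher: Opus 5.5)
Your proposal is correct and follows essentially the same route as the paper: compose $\gr\log$ from Proposition \ref{log-exp} with the graded version of the filtered isomorphism of Theorem \ref{general-LieAlg-iso}, reduce to the quasisplit case by unramified descent, and then check agreement with $\mathrm{MP}$ on each Iwahori factor, using Remark \ref{cochars} for the torus and Remark \ref{qsplit-expl-desc} for the root groups. The normalization issue you flag is the same one the paper faces; it settles it by appealing to the proof of \cite[Proposition B.6.9]{KP} for tori and to the functoriality in \cite[Proposition A.5.19(3)]{KP} for root groups, so your explicit factorwise checks (including the R2 case via the three subgroups of Proposition \ref{compatibility}) are somewhat more detailed than what the paper writes.
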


To be precise, we view $\gr \mf g_{x,\frac1{p-1}+}$ as a Lie algebra over $\F_p[\eps]$ by first viewing $\mf g_{x,\frac1{p-1}+}$ as a Lie algebra over $\Z_p$ (it is a priori over $\O_K$), then take graded parts to arrive at an action of $\F_p[\eps]\iso \gr\Z_p$.  Thus, $\eps$ is a degree-one map which sends any $g+\mf g_{x,r}$ to $pg+\mf g_{x,r+1}$.

\begin{proof}
Note first that these Moy-Prasad isomorphisms exist by \cite[Theorem 13.5.1]{KP}, as our condition on the torus ensures that their condition (1) is satisfied.

It immediately follows from Theorem \ref{general-LieAlg-iso} and Proposition \ref{log-exp} that we have an isomorphism of Lie $\F_p[\eps]$-algebras 
$$\gr G_{x,\frac1{p-1}+}\xto{\gr\log} \gr L_\omega(G_{x,\frac1{p-1}+})_{\frac1{p-1}+}\xto{\sim} \gr \mf g_{x,\frac1{p-1}+},$$
where the second isomorphism is the one arising from Theorem \ref{general-LieAlg-iso} by taking graded parts.  All that remains to be proven is that this coincides with \ref{MP}.  Because both our construction and the one in \cite{KP} arise by first passing to a sufficiently large unramified extension $L/K$ such that $G_L$ is quasisplit and then taking Galois invariants, we may suppose without loss of generality that $G$ is quasisplit.  Similarly, both constructions proceed by building the isomorphisms on each Iwahori factor and then taking direct sums, so it is sufficient to show that our maps are equal on each Iwahori factor.  For the torus, they agree by Remark \ref{cochars} and the proof of \cite[Proposition B.6.9]{KP}.  For the root groups, they agree by the functoriality in \cite[Proposition A.5.19(3)]{KP} and Remark \ref{qsplit-expl-desc}.
\end{proof}

\newpage
\bibliographystyle{amsalpha}
\bibliography{pval}
\end{document}